\DeclareMathOperator{\Ad}{Ad}
\DeclareMathOperator{\Ric}{Ric}
\newcommand{\fr}{\mathfrak}
\newcommand{\bb}{\mathbb}
\DeclareMathOperator{\SO}{SO}
\DeclareMathOperator{\Sp}{Sp}
\DeclareMathOperator{\SU}{SU}
 \newtheorem{lemma} {Lemma} [section]
\newtheorem{theorem}[lemma]{Theorem} 
\newtheorem{prop} [lemma]{Proposition}
\begin{document}

\title{New Einstein metrics on the Lie group $\SO (n)$ which are not naturally reductive} 
\author{Andreas Arvanitoyeorgos, Yusuke Sakane, and Marina Statha}
\address{University of Patras, Department of Mathematics, GR-26500 Patras, Greece}
\email{arvanito@math.upatras.gr}
 \address{Osaka University, Department of Pure and Applied Mathematics, Graduate School of Information Science and Technology, Toyonaka, 
Osaka 560-0043, Japan}
 \email{sakane@math.sci.osaka-u.ac.jp}
\address{University of Patras, Department of Mathematics, GR-26500 Patras, Greece}
\email{statha@master.math.upatras.gr} 
\medskip

   \begin{abstract}
We obtain new invariant Einstein metrics  on the compact Lie groups
 $\SO(n)$ ($n \geq 7$) which are not naturally reductive.  This is achieved by  imposing certain symmetry assumptions in the set of all 
 left-invariant metrics on $\SO(n)$ and by computing the Ricci tensor for such metrics.  The Einstein metrics are obtained as solutions of systems polynomial equations, which we manipulate by symbolic computations  using Gr\"obner bases.
  
  \medskip
\noindent 2010 {\it Mathematics Subject Classification.} Primary 53C25; Secondary 53C30, 22C05, 22E60, 13P05, 13P10, 13P15

\medskip
\noindent {\it Keywords}:    Homogeneous space; Einstein metric; isotropy representation; compact simple Lie group; naturally reductive metric; generalized Wallach space; algebraic system of equations; 
Gr\"obner basis
   \end{abstract}

\maketitle


 \section{Introduction}
\markboth{Andreas Arvanitoyeorgos, Yusuke Sakane and Marina Statha}{Einstein metrics on $\SO (n)$ which are not naturally reductive}

A Riemannian manifold $(M, g)$ is called Einstein if it has constant Ricci curvature, i.e. $\Ric_{g}=\lambda\cdot g$ for some $\lambda\in\bb{R}$. 
  For results on Einstein manifolds before 1987 we refer to the book by A. Besse \cite{Be}. 
  The two  articles \cite{W1}, \cite{W2} of M. Wang  contain results up to 1999 and 2013 respectively.
  General existence results are difficult to obtain and some methods are described
  in \cite{Bom}, \cite{BWZ} and \cite{WZ}.  
For homogeneous spaces $G/K$ the problem is to find and classify all $G$-invariant Einstein metrics.
The problem is even more difficult for the case of a Lie group, where we need to find (or  prove existence) of left-invariant Einstein metrics.
Even for the compact Lie groups $\SU(3)$ and $\SU(2)\times\SU(2)$ the number of left-invariant Einstein metrics is still unknown. 

    In the present paper we investigate left-invariant Einstein metrics on the compact Lie group
    $\SO(n)$.
 It is known that a compact and semisimple Lie group equipped with a bi-invariant metric is Einstein.
In the work \cite{DZ}, J.E. D'Atri and W. Ziller  found a large number of left-invariant Einstein metrics, which are naturally reductive,  on the compact Lie groups
  $\SU(n), \SO(n)$ and $\Sp(n)$.
  In the same article they posed the question whether there exist left-invariant Einstein metrics on compact Lie groups which are not naturally reductive.
  
  Some contributions to this problem are the following:
  In \cite{M} K. Mori obtained non naturally reductive Einstein metrics on the Lie group $\SU(n)$ for $n\ge 6$, and
  in \cite{AMS} the first two  authors and K. Mori  proved existence of non naturally reductive Einstein metrics on the compact Lie groups $\SO(n)$ $(n\ge 11)$, $\Sp(n)$ $(n\ge 3)$, $E_6, E_7$ and $E_8$. 
   In \cite{CL} Z. Chen and K. Liang found three naturally reductive and one non naturally reductive Einstein metric on the compact Lie group $F_4$.  Also, in \cite{ASS2} the authors  obtained new left-invariant Einstein metrics on the symplectic group $\Sp(n)\ (n\ge 3)$, and in
   \cite{CS} I. Chrysikos and the second author obtained non naturally reductive Einstein metrics on exceptional Lie groups.
   We also mention the works  \cite{GLP}, \cite{Po}  by G.W. Gibbons, H. L\"u and C. N. Pope  where they discussed left-invariant Einstein metrics on the Lie groups  $\SO(n)$,  $G_2$ and $\SU(3)$ which are however naturally reductive,
  as well as  \cite{Mu}  by A. H. Mujtaba, who obtained certain classes of left-invariant metrics on 
  $\SU(n)$, which were
  previously found in \cite{J2} and \cite{DZ}.
  
  The aim of the present work is to obtain left-invariant Einstein metrics on the compact Lie groups $\SO(n)$ $(n\ge 7)$ which are not naturally reductive. The Einstein metrics obtained here are different from the ones obtained in \cite{AMS}.  The idea behind our approach is to consider an appropriate subgroup $K$ and a corresponding homogeneous space $G/K$ whose isotropy representation decomposes into $\Ad(K)$-irreducible and non equivalent summands.  Then the tangent space $\fr{g}$ of $G$ decomposes, via the submersion $G\to G/K$ with fiber $K$, into a direct sum of non equivalent $\Ad(K)$-modules.
  By taking into account the diffeomorphism 
  $G/\{e\}\cong (G\times K)/\mbox{diag}(K)$ we consider left-invariant metrics on $G$ which are determined by diagonal $\Ad(K)$-invariant scalar products on $\fr{g}$, which in turn enables us to use well known formulas for the Ricci curvature 
  (e.g. \cite[Corollary 7.38, p. 184]{Be}, \cite[Lemma 1.1, p. 52]{PS}).
  
More precisely, for the group $\SO(n)$ we write  $n=k_1+k_2+k_3$,  ($k_1, k_2, k_3$  positive integers),  and we
   consider the closed subgroup $K=\SO(k_1)\times\SO(k_2)\times\SO(k_3)$.  This determines the homogeneous space $G/K = \SO(k_1+k_2+k_3)/(\SO(k_1)\times\SO(k_2)\times\SO(k_3))$, which is an example of a {\it generalized Wallach space} according to \cite{NRS}.  For $k_1\ge k_2\ge k_3\ge 2$ with $k_i\ne k_j$ the isotropy representation 
   $\fr{m}=\fr{m}_{12}\oplus\fr{m}_{13}\oplus\fr{m}_{23}$ 
   of $G/K$ does not contain equivalent summands, thus we consider left-invariant metrics determined by  $\Ad(\SO(k_1)\times\SO(k_2)\times\SO(k_3))$-invariant inner products of the form
$$
\begin{array}{lll}
 \langle \  ,\  \rangle &=&  x_1 \, (-B) |_{\fr{so}(k_1)}+ x_2 \, (-B) |_{ \fr{so}(k_2)}+  x_3 \, (-B) |_{ \fr{so}(k_3)} 
 \\ &  & + x_{12} \,  (-B) |_{ \fr{m}_{12}}+ x_{13} \,  (-B) |_{ \fr{m}_{13}} + x_{23} \,  (-B) |_{ \fr{m}_{23}} 
 \end{array},\quad x_i, x_{ij} >0.
 $$
 If   $k_3=1$, $k_1\ge k_2\ge 2$ we omit the variable $x_3$,
 and if   $k_2=k_3=1$, $k_1\ge 3$ we omit the variables $x_2$ and $x_3$.
 In the last case the submodules $\fr{m}_{12}$ and $\fr{m}_{13}$ are equivalent and we treat this separately in Section 7.
  
 By using the main theorem of D'Atri and Ziller (\cite{DZ}) we obtain conditions on the positive variables $x_1, x_2, x_3, x_{12}, x_{13}$ and $x_{23}$ so that
 the above metric is naturally reductive.
 For all possible partitions of $n=k_1+k_2+k_3$ we write the components of the Ricci tensor and then  use methods of symbolic computation to prove existence of positive solutions to systems of algebraic equations obtained by the Einstein equation.
For the case of the Lie groups $\SO(5)$ and  $\SO(6)$ our method gives only naturally reductive metrics.
  
  \smallskip
  \noindent
The main result  is the following:

\begin{theorem}\label{main}  The compact simple Lie groups $\SO(n)$ $(n \geq 7) $  admit left-invariant Einstein metrics which are not naturally reductive. 
\end{theorem}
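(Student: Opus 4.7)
The plan is to establish the theorem case by case according to partitions $n = k_1 + k_2 + k_3$ with $k_1 \geq k_2 \geq k_3 \geq 1$, constructing for each $n \geq 7$ at least one partition that yields a non-naturally-reductive Einstein metric. The starting point is the homogeneous fibration $\SO(n) \to \SO(n)/K$ with $K = \SO(k_1) \times \SO(k_2) \times \SO(k_3)$, which realizes $G/K$ as a generalized Wallach space. Via the diffeomorphism $G/\{e\} \cong (G \times K)/\diag(K)$, left-invariant metrics on $\SO(n)$ that are $\Ad(K)$-invariant are parametrized by diagonal inner products of the form displayed in the introduction, with positive parameters $x_1, x_2, x_3, x_{12}, x_{13}, x_{23}$ (omitting variables as noted when some $k_i=1$).

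First I would compute the components of the Ricci tensor $r_1, r_2, r_3, r_{12}, r_{13}, r_{23}$ with respect to the $B$-orthonormal splitting $\fr{g} = \fr{so}(k_1) \oplus \fr{so}(k_2) \oplus \fr{so}(k_3) \oplus \fr{m}_{12} \oplus \fr{m}_{13} \oplus \fr{m}_{23}$, using the standard formula for the Ricci tensor of a left-invariant metric (as in \cite[Corollary 7.38]{Be} and \cite[Lemma 1.1]{PS}). Since the submodules are pairwise inequivalent under the assumption $k_i \neq k_j$, each $r_i$ and $r_{ij}$ is a rational function of the $x$-variables whose structure constants are determined by the known triple $[k_i k_j k_\ell]$ numbers for the generalized Wallach space; the cases where some $k_i=1$ collapse the corresponding $\fr{so}(k_i)$ summand and simplify the expressions accordingly.

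Next, normalizing $x_{23}=1$, the Einstein condition $r_1 = r_2 = r_3 = r_{12} = r_{13} = r_{23}$ reduces to a system of polynomial equations in the remaining variables, and I would clear denominators to obtain an ideal $I$ in $\bb{Q}[x_1, x_2, x_3, x_{12}, x_{13}]$. The key step is to use a symbolic algebra system to compute a Gröbner basis of $I$ with respect to the lexicographic order, which triangularizes the system and produces a single-variable polynomial whose real positive roots correspond to candidate Einstein metrics; back-substitution then yields the values of the other variables and one checks positivity. I expect this Gröbner-basis computation to be the main obstacle: the polynomials can have high degree and large integer coefficients, so the feasibility of the computation, and the need to isolate genuinely positive real roots by Sturm sequences or numerical approximation with a posteriori verification, is the delicate part. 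To cover all $n \geq 7$ I would select, for each residue class modulo $3$, a uniform family of partitions (e.g.\ $(n-2,1,1)$, or $(k_1,k_2,k_3)$ with two of them small and the third growing) so that the resulting polynomial systems share a common structure and one analysis handles infinitely many $n$ at once.

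Finally, for each candidate metric I would apply the D'Atri--Ziller criterion \cite{DZ}: the metric is naturally reductive with respect to some transitive subgroup of $\SO(n) \times \SO(n)$ precisely when certain explicit equalities among the $x$-variables hold, which I would list and then verify by inspection that the solutions produced by the Gröbner basis violate at least one of them. The small cases $n = 7, 8, 9, 10$, where the partition $k_1 + k_2 + k_3 = n$ leaves little room, need individual treatment; in particular the case $k_2 = k_3 = 1$ (i.e.\ $K = \SO(n-2)$) must be handled separately as indicated in the introduction, since there $\fr{m}_{12}$ and $\fr{m}_{13}$ are equivalent and the inner product has a different form. Combining all these individual verifications then gives the existence of non-naturally-reductive Einstein metrics on $\SO(n)$ for every $n \geq 7$.
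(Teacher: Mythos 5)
Your proposal follows essentially the same route as the paper: build $\Ad(K)$-invariant left-invariant metrics from the generalized Wallach decomposition with $K=\SO(k_1)\times\SO(k_2)\times\SO(k_3)$, compute the Ricci components via the Park--Sakane/Besse formula, reduce the Einstein equation to a polynomial system, triangularize by Gr\"obner bases, and rule out natural reductivity via the D'Atri--Ziller criterion. The paper's specific partitions are $(3,3,1)$ for $\SO(7)$, $(4,3,1)$ for $\SO(8)$, and the single uniform family $(n-6,3,3)$ for all $n\geq 9$, with the further ansatz $x_{12}=x_{13}=1$, $x_2=x_3$ to keep the Gr\"obner computation tractable with $n$ as a symbolic parameter; positivity of the solutions is then established by an explicit sign analysis of the coefficients of the eliminant polynomials (shifted to powers of $n-8$ or $n-9$), rather than Sturm sequences.

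One concrete caution about the example you float: the family $(n-2,1,1)$ cannot serve as your source of non-naturally-reductive metrics. The paper devotes Section 7 to exactly this case and shows that, even after confirming the Ricci tensor remains diagonal despite the equivalence of $\fr{m}_{12}$ and $\fr{m}_{13}$, the Einstein system produces \emph{only} naturally reductive solutions for every $n\geq 5$. So if you organized the argument around $(n-2,1,1)$ your plan would stall; you need a partition with at least two of the $k_i$ equal to $2$ or more, and in fact you need the $\fr{m}_{ij}$ to be pairwise inequivalent so that Lemma 2.1 applies without a supplementary off-diagonal check. The paper's choice $(n-6,3,3)$ for $n\geq 9$ satisfies this (provided $n\neq 12$; the case $k_1=k_2=k_3=3$ still has inequivalent $\fr{m}_{ij}$ because the three $\SO(3)$ factors act on different pairs of indices) and leaves $\SO(7)$, $\SO(8)$ to be handled by the ad hoc partitions $(3,3,1)$ and $(4,3,1)$. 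Apart from the choice of partitions and the device used to control positivity of roots, your outline matches the paper's proof.
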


The paper is organized as follows: In Section 2 we recall a formula for the Ricci tensor of homogeneous spaces given in \cite{PS}.  In Section 3 we describe the left-invariant metrics which will be considered in this work, and in Section 4 we give conditions under which such left-invariant metrics on $\SO(n)$ are naturally reductive with respect to $\SO(n)\times L$, for some closed subgroup $L$ of $\SO(n)$.
In Section 5 we obtain explicit formulas for the Ricci tensor of left-invariant metrics.  
In Section 6 we investigate the solutions of the Einstein equation by using Gr\"obner bases.
Then  Theorem \ref{main} follows from Propositions 
\ref{prop1}, \ref{prop2} and \ref{prop3}.  Finally,
in Section 7 we prove that the compact Lie groups $\SO(n)$  admit only naturally reductive $\Ad(\SO(n-2))$-invariant Einstein metrics  (here $k_1=n-2, k_2=k_3=1$).
This case is of special interest, because the decomposition  $\fr{so}(n)=\fr{so}(k_1)\oplus \fr{m}_{12}\oplus\fr{m}_{13}\oplus\fr{m}_{23}$  of the tangent space of $\SO(n)$
contains equivalent
$\Ad(\SO(n-2))$-modules.  Hence, we need to confirm that for the
 $\Ad(\SO(n-2))$-invariant metrics under consideration
the Ricci tensor is diagonal.

\medskip
{\bf Acknowledgements.} 
The work was supported by Grant $\# E.037$ from the Research Committee of the University of Patras
(Programme K. Karatheodori) and JSPS KAKENHI Grant Number 25400071.
  It was completed while the first author was on sabbatical leave at Tufts University, University of Athens and Osaka University during 2014-15.

\section{The Ricci tensor for reductive homogeneous spaces}
In this section we recall an expression for the Ricci tensor for an $G$-invariant Riemannian
metric on a reductive homogeneous space whose isotropy representation
is decomposed into a sum of non equivalent irreducible summands.

Let $G$ be a compact semisimple Lie group, $K$ a connected closed subgroup of $G$  and  
let  $\frak g$ and $\fr{k}$  be  the corresponding Lie algebras. 
The Killing form $B$ of $\frak g$ is negative definite, so we can define an $\mbox{Ad}(G)$-invariant inner product $-B$ on 
  $\frak g$. 
Let $\frak g$ = $\frak k \oplus
\frak m$ be a reductive decomposition of $\frak g$ with respect to $-B$ so that $\left[\,\frak k,\, \frak m\,\right] \subset \frak m$ and
$\frak m\cong T_o(G/K)$.
 We assume that $ {\frak m} $ admits a decomposition into mutually non equivalent irreducible $\mbox{Ad}(K)$-modules as follows: \ 
\begin{equation}\label{iso}
{\frak m} = {\frak m}_1 \oplus \cdots \oplus {\frak m}_q.
\end{equation} 
Then any $G$-invariant metric on $G/K$ can be expressed as  
\begin{eqnarray}
 \langle\  , \  \rangle  =  
x_1   (-B)|_{\mbox{\footnotesize$ \frak m$}_1} + \cdots + 
 x_q   (-B)|_{\mbox{\footnotesize$ \frak m$}_q},  \label{eq2}
\end{eqnarray}
for positive real numbers $(x_1, \dots, x_q)\in\bb{R}^{q}_{+}$.  Note that  $G$-invariant symmetric covariant 2-tensors on $G/K$ are 
of the same form as the Riemannian metrics (although they  are not necessarilly  positive definite).  
 In particular, the Ricci tensor $r$ of a $G$-invariant Riemannian metric on $G/K$ is of the same form as (\ref{eq2}), that is 
 \[
 r=y_1 (-B)|_{\mbox{\footnotesize$ \frak m$}_1}  + \cdots + y_{q} (-B)|_{\mbox{\footnotesize$ \frak m$}_q} ,
 \]
 for some real numbers $y_1, \ldots, y_q$.

Let $\lbrace e_{\alpha} \rbrace$ be a $(-B)$-orthonormal basis 
adapted to the decomposition of $\frak m$,    i.e. 
$e_{\alpha} \in {\frak m}_i$ for some $i$, and
$\alpha < \beta$ if $i<j$. 
We put ${A^\gamma_{\alpha
\beta}}= -B \left(\left[e_{\alpha},e_{\beta}\right],e_{\gamma}\right)$ so that
$\left[e_{\alpha},e_{\beta}\right]
= \displaystyle{\sum_{\gamma}
A^\gamma_{\alpha \beta} e_{\gamma}}$ and set 
$\displaystyle{k \brack {ij}}=\sum (A^\gamma_{\alpha \beta})^2$, where the sum is
taken over all indices $\alpha, \beta, \gamma$ with $e_\alpha \in
{\frak m}_i,\ e_\beta \in {\frak m}_j,\ e_\gamma \in {\frak m}_k$ (cf. \cite{WZ}).  
Then the positive numbers $\displaystyle{k \brack {ij}}$ are independent of the 
$B$-orthonormal bases chosen for ${\frak m}_i, {\frak m}_j, {\frak m}_k$,
and 
$\displaystyle{k \brack {ij}}\ =\ \displaystyle{k \brack {ji}}\ =\ \displaystyle{j \brack {ki}}.  
 \label{eq3}
$

Let $ d_k= \dim{\frak m}_{k}$. Then we have the following:

\begin{lemma}\label{ric2}\textnormal{(\cite{PS})}
The components ${ r}_{1}, \dots, {r}_{q}$ 
of the Ricci tensor ${r}$ of the metric $ \langle  \,\,\, , \,\,\, \rangle $ of the
form {\em (\ref{eq2})} on $G/K$ are given by 
\begin{equation}
{r}_k = \frac{1}{2x_k}+\frac{1}{4d_k}\sum_{j,i}
\frac{x_k}{x_j x_i} {k \brack {ji}}
-\frac{1}{2d_k}\sum_{j,i}\frac{x_j}{x_k x_i} {j \brack {ki}}
 \quad (k= 1,\ \dots, q),    \label{eq51}
\end{equation}
where the sum is taken over $i, j =1,\dots, q$.
\end{lemma}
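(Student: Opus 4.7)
My plan is to derive \eqref{eq51} from the general formula for the Ricci tensor of a $G$-invariant metric on a reductive homogeneous space (\cite[Corollary 7.38]{Be}), specialized to the diagonal metric \eqref{eq2}. The first step is to establish the diagonal form of $r$ by Schur's lemma: both $-B$ and $r$ are $\Ad(K)$-invariant symmetric bilinear forms on $\fr m$, and since the irreducible summands $\fr m_1,\ldots,\fr m_q$ are pairwise inequivalent, any such form decomposes as $\sum_k y_k (-B)|_{\fr m_k}$ for scalars $y_k$. By the convention of the lemma the ``component'' $r_k$ is the value $r(Z,Z)$ taken on any $\langle\,,\,\rangle$-unit vector $Z\in\fr m_k$, i.e.\ $r_k=y_k/x_k$. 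Setting $Z_\alpha=e_\alpha/\sqrt{x_{i(\alpha)}}$, where $e_\alpha\in\fr m_{i(\alpha)}$, yields the trace expression
\[
 r_k \;=\; \frac{1}{d_k}\sum_{e_\alpha\in\fr m_k} r(Z_\alpha,Z_\alpha).
\]

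The second step is to apply Besse's formula, in which $r(X,X)$ is written as the sum of a squared-bracket term $-\tfrac12\sum_\sigma|[X,Z_\sigma]_{\fr m}|^2_{\langle,\rangle}$, the Killing-form term $-\tfrac12 B(X,X)$, and a quadratic term $\tfrac14\sum_{\sigma,\tau}\langle [Z_\sigma,Z_\tau]_{\fr m},X\rangle^2$, with $\{Z_\sigma\}$ a $\langle\,,\,\rangle$-orthonormal basis of $\fr m$. Substituting $X=Z_\alpha$ for $e_\alpha\in\fr m_k$, expanding $Z_\sigma=e_\sigma/\sqrt{x_{i(\sigma)}}$, and rewriting every inner product via $\langle\cdot,\cdot\rangle=\sum_i x_i(-B)|_{\fr m_i}$, each of the three terms becomes a sum of the structure constants $A^\gamma_{\alpha\beta}=-B([e_\alpha,e_\beta],e_\gamma)$ weighted by a rational monomial in the $x_i$. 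Averaging over $e_\alpha\in\fr m_k$ and grouping the sums over $\sigma,\tau,\gamma$ by the subspace $\fr m_i$ each basis vector lies in, each term collapses into a sum over the triple symbols ${k\brack ij}$: the $B$-term yields $\tfrac{1}{2x_k}$, the quadratic term yields $\tfrac{1}{4d_k}\sum_{j,i}\tfrac{x_k}{x_j x_i}{k\brack ji}$, and the squared-bracket term yields $-\tfrac{1}{2d_k}\sum_{j,i}\tfrac{x_j}{x_k x_i}{j\brack ki}$.

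The derivation is essentially mechanical once Schur's lemma has pinned down the diagonal form. The main obstacle is the combinatorial bookkeeping: carefully tracking which powers of $x_i, x_j, x_k$ appear in the numerator versus the denominator when changing between the $-B$- and $\langle\,,\,\rangle$-orthonormal bases, and judiciously applying the symmetries ${k\brack ij}={k\brack ji}={j\brack ki}$ to bring the resulting triple sums into the form prescribed by \eqref{eq51}. Summing the three contributions then yields the stated expression.
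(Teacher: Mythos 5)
The paper itself gives no proof of this lemma: it is quoted directly from \cite{PS} and stated without derivation. Your reconstruction is correct and matches the standard argument: you correctly identify $r_k$ as $r$ evaluated on a $\langle\,,\,\rangle$-unit vector of $\fr m_k$ (equivalently $y_k/x_k$, consistent with the later normalization $r_k=\lambda$), apply the three-term Ricci formula from \cite[Corollary 7.38]{Be} to a $\langle\,,\,\rangle$-orthonormal basis $Z_\sigma=e_\sigma/\sqrt{x_{i(\sigma)}}$, and the rescaling bookkeeping produces exactly $\frac{1}{2x_k}$ from the $B$-term, $\frac{1}{4d_k}\sum_{j,i}\frac{x_k}{x_jx_i}{k\brack ji}$ from the quadratic term, and $-\frac{1}{2d_k}\sum_{j,i}\frac{x_j}{x_kx_i}{j\brack ki}$ from the squared-bracket term after averaging over $e_\alpha\in\fr m_k$. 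This is the same derivation carried out in the cited reference.
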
 
Since by assumption the submodules $\fr{m}_{i}, \fr{m}_{j}$ in the decomposition (\ref{iso}) are mutually non equivalent for any $i\neq j$, it is $r(\fr{m}_{i}, \fr{m}_{j})=0$ whenever $i\neq j$. 
If $\fr{m}_i\cong\fr{m}_j$ for some $i\ne j$ then we need to check whether 
$r(\fr{m}_{i}, \fr{m}_{j})=0$. This is not an easy task in general.
Once the condition $r(\fr{m}_{i}, \fr{m}_{j})=0$ is confirmed we can use
  Lemma \ref{ric2}.   Then    $G$-invariant Einstein metrics on $M=G/K$ are exactly the positive real solutions $g=(x_1, \ldots, x_q)\in\bb{R}^{q}_{+}$  of the  polynomial system $\{r_1=\lambda, \ r_2=\lambda, \ \ldots, \ r_{q}=\lambda\}$, where $\lambda\in \bb{R}_{+}$ is the Einstein constant.

\section{A class of left-invariant metrics on $\SO(n) = \SO(k_1+k_2+k_3)$}

We will describe a decomposition of the tangent space of the Lie group $\SO (n)$ which will be convenient for our study.
 We consider the closed subgroup $K = \SO(k_1)\times\SO(k_2)\times\SO(k_3)$ of $G = \SO(k_1 + k_2 + k_3)$
 ($k_1\ge k_2\ge k_3\ge 2$), where the embedding of $K$ in $G$ is diagonal,
  and  the fibration
$$
\SO(k_1)\times\SO(k_2)\times\SO(k_3) \to  \SO(k_1 + k_2 + k_3) \to  \SO(k_1 + k_2 + k_3)/(\SO(k_1)\times\SO(k_2)\times\SO(k_3)).
$$
The base space of the above fibration is an example of a {\it generalized Wallach space} (cf. \cite{NRS}). 
  Then the tangent space $\fr{so}(k_1 + k_2 + k_3)$ of the orthogonal group $G = \SO(k_1 + k_2 + k_3)$ can be written as a direct sum of two $\Ad(K)$-invariant modules, the horizontal space $\fr{m}\cong T_{o}(G/K)$ and the vertical space $\fr{so}(k_1)\oplus\fr{so}(k_2)\oplus\fr{so}(k_3)$, i.e.
\begin{equation}\label{diaspasi}
\fr{so}(k_1 + k_2 + k_3) = \fr{so}(k_1)\oplus\fr{so}(k_2)\oplus\fr{so}(k_3)\oplus\fr{m}.
\end{equation}
The tangent space $\fr{m}$ of $G/K$ is given by $\fr{k}^{\perp}$ in $ \fr{g} = \fr{so}(k_1+ k_2+k_3)$ with respect to  $-B$.
 If we denote by $M(p,q)$ the set of all $p \times q$ matrices, then we see that 
  $ \fr{m}$ is given by 
  \begin{equation}
\fr{m}=  \left\{\begin{pmatrix}
 0 & {A}_{12} & {A}_{13}\\
 -{}^{t}_{}\!{A}_{12} & 0 & {A}_{23}\\
 -{}^{t}_{}\!{A}_{13} & -{}^{t}_{}\!{A}_{23} & 0 
 \end{pmatrix} \  \Big\vert \  {A}_{12} \in M(k_1, k_2), {A}_{13} \in M(k_1, k_3), {A}_{23} \in M(k_2, k_3) \right\} 
 \end{equation}
and we have 
   \begin{equation}\label{modules}
 \fr{m}_{12}= \begin{pmatrix}
 0 & {A}_{12} & 0\\
 -{}^{t}_{}\!{A}_{12} & 0 &0\\
0  & 0 & 0 
 \end{pmatrix},  \quad  
 \fr{m}_{13}= \begin{pmatrix}
 0 & 0 &{A}_{13}\\
0 & 0 &0\\
 -{}^{t}_{}\!{A}_{13}   & 0 & 0 
 \end{pmatrix},  \quad  
 \fr{m}_{23}= \begin{pmatrix}
 0 & 0 & 0\\
0 & 0 &{A}_{23}\\
0  &  -{}^{t}_{}\!{A}_{23} & 0 
 \end{pmatrix}.  
 \end{equation}
 Note that the action of $\Ad(k)$ ($k \in K$) on $ \fr{m}$ is given by 
 \begin{equation}
\Ad(k) \begin{pmatrix}
 0 & {A}_{12} & {A}_{13}\\
 -{}^{t}_{}\!{A}_{12} & 0 & {A}_{23}\\
 -{}^{t}_{}\!{A}_{13} & -{}^{t}_{}\!{A}_{23} & 0 
 \end{pmatrix}  = 
  \begin{pmatrix}
 0 &{}^t h_1 {A}_{12} h_2 & {}^t h_1{A}_{13} h_3\\
 -{}^{t}_{}h_2 {}^{t}_{}\!{A}_{12}h_1 & 0 & {}^t h_2{A}_{23} h_3\\
 -{}^{t}_{}h_3{}^{t}_{}\!{A}_{13} h_1 & -{}^{t}_{}h_3{}^{t}_{}\!{A}_{23} h_2& 0 
 \end{pmatrix}, 
  \end{equation}
 where $ \begin{pmatrix}
 h_1 & 0& 0\\
0 & h_2 &0\\
0  & 0 & h_3 
 \end{pmatrix} \in K$.  
 The subspaces  $\fr{m}_{12}$,  $\fr{m}_{13}$ and  $\fr{m}_{23}$  are  
irreducible $\Ad(K)$-submodules whose dimensions are
$\dim\fr{m}_{12}=k_1k_2$,  $\dim\fr{m}_{13}=k_1k_3$ and $\dim\fr{m}_{23}=k_2k_3$.
They are given as $(-B)$-orthogonal complements of
$\fr{so}(k_i)\oplus\fr{so}(k_j)$ in $\fr{so}(k_i+k_j)$ ($1\le i<j\le 3$), respectively.

Note that the irreducible submodules $\fr{m}_{ij}$ are mutually non equivalent,
 so any $G$-invariant metric on the base space $G/K$ is determined by an $\Ad(K)$-invariant scalar product
$
x_{12} (-B) |_{ \fr{m}_{12}}$  $+ x_{13}(-B) |_{ \fr{m}_{13}}+ x_{23}(-B) |_{ \fr{m}_{23}}.
$
We also set $\fr{m}_1=\fr{so}(k_1), \fr{m}_2=\fr{so}(k_2)$ and $\fr{m}_3=\fr{so}(k_3)$.
Therefore, decomposition (\ref{diaspasi}) of the tangent space of the orthogonal group $G= \SO(k_1+k_2+k_3)$ takes the form

\begin{equation}\label{decom_so(n)}
 \fr{so}(k_1+k_2+k_3) = \fr{m}_1\oplus \fr{m}_2\oplus \fr{m}_3\oplus  \fr{m}_{12}\oplus  \fr{m}_{13}\oplus  \fr{m}_{23}. 
\end{equation}

Let $E_{ab}$ denotes the $n\times n$ matrix with $1$ at the $(ab)$-entry and $0$ elsewhere.
Then the set
$\mathcal{B}=\{e_{ab}=E_{ab}-E_{ba}: 1\le a<b\le n\}$
constitutes a $(-B)$-orthonormal basis of $\fr{so}(n)$.
Note that $e_{ba}=-e_{ab}$, thus we have the following:

\begin{lemma}\label{brac}
If all four indices are distinct, then the Lie brakets in $\mathcal{B}$ are zero.
Otherwise,
$[e_{ab}, e_{bc}]=e_{ac}$, where $a,b,c$ are distinct.
\end{lemma}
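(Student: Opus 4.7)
My proof would be a direct computation using the product rule in the matrix algebra, namely $E_{ij} E_{kl} = \delta_{jk} E_{il}$, which yields the commutator
\[
[E_{ij}, E_{kl}] = \delta_{jk} E_{il} - \delta_{li} E_{kj}.
\]
From here I would expand
\[
[e_{ab}, e_{cd}] = [E_{ab} - E_{ba},\, E_{cd} - E_{dc}]
\]
into four commutators and substitute the formula above. Each of the four resulting terms is a product of Kronecker deltas times a single matrix unit $E_{\ast\ast}$.

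For the first assertion, if $a, b, c, d$ are all distinct then every Kronecker delta $\delta_{bc}, \delta_{ad}, \delta_{bd}, \delta_{ac}$ appearing in the expansion vanishes, so the bracket is zero. For the second assertion, I would reduce to the canonical form $[e_{ab}, e_{bc}]$ with $a, b, c$ pairwise distinct (any bracket of two basis elements sharing exactly one index can be placed in this form using the antisymmetry relation $e_{ba} = -e_{ab}$, possibly with a sign that I can track and absorb). Expanding, only the contributions with the shared index $b$ in the right position survive: the term $[E_{ab}, E_{bc}]$ gives $E_{ac}$, the term $[E_{ba}, E_{cb}]$ gives $-E_{ca}$, and the two cross terms vanish. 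Adding these yields $E_{ac} - E_{ca} = e_{ac}$, as claimed.

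There is essentially no obstacle here: the lemma is a standard identity for the orthogonal Lie algebra and the proof is purely a bookkeeping exercise with Kronecker deltas. The only point requiring minor care is the sign convention when reducing an arbitrary bracket $[e_{ab}, e_{cd}]$ (with ordered indices $a<b$, $c<d$) sharing one index to the canonical cyclic form $[e_{ab}, e_{bc}]$; this is handled uniformly by the relation $e_{ba} = -e_{ab}$ noted just before the lemma.
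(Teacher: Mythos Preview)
Your computation is correct and is the standard argument; the paper itself states this lemma without proof, treating it as an elementary consequence of the definition $e_{ab}=E_{ab}-E_{ba}$ and the relation $e_{ba}=-e_{ab}$. Your expansion via $[E_{ij},E_{kl}]=\delta_{jk}E_{il}-\delta_{li}E_{kj}$ is exactly the routine verification one would supply if asked to fill in the details.
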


By using Lemma \ref{brac} we obtain:

\begin{lemma}\label{brackets}  The submodules in the decomposition (\ref{decom_so(n)}) satisfy the following bracket relations:
\begin{center}
\begin{tabular}{lll}
$[ \fr{m}_1, \fr{m}_1] = \fr{m}_1,$  &   $[ \fr{m}_2, \fr{m}_2] = \fr{m}_2,$  & $[ \fr{m}_3, \fr{m}_3] = \fr{m}_3,$ \\
$[ \fr{m}_1, \fr{m}_{12}] = \fr{m}_{12},$   &   $[ \fr{m}_1, \fr{m}_{13}] = \fr{m}_{13},$  & 
$[\fr{m}_2, \fr{m}_{12}] =  \fr{m}_{12},$\\ 
$[ \fr{m}_2, \fr{m}_{23}] = \fr{m}_{23},$ & 
  $[ \fr{m}_3, \fr{m}_{13}] = \fr{m}_{13},$  & $[ \fr{m}_3, \fr{m}_{23}] = \fr{m}_{23},$ \\
$ [ \fr{m}_{12}, \fr{m}_{23}] = \fr{m}_{13},$   &   $[ \fr{m}_{13}, \fr{m}_{23}] = \fr{m}_{12},$  &
 $[\fr{m}_{12}, \fr{m}_{13}] = \fr{m}_{23}$\\ 
$[ \fr{m}_{12}, \fr{m}_{12}] = \fr{m}_{1}\oplus  \fr{m}_{2},$   &   $[ \fr{m}_{13}, \fr{m}_{13}] = \fr{m}_{1} \oplus \fr{m}_{3},$  &
 $[\fr{m}_{23}, \fr{m}_{23}] = \fr{m}_{2} \oplus \fr{m}_{3}.$ 
\end{tabular}
\end{center}  
  \end{lemma}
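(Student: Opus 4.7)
The plan is to work with the $(-B)$-orthonormal basis $\mathcal{B}=\{e_{ab}:1\le a<b\le n\}$ and exploit Lemma \ref{brac}. First I would partition the index set $\{1,\dots,n\}$ into the three consecutive blocks $I_1=\{1,\dots,k_1\}$, $I_2=\{k_1+1,\dots,k_1+k_2\}$, $I_3=\{k_1+k_2+1,\dots,n\}$, and then read off from the matrix descriptions in (\ref{modules}) that
\[
\fr{m}_i=\Span\{e_{ab}:a,b\in I_i,\,a<b\},\qquad
\fr{m}_{ij}=\Span\{e_{ab}:a\in I_i,\,b\in I_j\}\quad(i<j),
\]
so that every submodule in the decomposition (\ref{decom_so(n)}) is spanned by a distinguished subset of $\mathcal{B}$.

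Second, I would verify each of the twelve bracket relations by picking arbitrary basis vectors in the two factors and invoking Lemma \ref{brac}. The diagonal relations $[\fr{m}_i,\fr{m}_i]=\fr{m}_i$ are immediate from the fact that $\fr{m}_i\cong\fr{so}(k_i)$ is a subalgebra. For a mixed relation of type $[\fr{m}_i,\fr{m}_{ij}]$, take $e_{ab}$ with $a,b\in I_i$ and $e_{cd}$ with $c\in I_i$, $d\in I_j$; the bracket vanishes unless one of $a,b$ coincides with $c$, in which case the output is $\pm e_{xd}$ with $x\in I_i$, $d\in I_j$, hence in $\fr{m}_{ij}$. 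For $[\fr{m}_{ij},\fr{m}_{jk}]$ with $\{i,j,k\}=\{1,2,3\}$, the only nonvanishing brackets come from matching the common block index in $I_j$, and the result is $\pm e_{ac}$ with one index in $I_i$ and the other in $I_k$, hence in $\fr{m}_{ik}$. For $[\fr{m}_{ij},\fr{m}_{ij}]$, the two possible index matches produce outputs either with both indices in $I_i$ or both in $I_j$, giving containment in $\fr{m}_i\oplus\fr{m}_j$.

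Third, to upgrade each containment to equality I would exhibit enough brackets to span the target. For instance in $[\fr{m}_{12},\fr{m}_{12}]$, fixing any $b\in I_2$ and letting $a,c$ range over distinct elements of $I_1$ gives the full spanning set $\{e_{ac}:a,c\in I_1\}$ of $\fr{m}_1$, and symmetrically for $\fr{m}_2$; an analogous check handles the other relations. The only point requiring a little care is that this exhaustive accounting uses $k_i\ge 2$ for each block in the claims $[\fr{m}_{ij},\fr{m}_{ij}]=\fr{m}_i\oplus\fr{m}_j$, which matches the standing assumption of the section.

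No step is genuinely hard; the main obstacle is organizational, namely handling the sign bookkeeping from $e_{ba}=-e_{ab}$ cleanly across the twelve cases. I would present the computation once in detail for a representative mixed case such as $[\fr{m}_{12},\fr{m}_{13}]=\fr{m}_{23}$ (where $[e_{ab},e_{ad}]=-e_{bd}\in\fr{m}_{23}$ for $a\in I_1,b\in I_2,d\in I_3$) and for a diagonal case $[\fr{m}_{12},\fr{m}_{12}]=\fr{m}_1\oplus\fr{m}_2$, and remark that the remaining relations follow from identical reasoning by permuting the roles of $I_1,I_2,I_3$.
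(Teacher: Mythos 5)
Your argument is correct and is precisely the proof the paper silently invokes when it says ``By using Lemma \ref{brac} we obtain'' --- you identify each submodule with a block of the index partition $I_1\cup I_2\cup I_3$ and apply Lemma \ref{brac} case by case, verifying spanning for the reverse containments.

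One small imprecision worth flagging: in the diagonal case you say the relation $[\fr{m}_i,\fr{m}_i]=\fr{m}_i$ is ``immediate from the fact that $\fr{m}_i\cong\fr{so}(k_i)$ is a subalgebra.'' Being a subalgebra only gives the inclusion $[\fr{m}_i,\fr{m}_i]\subset\fr{m}_i$; equality holds because $\fr{so}(k_i)$ is semisimple (hence perfect), which requires $k_i\ge 3$. For $k_i=2$ one has $\fr{so}(2)$ abelian, so $[\fr{m}_i,\fr{m}_i]=0\subsetneq\fr{m}_i$. The paper is aware of this distinction --- it records afterwards that the triplet symbol $\displaystyle{{i\brack ii}}$ is nonzero only for $k_i\ge 3$ --- and since the lemma is really used to bound which triplets can be nonzero, the containments are what matter; still, your phrasing should either note the $k_i\ge 3$ caveat or weaken the diagonal claim to an inclusion. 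The rest of your case analysis and the spanning argument are clean and complete.
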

  
   Therefore,  we see that the only non zero symbols (up to permutation of indices) are 
  \begin{equation}\label{triplets}
{1 \brack {11}},  {2 \brack {22}},   {3 \brack {33}}, {(12) \brack {1(12)}},   {(13) \brack {1(13)}}, 
   {(12) \brack {2(12)}},    {(23) \brack {2(23)}},  {(13) \brack {3(13)}},  {(23) \brack {3(23)}},  {(13) \brack {(12)(23)}}, 
\end{equation}
where  $\displaystyle{{i \brack {i i}} }$ is non zero only for $k_i \geq 3$ ($ i = 1, 2, 3$).

Now we take into account the diffeomorphism
$$
G/\{e\}\cong (G\times \SO(k_1)\times\SO(k_2)\times\SO(k_3))/{\rm diag}(\SO(k_1)\times\SO(k_2)\times\SO(k_3))
$$ 
and consider left-invariant metrics on $G$ which are determined by the  
  $\Ad(\SO(k_1)\times\SO(k_2)\times\SO(k_3))$-invariant scalar products on $\fr{so}(k_1+k_2+k_3)$ given by
 \begin{equation} \label{metric001} 
\begin{array}{lll}
 \langle \  ,\   \rangle &=&  x_1 \, (-B) |_{\fr{so}(k_1)}+ x_2 \, (-B) |_{ \fr{so}(k_2)}+  x_3 \, (-B) |_{ \fr{so}(k_3)} 
 \\ &  & + x_{12} \,  (-B) |_{ \fr{m}_{12}}+ x_{13} \,  (-B) |_{ \fr{m}_{13}} + x_{23} \,  (-B) |_{ \fr{m}_{23}} 
 \end{array}.
\end{equation}

For $k_3=1$ we also consider  left-invariant metrics on $G$ which are determined by the  
$\Ad( \SO(k_1)\times\SO(k_2))$-invariant scalar products on $\fr{so}(k_1+k_2+1)$ of the form 
\begin{equation} \label{metric002} 
\begin{array}{l}
 \langle\  ,\   \rangle =  x_1 \, (-B) |_{\fr{so}(k_1)}+ x_2 \, (-B) |_{ \fr{so}(k_2)}+   
 x_{12} \,  (-B) |_{ \fr{m}_{12}}+ x_{13} \,  (-B) |_{ \fr{m}_{13}} + x_{23} \,  (-B) |_{ \fr{m}_{23}} .
\end{array}
\end{equation}

Finally, for $k_1=n-2$ and $k_2=k_3=1$ we consider
left-invariant metrics on $G$ which are determined by the  
$\Ad( \SO(n-2))$-invariant scalar products on $\fr{so}(n)$ of the form 
\begin{equation} \label{metric003} 
\begin{array}{l}
 \langle\  ,\   \rangle =  x_1 \, (-B) |_{\fr{so}(n-2)}+   
 x_{12} \,  (-B) |_{ \fr{m}_{12}}+ x_{13} \,  (-B) |_{ \fr{m}_{13}} + x_{23} \,  (-B) |_{ \fr{m}_{23}}. 
\end{array}
\end{equation}

For the scalar products (\ref{metric002}) the only non zero triplets are
\begin{equation*}
{1 \brack {11}},  {2 \brack {22}},    {(12) \brack {1(12)}},   {(13) \brack {1(13)}}, 
   {(12) \brack {2(12)}},    {(23) \brack {2(23)}},      {(13) \brack {(12)(23)}}, 
\end{equation*}
 and  for the  scalar products (\ref{metric003}) the only non zero triplets are
\begin{equation*}
{1 \brack {11}},   {(12) \brack {1(12)}},   {(13) \brack {1(13)}},    {(13) \brack {(12)(23)}}. 
\end{equation*}

\section{Naturally reductive metrics on the compact Lie groups $\SO(n)$} 

A Riemannian homogeneous space $(M=G/K, g)$ with reductive complement $\fr{m}$ of $\fr{k}$ in $\fr{g}$ is called {\it naturally reductive} if
$$
\langle [X, Y]_\fr{m}, Z\rangle +\langle Y, [X, Z]_\fr{m}\rangle=0 \quad\mbox{for all}\ 
X, Y, Z\in\fr{m}.
$$ 
Here $\langle\ , \  \rangle$ denotes the inner product on $\fr{m}$ induced from the Riemannian metric $g$.  Classical examples of naturally reductive homogeneous spaces include
irreducible symmetric spaces, isotropy irreducible homogeneous manifolds, 
and Lie groups with bi-invariant metrics.
In general it is not always easy to decide if a given homogeneous Riemannian manifold is naturally reductive, since one has to consider all possible transitive actions of subgroups
$G$ of the isometry group of $(M, g)$.

In \cite{DZ}  D'Atri and Ziller had investigated naturally reductive metrics among left-invariant metrics on compact Lie groups and gave a complete classification in the case of simple Lie groups.
Let $G$ be a compact, connected semisimple Lie group, $L$ a closed subgroup of $G$ and let
$\fr{g}$ be the Lie algebra of $G$ and $\fr{l}$ the subalgebra corresponding to $L$.
We denote by $Q$ the negative of the Killing form of $\fr{g}$.  Then $Q$ is an
$\Ad (G)$-invariant inner product on $\fr{g}$.

Let $\fr{m}$ be an orthogonal complement of $\fr{l}$ with respect to $Q$.  Then we have
$$
\fr{g}=\fr{l}\oplus\fr{m}, \quad \Ad(L)\fr{m}\subset\fr{m}.
$$
Let $\fr{l}=\fr{l}_0\oplus\fr{l}_1\oplus\cdots\oplus{l}_p$ be a decomposition of $\fr{l}$ into ideals, where $\fr{l}_0$ is the center of $\fr{l}$ and $\fr{l}_i$ $(i=1,\dots , p)$ are simple ideals of $\fr{l}$.
Let $A_0|_{\fr{l}_0}$ be an arbitrary metric on $\fr{l}_0$.

\begin{theorem}\label{DZ} (\cite[Theorem 1, p. 9]{DZ})  Under the notations above a left-invariant metric on $G$ of the form
\begin{equation}\label{natural}
\langle\  ,\  \rangle =x\cdot Q|_{\fr{m}}+A_0|_{\fr{l}_0}+u_1\cdot Q|_{\fr{l}_1}+\cdots
+ u_p\cdot Q|_{\fr{l}_p}, \quad (x, u_1, \dots , u_p >0)
\end{equation}
is naturally reductive with respect to $G\times L$, where $G\times L$ acts on $G$ by
$(g, l)y=gyl^{-1}$.

Moreover, if a left-invariant metric $\langle\ ,\ \rangle$ on a compact simple Lie group
$G$ is naturally reductive, then there is a closed subgroup $L$ of $G$ and the metric
$\langle\ ,\ \rangle$ is given by the form (\ref{natural}).
\end{theorem}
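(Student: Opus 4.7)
The plan is to realize $G$ as $(G\times L)/\diag(L)$ via the action $(g,l)\cdot y=gyl^{-1}$, and to recover the metric (\ref{natural}) as the image of a bi-invariant (possibly indefinite) symmetric bilinear form $\tilde Q$ on $\fr{h}=\fr{g}\oplus\fr{l}$ under the corresponding submersion $\pi$. The differential $d\pi_{(e,e)}\colon (X,Y)\mapsto X-Y$ identifies any $\Ad(\diag(L))$-invariant complement of $\diag(\fr{l})$ with $\fr{g}=T_eG$, so a naturally reductive structure with respect to $G\times L$ is equivalent to choosing such a complement together with a compatible metric.

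For the first assertion I would construct $\tilde Q$ explicitly. Since $\fr{g}$ is simple, $\Ad(G)$-invariance forces $\tilde Q|_{\fr{g}\oplus 0}=x\cdot Q$, while on the second factor the most general $\Ad(L)$-invariant symmetric bilinear form is $\tilde A_0\oplus\bigoplus_{i=1}^p\tilde w_i\cdot Q|_{\fr{l}_i}$, with $\tilde A_0$ an arbitrary form on the center $\fr{l}_0$ and $\tilde w_i\in\bb{R}$ (positivity not required). Taking $\fr{p}=\diag(\fr{l})^{\perp_{\tilde Q}}$, a direct calculation shows that $\fr{p}$ is spanned by $\{(X,0):X\in\fr{m}\}$ together with lines $\{(\tilde w_i Y,-xY):Y\in\fr{l}_i\}$ (and an analogous block over $\fr{l}_0$), and that the pushed-down metric on $\fr{g}$ is $x\cdot Q|_{\fr{m}}+A_0+\sum_i [x\tilde w_i/(\tilde w_i+x)]\cdot Q|_{\fr{l}_i}$. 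Inverting $u_i=x\tilde w_i/(\tilde w_i+x)$ yields $\tilde w_i=u_ix/(x-u_i)$, which realizes (\ref{natural}) for every prescribed $u_i\neq x$; the borderline case $u_i=x$ is handled by dropping the $i$-th simple factor of $L$ and absorbing $\fr{l}_i$ into the bi-invariant part.

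Once (\ref{natural}) is realized as the submersion image of a bi-invariant $\tilde Q$, natural reductivity follows immediately: for $X,Y,Z\in\fr{p}$, $\Ad$-invariance of $\tilde Q$ gives $\tilde Q([X,Y],Z)+\tilde Q(Y,[X,Z])=0$, and since $\fr{p}\perp_{\tilde Q}\diag(\fr{l})$ the $\diag(\fr{l})$-components in the brackets contribute zero, leaving $\langle[X,Y]_{\fr{p}},Z\rangle+\langle Y,[X,Z]_{\fr{p}}\rangle=0$, the defining identity of a naturally reductive structure.

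For the converse the main obstacle is of a completely different character: from an arbitrary naturally reductive left-invariant metric on compact simple $G$ one must extract a transitive isometry subgroup of the precise form $G\times L$ with $L\subseteq G$. Here I would appeal to Kostant's theorem that every naturally reductive Riemannian manifold carries a canonical metric connection with parallel torsion and curvature, so that the associated transvection group acts transitively; combining this with the classification of transitive effective actions of compact groups on a compact simple Lie group should force the transvection group to factor through $G\times L$ for a suitable closed $L\subseteq G$, and the induced metric then takes the shape (\ref{natural}). Handling effectiveness, compactness and the possible outer automorphisms carefully enough to pin down $L$ and the metric parameters is the genuinely technical content of \cite{DZ}, and I expect it to be the hardest step.
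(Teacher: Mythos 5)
The paper does not prove this theorem; it is quoted from D'Atri and Ziller \cite{DZ}, so there is no in-paper argument to compare against. Your sufficiency argument is correct and is essentially that of \cite{DZ}: realize $G$ as $(G\times L)/\diag(L)$ under $(g,l)\cdot y = gyl^{-1}$, equip $\fr{g}\oplus\fr{l}$ with the bi-invariant (possibly indefinite) symmetric form $\tilde Q = xQ\oplus\bigl(\tilde A_0\oplus\bigoplus_i\tilde w_i\, Q|_{\fr{l}_i}\bigr)$, take $\fr{p}=\diag(\fr{l})^{\perp_{\tilde Q}}$, identify it with $\fr{g}$ through $d\pi_{(e,e)}(X,Y)=X-Y$, and read off the induced inner product $x\,Q|_{\fr{m}}+A_0+\sum_i\frac{x\tilde w_i}{\tilde w_i+x}\,Q|_{\fr{l}_i}$. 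The inversion $\tilde w_i=u_ix/(x-u_i)$, the treatment of $u_i=x$ by absorbing $\fr{l}_i$, and the derivation of the natural reductivity identity from bi-invariance of $\tilde Q$ together with $\fr{p}\perp_{\tilde Q}\diag(\fr{l})$ are all correct. One small imprecision: you say simplicity of $\fr{g}$ \emph{forces} $\tilde Q|_{\fr{g}}=xQ$, but the sufficiency clause of the theorem is stated for $G$ compact semisimple; since you are constructing $\tilde Q$ rather than classifying it, simply choosing $xQ$ on the first factor is enough and uniqueness is irrelevant.

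The genuine gap is the ``Moreover'' clause. You correctly name the strategy --- Kostant's result that the transvection group of a naturally reductive space acts transitively, combined with a classification of transitive effective actions of compact groups on a compact simple Lie group to pin the acting group down to $G\times L$ with $L\subseteq G$ closed and the metric to the shape (\ref{natural}) --- but none of this is carried out, and you explicitly defer the substance to \cite{DZ}. As written, the necessity half of the theorem is an assertion, not a proof. That is consistent with the paper, which also only cites the result, but it means your attempt at a self-contained proof of the full statement is incomplete.
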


\medskip
For the Lie group  $\SO(n)$, we consider  left-invariant metrics determined by the $\Ad(\SO(k_1)\times\SO(k_2)\times\SO(k_3))$-invariant scalar products of the form (\ref{metric001}) where $n = k_1+k_2+k_3$. Recall that $K = \SO(k_1)\times\SO(k_2)\times\SO(k_3)$ with Lie algebra 
$\fr{k}=\fr{so}(k_1)\oplus\fr{so}(k_2)\oplus\fr{so}(k_3)$.

\begin{prop}\label{prop5.1}
If a left invariant metric $\langle\  ,\   \rangle$ of the form {\em (\ref{metric001})} on $\SO(n)$ is naturally reductive  with respect to $\SO(n)\times L$ for some closed subgroup $L$ of $\SO(n)$, 
then one of the following holds: 

{\em 1) } $x_1 = x_2 = x_{12}$,   $x_{13} = x_{23}$  \,   {\em  2)}   $x_2 = x_3 = x_{23}$,   $x_{12} = x_{13}$  \ 
{\em 3)}  $x_1 = x_3 = x_{13}$,   $x_{12} = x_{23}$,   \,   {\em  4)} $ x_{12} = x_{13} = x_{23} $.

Conversely, 
 if  one of the conditions {\em 1)},  {\em 2)},   {\em 3)},   {\em 4)} is satisfied, then the metric 
 $\langle\  , \   \rangle$ of the form {\em (\ref{metric001})}  is  naturally reductive  with respect to $\SO(n)\times L$ for some closed subgroup $L$ of $\SO(n)$.
  \end{prop}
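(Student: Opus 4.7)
The plan is to rely on the D'Atri--Ziller characterization, Theorem \ref{DZ}: on the simple Lie group $\SO(n)$ a left-invariant metric is naturally reductive with respect to $\SO(n)\times L$ precisely when it can be written in the form (\ref{natural}) for some closed subgroup $L\subset\SO(n)$. The converse direction will be handled case-by-case by exhibiting a suitable $L$; the forward direction will be handled by classifying the admissible subalgebras $\fr{l}\subset\fr{so}(n)$ and reading off the resulting constraints on $x_1,x_2,x_3,x_{12},x_{13},x_{23}$.

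For the converse I would exhibit the following choices. In case 1) take $L=\SO(k_1+k_2)\times\SO(k_3)$, so $\fr{l}=\fr{so}(k_1+k_2)\oplus\fr{so}(k_3)$ and its $(-B)$-orthogonal complement is $\fr{m}=\fr{m}_{13}\oplus\fr{m}_{23}$; the hypothesis $x_{13}=x_{23}$ furnishes the single scalar required on $\fr{m}$, while $x_1=x_2=x_{12}$ makes the restriction of $\langle\,,\,\rangle$ to $\fr{so}(k_1+k_2)=\fr{m}_1\oplus\fr{m}_2\oplus\fr{m}_{12}$ proportional to $(-B)|_{\fr{so}(k_1+k_2)}$ and hence to the Killing form of this ideal, and $x_3$ fixes the scalar attached to the simple ideal $\fr{so}(k_3)$ (or the value of $A_0$ on the one-dimensional center when $k_3=2$). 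Cases 2) and 3) are treated analogously with $L=\SO(k_1)\times\SO(k_2+k_3)$ and $L=\SO(k_2)\times\SO(k_1+k_3)$, while case 4) uses $L=K$, for which $\fr{m}=\fr{m}_{12}\oplus\fr{m}_{13}\oplus\fr{m}_{23}$ receives the common scalar $x_{12}=x_{13}=x_{23}$ and the $x_i$'s appear as the scalars on the simple (or abelian) factors of $\fr{k}$.

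For the forward direction, assume $\langle\,,\,\rangle$ is naturally reductive with respect to $\SO(n)\times L$ and use Theorem \ref{DZ} to obtain the form (\ref{natural}) with $\fr{so}(n)=\fr{l}\oplus\fr{m}$. Viewing $\langle\,,\,\rangle$ as a $(-B)$-self-adjoint endomorphism $A$ of $\fr{so}(n)$, the $\Ad(K)$-invariance of both $\langle\,,\,\rangle$ and $-B$ forces $A$ to commute with $\Ad(K)$, so every eigenspace of $A$ is $\Ad(K)$-stable. Because the six modules in (\ref{decom_so(n)}) are irreducible and pairwise non-equivalent $\Ad(K)$-modules, each such eigenspace is a direct sum of some of $\fr{m}_1,\fr{m}_2,\fr{m}_3,\fr{m}_{12},\fr{m}_{13},\fr{m}_{23}$; exploiting the freedom in (\ref{natural}) to reassign summands with coinciding eigenvalues among $\fr{m}$ and the $\fr{l}_i$, I would arrange that $\fr{l}$ itself is a sum of some subset of these six modules.

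The remaining step is to enumerate the $\Ad(K)$-invariant Lie subalgebras arising in this way, using the bracket table of Lemma \ref{brackets}. The key closure observations are that $\fr{m}_{ij}\subset\fr{l}$ forces $\fr{m}_i,\fr{m}_j\subset\fr{l}$ (via $[\fr{m}_{ij},\fr{m}_{ij}]=\fr{m}_i\oplus\fr{m}_j$), and that any two of $\fr{m}_{12},\fr{m}_{13},\fr{m}_{23}$ bracket to the third. Hence the candidates for $\fr{l}$ are precisely $0$, the direct sums built from the $\fr{so}(k_i)$'s, each $\fr{so}(k_i+k_j)$ (possibly together with $\fr{so}(k_l)$), and $\fr{so}(n)$ itself; requiring $\langle\,,\,\rangle$ to be a fixed multiple of $-B$ on each simple ideal $\fr{l}_i$ and on $\fr{m}$ then yields in each case at least one of the four stated conditions (with some cases giving stronger systems, e.g.\ $\fr{l}=\fr{so}(k_1+k_2)$ forces $x_3=x_{13}=x_{23}$ in addition to condition 1), and the extremes $\fr{l}=0$ or $\fr{so}(n)$ force all six $x$'s to coincide, which is a trivial special case of all four). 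The main obstacle will be the rigorous justification of the reduction to $\Ad(K)$-invariant $\fr{l}$, together with the careful handling of abelian factors $\fr{so}(2)$ absorbed into the center $\fr{l}_0$ with its arbitrary inner product $A_0$, so as not to miss or introduce spurious constraints.
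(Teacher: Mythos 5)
Your converse direction matches the paper's: the paper simply cites Theorem \ref{DZ}, and the subgroups $L$ you exhibit in the four cases are exactly the right ones. For the forward direction both you and the paper start from Theorem \ref{DZ}, but then diverge. The paper distinguishes $\fr{l}\subset\fr{k}$ (which yields $x_{12}=x_{13}=x_{23}$ at once from $\fr{k}^\perp\subset\fr{l}^\perp$) from $\fr{l}\not\subset\fr{k}$; in the latter case it passes to the \emph{larger} subalgebra $\fr{h}$ generated by $\fr{l}$ and $\fr{k}$, which, because it contains $\fr{k}$, is automatically $\Ad(K)$-invariant, hence a sum of the six modules; since it properly contains $\fr{k}$ it contains some $\fr{m}_{ij}$ and, by the brackets in Lemma \ref{brackets}, equals $\fr{so}(k_i+k_j)\oplus\fr{so}(k_l)$ or all of $\fr{so}(n)$. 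The constraints then follow from $\fr{h}^\perp\subset\fr{l}^\perp$ (giving $x_{il}=x_{jl}$) together with the metric being a multiple of $Q$ on each simple ideal of $\fr{h}$ (which holds because the metric endomorphism commutes with $\ad\fr{l}$, by the form (\ref{natural}), and with $\ad\fr{k}$, by $\Ad(K)$-invariance, hence with $\ad\fr{h}$). You instead try to \emph{shrink} $\fr{l}$ to an $\Ad(K)$-invariant subalgebra by ``reassigning'' summands with coinciding eigenvalues, and you correctly flag this as the main obstacle. It can be carried out rigorously by setting $\fr{l}'=(V_x)^\perp$, where $V_x$ is the full eigenspace of the metric endomorphism with eigenvalue $x$: this $\fr{l}'$ is a sum of ideals of $\fr{l}$ (hence a Lie subalgebra), is $\Ad(K)$-stable (being an eigenspace complement), and the metric is still a multiple of $Q$ on each of its simple ideals. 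Both routes reach the same enumeration and conclusion, so your plan is sound; the paper's choice of enlarging $\fr{l}$ to $\fr{h}\supset\fr{k}$ is somewhat cleaner precisely because $\Ad(K)$-invariance is then built in, sidestepping the reassignment step you were uneasy about.
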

  
\begin{proof}   Let ${\frak l}$ be the Lie algebra of  $L$. Then we have either ${\frak l} \subset {\frak k}$  or ${\frak l} \not\subset {\frak k}$. 
First we consider the case of  ${\frak l} \not\subset {\frak k}$. Let ${\frak h}$ be the subalgebra of ${\frak g}$ generated by ${\frak l}$ and ${\frak k}$. 
Since 
$ \fr{so}(k_1+k_2+k_3) = \fr{m}_1\oplus \fr{m}_2\oplus \fr{m}_3\oplus  \fr{m}_{12}\oplus  \fr{m}_{13}\oplus  \fr{m}_{23}$ is an irreducible decomposition as $\mbox{Ad}(K)$-modules, we see that the Lie algebra $\frak h$  contains  at least one of  ${\frak m}_{12}$,  ${\frak m}_{13}$, ${\frak m}_{23}$. 
We first consider the case that $\frak h$  contains ${\frak m}_{12}$. 
 Note that 
$\left[{\frak m}_{12}, {\frak m}_{12}\right] = {\frak m}_1 \oplus \fr{m}_2$ and $\fr{m}_1 \oplus {\frak m}_2  \oplus {\frak m}_{12}$ is a subalgebra $\fr{so}(k_1+k_2)$. 
Thus we see that $\frak h$ contains $\fr{so}(k_1+k_2) \oplus \fr{so}(k_3)$. 
If $\frak h =\fr{so}(k_1+k_2) \oplus \fr{so}(k_3)$, then we obtain an irreducible decomposition $ \fr{so}(k_1+k_2+k_3) = \frak h \oplus \fr{n}$, where $\fr{n} = {\frak m}_{13}\oplus{\frak m}_{23}$. Hence, the metric $\langle\   ,\   \rangle$ of the form (\ref{metric001}) satisfies $x_1 = x_2 = x_{12}$ and   $x_{13} = x_{23}$, so we obtain case 1).  Cases 2) and 3) are obtained by a  similar way. 

Now we consider the case ${\frak l} \subset {\frak k}$.  Since the  orthogonal complement
 ${\frak l}^{\bot}$ of ${\frak l}$ with respect to $-B$ contains the  orthogonal complement 
${\frak k}^{\bot}$ of ${\frak k}$, we see that ${\frak l}^{\bot} \supset {\frak m}_{12} \oplus  {\frak m}_{13}\oplus  {\frak m}_{23}$.   
Since the  invariant metric $\langle \  , \  \rangle$ is naturally reductive  with respect to $G\times L$,  
 it follows that  $ x_{12} = x_{13} = x_{23} $ by  Theorem \ref{DZ}.   
The converse is a direct consequence of Theorem \ref{DZ}.
\end{proof}

For the 
$\Ad( \SO(k_1)\times\SO(k_2))$-invariant metrics of the form 
(\ref{metric002}) the above proposition reduces to the following:

\begin{prop}\label{prop5.2}
If a left invariant metric $\langle\  ,  \  \rangle$ of the form {\em (\ref{metric002})} on $\SO(n)$ is naturally reductive  with respect to $\SO(n)\times L$ for some closed subgroup $L$ of $\SO(n)$, 
then one of the following holds: 

{\em 1) } $x_1 = x_2 = x_{12}$,   $x_{13} = x_{23}$,  \,   {\em  2)}   $x_2  = x_{23}$,   $x_{12} = x_{13}$,  \ 
{\em 3)}  $x_1  = x_{13}$,   $x_{12} = x_{23}$ ,  \,   {\em  4)} $ x_{12} = x_{13} = x_{23} $.

Conversely, 
 if  one of the conditions {\em 1)},  {\em 2)},   {\em 3)},   {\em 4)} is satisfied, then the metric $\langle \ , \   \rangle$ of the form {\em (\ref{metric002})}  is  naturally reductive  with respect to $\SO(n)\times L$ for some closed subgroup $L$ of $\SO(n)$.
  \end{prop}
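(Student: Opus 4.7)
The plan is to mimic the proof of Proposition \ref{prop5.1}, carefully taking into account that here $k_3 = 1$, so $\fr{so}(k_3) = \{0\}$, $\fr{m}_3 = \{0\}$, and $\fr{k} = \fr{so}(k_1)\oplus\fr{so}(k_2)$. Writing $\fr{l}$ for the Lie algebra of $L$, I would split into the dichotomy $\fr{l}\subset\fr{k}$ versus $\fr{l}\not\subset\fr{k}$. In the first case, the $Q$-orthogonal complement of $\fr{l}$ contains $\fr{m}_{12}\oplus\fr{m}_{13}\oplus\fr{m}_{23}$, and an immediate application of Theorem \ref{DZ} forces $x_{12}=x_{13}=x_{23}$, giving case 4).

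When $\fr{l}\not\subset\fr{k}$, let $\fr{h}$ be the subalgebra generated by $\fr{l}$ and $\fr{k}$. As in Proposition \ref{prop5.1}, since $\fr{k}^\perp=\fr{m}_{12}\oplus\fr{m}_{13}\oplus\fr{m}_{23}$ decomposes into irreducible non-equivalent $\Ad(K)$-modules, $\fr{h}$ contains at least one of $\fr{m}_{12}, \fr{m}_{13}, \fr{m}_{23}$. For the three sub-cases I would use Lemma \ref{brackets} together with $\fr{m}_3=\{0\}$:
if $\fr{h}\supset\fr{m}_{12}$, then $\fr{h}\supset\fr{m}_1\oplus\fr{m}_2\oplus\fr{m}_{12}=\fr{so}(k_1+k_2)$, and taking equality the orthogonal complement is $\fr{m}_{13}\oplus\fr{m}_{23}\cong M(k_1+k_2,1)$, irreducible under $\SO(k_1+k_2)$;
if $\fr{h}\supset\fr{m}_{13}$, then because $[\fr{m}_{13},\fr{m}_{13}]=\fr{m}_1\oplus\fr{m}_3=\fr{m}_1$, we have $\fr{h}\supset\fr{m}_1\oplus\fr{m}_{13}=\fr{so}(k_1+1)$ and hence $\fr{h}\supset\fr{so}(k_1+1)\oplus\fr{so}(k_2)$, with complement $\fr{m}_{12}\oplus\fr{m}_{23}\cong M(k_1+1,k_2)$;
and symmetrically if $\fr{h}\supset\fr{m}_{23}$, then $\fr{h}\supset\fr{so}(k_1)\oplus\fr{so}(k_2+1)$ with irreducible complement $\fr{m}_{12}\oplus\fr{m}_{13}$.

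In each sub-case the complement is $\Ad(\fr{h})$-irreducible, so Theorem \ref{DZ} forces the metric restricted to it to be proportional to $-B$ (giving $x_{13}=x_{23}$, $x_{12}=x_{23}$, $x_{12}=x_{13}$ respectively), while the same theorem requires the metric on each simple ideal of $\fr{h}$ to be proportional to $-B$ (yielding $x_1=x_2=x_{12}$ in the first sub-case, $x_1=x_{13}$ in the second, and $x_2=x_{23}$ in the third). These outcomes match cases 1), 3), 2). The converse is immediate: each of the four hypotheses is precisely an instance of the metric described by the form \eqref{natural} in Theorem \ref{DZ}, for the choice $L=\SO(k_1+k_2)$, $L=\SO(k_1)\times\SO(k_2+1)$, $L=\SO(k_1+1)\times\SO(k_2)$, and any $L\subset K$, respectively.

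The main obstacle I anticipate is handling the possibility that $\fr{h}$ is strictly larger than the three listed subalgebras. One observes however that if $\fr{h}$ contains two of the $\fr{m}_{ij}$, then by $[\fr{m}_{12},\fr{m}_{13}]=\fr{m}_{23}$ and cyclic analogues it contains all three and hence equals $\fr{g}$; in that situation the metric must be bi-invariant on $G$, which already forces every condition in the list simultaneously. So no new possibilities arise, and the enumeration of the four cases is complete.
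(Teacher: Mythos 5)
Your proof is correct and follows essentially the same route as the paper's proof of Proposition~\ref{prop5.1}; the paper itself states Proposition~\ref{prop5.2} without proof, noting only that the argument reduces from the general case, and your adaptation (replacing $\fr{so}(k_3)$ by $\{0\}$ and identifying the three maximal intermediate subalgebras $\fr{so}(k_1+k_2)$, $\fr{so}(k_1+1)\oplus\fr{so}(k_2)$, $\fr{so}(k_1)\oplus\fr{so}(k_2+1)$) is exactly what is intended. One small point of phrasing: the equalities on the simple ideals of $\fr{h}$ come not literally from Theorem~\ref{DZ} (which is stated for $\fr{l}$, not $\fr{h}$) but from the fact that the metric, being $\Ad(K)$- and $\Ad(L)$-invariant, is $\Ad(H)$-invariant and hence a multiple of $-B$ on each simple ideal by Schur's lemma; the paper's own proof of Proposition~\ref{prop5.1} is equally terse on this step.
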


  Finally, for the $\Ad(\SO(k_1))$-invariant metrics of the form (\ref{metric003}) we have the following:
  
  \begin{prop}\label{prop5.33}
If a left invariant metric $\langle\   ,\   \rangle$ of the form {\em (\ref{metric003})} on $\SO(n)$ is naturally reductive  with respect to $\SO(n)\times L$ for some closed subgroup $L$ of $\SO(n)$, 
then one of the following holds: 

{\em 1) } $x_1 =  x_{12}$,   $x_{13} = x_{23}$, \  \,     
{\em 2)}  $x_1  = x_{13}$,   $x_{12} = x_{23}$,  \  \,   {\em  3)} $ x_{12} = x_{13} $.

Conversely, 
 if  one of the conditions {\em 1)},  {\em 2)},   {\em 3)}   is satisfied, then the metric $\langle \  , \   \rangle$ of the form {\em (\ref{metric003})}  is  naturally reductive  with respect to $\SO(n)\times L$ for some closed subgroup $L$ of $\SO(n)$.
  \end{prop}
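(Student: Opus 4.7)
The plan is to follow closely the argument of Proposition~\ref{prop5.1}, with one substantive modification: since $K=\SO(n-2)$ acts on both $\fr{m}_{12}$ and $\fr{m}_{13}$ by the standard representation, the two summands are equivalent as $\Ad(K)$-modules. Consequently the $\Ad(K)$-invariant subspaces of $\fr{m}_{12}\oplus\fr{m}_{13}$ form a one-parameter family of $(n-2)$-dimensional diagonal submodules $\fr{v}_{a,b}=\{(aA,bA):A\in M(n-2,1)\}$, parametrized by $(a,b)\in\bb{R}^{2}\setminus\{0\}$ modulo common scaling, rather than only the two irreducible pieces. This is the only structural difference from the previous propositions.

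I would first dispose of the case $\fr{l}\subset\fr{k}$ as in the previous propositions: the inclusion $\fr{l}^{\perp}\supset\fr{k}^{\perp}=\fr{m}_{12}\oplus\fr{m}_{13}\oplus\fr{m}_{23}$, combined with Theorem~\ref{DZ}, forces $x_{12}=x_{13}=x_{23}$, which is (a strengthening of) condition~3). Otherwise, let $\fr{h}$ be the subalgebra of $\fr{g}$ generated by $\fr{l}$ and $\fr{k}$. Using the bracket identities of Lemma~\ref{brackets}, in particular $[\fr{v}_{a,b},\fr{m}_{23}]=\fr{v}_{-b,a}$ and $[\fr{m}_{12},\fr{m}_{13}]=\fr{m}_{23}$, one verifies that the only subalgebras of $\fr{g}$ strictly containing $\fr{k}$ are (B) $\fr{k}\oplus\fr{m}_{23}\cong\fr{so}(n-2)\oplus\fr{so}(2)$, (C) $\fr{k}\oplus\fr{v}_{a,b}\cong\fr{so}(n-1)$ for some $[a:b]$, and (D) $\fr{g}$ itself.

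Case (B) gives $\fr{l}^{\perp}\supset\fr{h}^{\perp}=\fr{m}_{12}\oplus\fr{m}_{13}$, and Theorem~\ref{DZ} immediately yields $x_{12}=x_{13}$, which is case~3). For case (C) the orthogonal complement is $\fr{h}^{\perp}=\fr{v}_{-b,a}\oplus\fr{m}_{23}$; requiring $\langle\cdot,\cdot\rangle$ to be a scalar multiple of $-B$ on this subspace yields the norm relation $(a^{2}+b^{2})x_{23}=b^{2}x_{12}+a^{2}x_{13}$. A key additional constraint, more delicate than in Proposition~\ref{prop5.1}, is that the direct sum decomposition in Theorem~\ref{DZ} must be $\langle\cdot,\cdot\rangle$-orthogonal; evaluating $\langle v_{i},w_{i}\rangle$ for $v_{i}=ae_{i,n-1}+be_{i,n}\in\fr{v}_{a,b}$ and $w_{i}=-be_{i,n-1}+ae_{i,n}\in\fr{v}_{-b,a}$ against our diagonal metric reduces this to $ab\,(x_{13}-x_{12})=0$. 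Hence either $x_{12}=x_{13}$ (which again forces case~3)), or $(a,b)=(1,0)$, so that $\fr{h}$ is the standard $\fr{so}(n-1)$ fixing $e_{n}$; in that subcase the norm relation gives $x_{13}=x_{23}$ and, since $\fr{so}(n-1)$ is simple, applying Theorem~\ref{DZ} to the metric on $\fr{h}$ also yields $x_{1}=x_{12}$, i.e.\ case~1). The case $(a,b)=(0,1)$ is symmetric and produces case~2). Case (D) imposes stronger conditions that are special instances of case~3).

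The converse direction is routine and follows the template of Proposition~\ref{prop5.1}: for case~1) take $L=\SO(n-1)$ fixing $e_{n}$, so that $\fr{l}=\fr{k}\oplus\fr{m}_{12}$; for case~2) take $L=\SO(n-1)$ fixing $e_{n-1}$, so that $\fr{l}=\fr{k}\oplus\fr{m}_{13}$; and for case~3) take $L=\SO(n-2)\times\SO(2)$ with $\fr{l}=\fr{k}\oplus\fr{m}_{23}$. In each case a direct check using the form~\eqref{natural} in Theorem~\ref{DZ} confirms natural reductivity. The hard part will be the careful bookkeeping in case~(C): the interplay between the norm relation on $\fr{h}^{\perp}$ and the cross-term orthogonality forced inside the isotypic component $\fr{m}_{12}\oplus\fr{m}_{13}$ is what isolates $(a,b)\in\{(1,0),(0,1)\}$ as the only subcases producing cases~1) and 2) separately from case~3).
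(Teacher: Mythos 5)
The paper states Proposition \ref{prop5.33} without proof, treating it as parallel to Proposition \ref{prop5.1}; your proposal correctly fills in that omitted argument and follows the same structural route. You identify the one genuine new difficulty — the equivalence of $\fr{m}_{12}$ and $\fr{m}_{13}$ as $\Ad(\SO(n-2))$-modules produces a one-parameter family of intermediate subalgebras $\fr{k}\oplus\fr{v}_{a,b}\cong\fr{so}(n-1)$ rather than finitely many — and you resolve it with the correct pair of constraints: the norm relation on $\fr{h}^{\perp}=\fr{v}_{-b,a}\oplus\fr{m}_{23}$ and the cross-term orthogonality $ab(x_{13}-x_{12})=0$ inside the isotypic block, which isolates $(a,b)\in\{(1,0),(0,1)\}$ as the subcases yielding 1) and 2) while $ab\neq0$ collapses to case 3). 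Two small points of polish: the clause ``applying Theorem~\ref{DZ} to the metric on $\fr{h}$'' is slightly off, since Theorem \ref{DZ} applies to $\fr{l}$, which may be a proper subalgebra of $\fr{h}$; the clean justification for $x_{1}=x_{12}$ is that the metric, being both $\Ad(\fr{k})$- and $\Ad(\fr{l})$-invariant, is $\ad(\fr{h})$-invariant, and the adjoint module of $\fr{so}(n-1)$ is irreducible, so the restriction to $\fr{h}$ is a multiple of $-B$. Second, for $n=5$ (where $\fr{so}(4)$ is not simple) this irreducibility fails, but a short calculation with the $\fr{su}(2)_{\pm}$ splitting still forces $x_{1}=x_{12}$, so the conclusion is unaffected; worth a remark if written in full. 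Neither affects correctness.
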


\section{The Ricci tensor for a class of left-invariant  metrics on $\SO(n)=\SO(k_1+k_2+k_3)$}

We will compute the Ricci tensor for the left-invariant metrics on $\SO(n)=\SO(k_1+k_2+k_3)$, determined by the
$\Ad( \SO(k_1)\times\SO(k_2)\times\SO(k_3))$-invariant scalar products of the form 
(\ref{metric001}). 
We use Lemma \ref{ric2} taking into account (\ref{triplets}), and we obtain the following:

\begin{prop}\label{lemma5.1}
The components  of  the Ricci tensor ${r}$ for the left-invariant metric $ \langle \  ,\   \rangle $ on $G$ defined by  {\em (\ref{metric001})}  are given as follows:  
\begin{equation}\label{eq13}
\small{\begin{array}{lll} 
r_1 &= & \displaystyle{\frac{1}{2 x_1} +
\frac{1}{4 d_1 } \biggl({1 \brack {11}}\frac{1}{x_{1}} +{1 \brack {(12)(12)}}  \frac{x_1}{{x_{12}}^2}+{1 \brack {(13)(13)}}  \frac{x_1}{{x_{13}}^2} \biggr)} \\ \\ & & 
 \displaystyle{- \frac{1}{2 d_1 } \biggl({1 \brack {11}}\frac{1}{x_{1}} +{(12) \brack {1(12)}}  \frac{1}{{x_{1}}}+{(13) \brack {1(13)}}  \frac{1}{{x_{1}}} \biggr),} 
 \\   \\
 r_2 &= & 
 \displaystyle{\frac{1}{2 x_2} +
\frac{1}{4 d_2 } \biggl({2 \brack {22}}\frac{1}{x_{2}} +{2 \brack {(12)(12)}}  \frac{x_2}{{x_{12}}^2}+{2 \brack {(23)(23)}}  \frac{x_2}{{x_{23}}^2} \biggr)} \\ \\ & & 
 \displaystyle{- \frac{1}{2 d_2 } \biggl({2 \brack {22}}\frac{1}{x_{2}} +{(12) \brack {2(12)}}  \frac{1}{{x_{2}}}+{(23) \brack {2(23)}}  \frac{1}{{x_{2}}} \biggr),} 
 \\  \\
 r_3 &= & 
 \displaystyle{\frac{1}{2 x_3} +
\frac{1}{4 d_3 } \biggl({3 \brack {33}}\frac{1}{x_{3}} +{3 \brack {(13)(13)}}  \frac{x_3}{{x_{13}}^2}+{3 \brack {(23)(23)}}  \frac{x_3}{{x_{23}}^2} \biggr)} \\ \\ & & 
 \displaystyle{- \frac{1}{2 d_3 } \biggl({3 \brack {33}}\frac{1}{x_{3}} +{(13) \brack {3(13)}}  \frac{1}{{x_{3}}}+{(23) \brack {3(23)}}  \frac{1}{{x_{3}}} \biggr),} 
\\  \\
r_{12} &= &  \displaystyle{\frac{1}{ 2 x_{12}} +\frac{1}{4 d_{12}}\biggl({(12) \brack {1 (12)}}  \frac{1}{x_{1}}\times 2 + {(12) \brack {2 (12)}}  \frac{1}{x_{2}} \times2+{(12) \brack {(13) (23)}}   \frac{x_{12}}{x_{13} x_{23} } \times 2 \biggr) } \\  \\ & & 
\displaystyle{-\frac{1}{ 2  d_{12}} \biggl( {1 \brack {(12) (12)}} \frac{x_1}{{x_{12}}^2} + {(12) \brack {(12) 1}} \frac{1}{x_{1}}+ {2 \brack {(12) (12)}}\frac{x_2}{{x_{12}}^2} +{(12) \brack {(12) 2}} \frac{1}{x_{2}} }  \\ \\ & & \displaystyle{+{(13) \brack {(12) (23)}}   \frac{x_{13}}{x_{12} x_{23} }+{(23) \brack {(12) (13)}}   \frac{x_{23}}{x_{12} x_{13} } \biggr) }, 
\\  \\  
r_{13}  &= &    \displaystyle{\frac{1}{ 2 x_{13}} +\frac{1}{4 d_{13}}\biggl({(13) \brack {1 (13)}}  \frac{1}{x_{1}}\times 2 + {(13) \brack {2 (13)}}  \frac{1}{x_{2}} \times2+{(13) \brack {(12) (23)}}   \frac{x_{13}}{x_{12} x_{23} } \times 2 \biggr) } \\  \\ & & 
\displaystyle{-\frac{1}{ 2  d_{13}} \biggl( {1 \brack {(13) (13)}} \frac{x_1}{{x_{13}}^2} + {(13) \brack {(13) 1}} \frac{1}{x_{1}}+ {3 \brack {(13) (13)}}\frac{x_3}{{x_{13}}^2} +{(13) \brack { (13) 3 }} \frac{1}{x_{3}} }  \\ \\ & & \displaystyle{+{(12) \brack {(13) (23)}}   \frac{x_{12}}{x_{13} x_{23} }+{(23) \brack {(13) (12)}}   \frac{x_{23}}{x_{13} x_{12} } \biggr) }, 
\end{array} }
\end{equation}
\begin{equation*}
\small{\begin{array}{lll} 
\\  \\
r_{23}  &= &    \displaystyle{\frac{1}{ 2 x_{23}} +\frac{1}{4 d_{23}}\biggl({(23) \brack {2 (23)}}  \frac{1}{x_{2}}\times 2 + {(23) \brack {3 (23)}}  \frac{1}{x_{3}} \times2+{(23) \brack {(12) (13)}}   \frac{x_{23}}{x_{12} x_{13} } \times 2 \biggr) } \\  \\ & & 
\displaystyle{-\frac{1}{ 2  d_{23}} \biggl( {2 \brack {(23) (23)}} \frac{x_2}{{x_{23}}^2} + {(23) \brack {(23) 2}} \frac{1}{x_{2}}+ {3 \brack {(23) (23)}}\frac{x_3}{{x_{23}}^2} +{(23) \brack {(23) 3}} \frac{1}{x_{3}} }  \\ \\ & & \displaystyle{+{(12) \brack {(23) (13)}}   \frac{x_{12}}{x_{23} x_{13} }+{(13) \brack {(23) (12)}}   \frac{x_{13}}{x_{23} x_{12} } \biggr) }, 
\end{array} }
\end{equation*}
where $n = k_1+k_2+k_3$. 
\end{prop}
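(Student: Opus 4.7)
The plan is to apply Lemma \ref{ric2} directly to the decomposition (\ref{decom_so(n)}) with $q=6$, viewing $\SO(n)$ through the diffeomorphism $\SO(n) \cong (\SO(n) \times K)/\diag(K)$ so that the left-invariant metric (\ref{metric001}) becomes an invariant metric of the form (\ref{eq2}) on this reductive presentation. The six components $r_1, r_2, r_3, r_{12}, r_{13}, r_{23}$ will then be evaluated one at a time by specializing the general formula (\ref{eq51}) and discarding all terms involving a vanishing structure constant.

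The central simplification is that, by Lemma \ref{brackets} together with the symmetry ${k \brack {ij}} = {k \brack {ji}} = {j \brack {ki}}$, the only nonzero triplets are those listed in (\ref{triplets}). For each $k \in \{1,2,3,12,13,23\}$ I would enumerate the ordered pairs $(i,j)$ producing a nonzero ${k \brack {ji}}$ (for the first sum in (\ref{eq51})) and a nonzero ${j \brack {ki}}$ (for the second sum), and read off the corresponding monomial in the metric coefficients. For $r_1, r_2, r_3$ the contributing triplets are ${k \brack {kk}}$ together with the couplings ${k \brack {(ij)(ij)}}$ to the off-diagonal modules containing $\fr{m}_k$, and each is picked up only once. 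For $r_{12}, r_{13}, r_{23}$ the contributing triplets are ${(ij) \brack {i(ij)}}$, ${(ij) \brack {j(ij)}}$ and the mixing triplet ${(12) \brack {(13)(23)}}$, each of which appears twice in the ordered summation $\sum_{i,j}$; this accounts for the explicit $\times 2$ factors visible in the stated formula.

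The calculation is then routine; the only bookkeeping subtlety is in the second sum $\sum_{i,j}\frac{x_j}{x_k x_i}{j \brack {ki}}$, where one must track which variable $x_j$ multiplies which triplet, and apply the symmetry ${j \brack {ki}} = {k \brack {ij}}$ to identify each term with one of the symbols in (\ref{triplets}). Once the six expansions are written out and collected, the expressions in (\ref{eq13}) follow directly. I do not anticipate any conceptual obstacle; the proposition is a combinatorial unpacking of Lemma \ref{ric2} under the bracket relations of Lemma \ref{brackets}, and the principal difficulty is simply the administrative one of enumerating contributions without missing or double-counting any.
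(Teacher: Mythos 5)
Your proposal is correct and follows exactly the same route the paper takes: the paper simply states that one applies Lemma \ref{ric2} using the list of nonzero triplets (\ref{triplets}) derived from Lemma \ref{brackets}, and your enumeration of ordered pairs $(j,i)$, the accounting of the $\times 2$ factors in the $r_{ij}$ components, and the use of the symmetry ${k\brack ij}={k\brack ji}={j\brack ki}$ are precisely the bookkeeping the paper leaves implicit.
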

%
%
%
%
We recall the following lemma from \cite{ADN1} (a  detailed proof was  given in \cite{ASS1}).

\begin{lemma}\label{lemma5.20} (\cite[Lemma 4.2]{ADN1})  For $a, b, c = 1, 2, 3$ and $(a - b)(b - c) (c - a) \neq 0$ the following relations hold:
\begin{equation}\label{eq14}
\begin{array}{lll} 
 \displaystyle{{a \brack {a a}} = \frac{k_a (k_a -1)(k_a -2)}{2 (n -2)} },   &  \displaystyle{{a \brack {(a b) (a b)}} = \frac{k_a  k_b (k_a -1)}{2 (n -2)} }, &  \displaystyle{{(a c) \brack {(a b ) (b c)}} = \frac{k_a  k_b  k_c}{2 (n -2)} }. 
\end{array} 
\end{equation}
\end{lemma}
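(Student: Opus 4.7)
The strategy is to exploit the explicit $(-B)$-orthogonal basis $\mathcal{B}=\{e_{ab}\}_{1\le a<b\le n}$ together with the bracket relations of Lemma \ref{brac}. Since the Killing form of $\mathfrak{so}(n)$ satisfies $B(X,Y)=(n-2)\tr(XY)$, a direct calculation gives $-B(e_{ab},e_{ab})=2(n-2)$, so the rescaled vectors $\tilde e_{ab}=e_{ab}/\sqrt{2(n-2)}$ form a genuine $(-B)$-orthonormal basis, adapted to the decomposition (\ref{decom_so(n)}) via the partition of the index set $\{1,\dots,n\}$ into consecutive blocks $I_1,I_2,I_3$ of sizes $k_1,k_2,k_3$. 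From Lemma \ref{brac} the only nontrivial brackets among the $\tilde e_{ab}$ take the form $[\tilde e_{ab},\tilde e_{bc}]=\frac{1}{\sqrt{2(n-2)}}\,\tilde e_{ac}$ (with the antisymmetric counterparts), so every nonzero structure constant $A^{\gamma}_{\alpha\beta}$ has square equal to $\frac{1}{2(n-2)}$. Consequently each triplet $\displaystyle{k\brack{ij}}$ equals $\frac{1}{2(n-2)}$ multiplied by the number of ordered triples $(\alpha,\beta,\gamma)$ whose basis vectors lie in the prescribed modules and produce a nonzero bracket.

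The remainder is pure combinatorics. For $\displaystyle{a\brack{aa}}$, each unordered triple $\{i,j,k\}\subset I_a$ yields the standard $\mathfrak{so}(3)$-triple $\{\tilde e_{ij},\tilde e_{jk},\tilde e_{ik}\}\subset \mathfrak{m}_a$, inside which exactly $3!=6$ ordered triples of pairwise distinct elements give nonzero brackets; this produces $6\binom{k_a}{3}\cdot\frac{1}{2(n-2)}=\frac{k_a(k_a-1)(k_a-2)}{2(n-2)}$. For $\displaystyle{a\brack{(ab)(ab)}}$, fix $\gamma=\tilde e_{i_1 i_2}\in\mathfrak{m}_a$ and observe that the only pairs $(\alpha,\beta)\in\mathfrak{m}_{ab}\times\mathfrak{m}_{ab}$ whose bracket has a $\tilde e_{i_1i_2}$-component are $(\tilde e_{i_1 j},\tilde e_{i_2 j})$ and its swap, for $j\in I_b$; summing over $\gamma$ yields $2k_b\binom{k_a}{2}=k_ak_b(k_a-1)$ ordered triples. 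For $\displaystyle{(ac)\brack{(ab)(bc)}}$, fixing $\gamma=\tilde e_{ik}\in\mathfrak{m}_{ac}$ with $i\in I_a,\ k\in I_c$, the only contributing pairs are $(\alpha,\beta)=(\tilde e_{ij},\tilde e_{jk})$ for $j\in I_b$, giving $k_a k_b k_c$ ordered triples in total.

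Multiplying each count by $\frac{1}{2(n-2)}$ produces the three formulas of Lemma \ref{lemma5.20}. The only technical care needed is to use the correctly normalized basis (hence the factor $\sqrt{2(n-2)}$) and to exclude spurious brackets that land in the \emph{wrong} module: for instance, in the computation of $\displaystyle{a\brack{(ab)(ab)}}$, the bracket of two elements $\tilde e_{ij},\tilde e_{ij'}\in\mathfrak{m}_{ab}$ that share the $I_a$-index lies in $\mathfrak{m}_b$ and must not be counted, while the pair that shares the $I_b$-index produces the desired element of $\mathfrak{m}_a$. Beyond this bookkeeping, the derivation is entirely routine, which is why the lemma can be quoted from \cite{ADN1} with the detailed combinatorial verification appearing in \cite{ASS1}.
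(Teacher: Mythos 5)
Your proof is correct. The paper itself does not prove this lemma: it quotes it from \cite{ADN1} and points to \cite{ASS1} for the detailed verification, so there is no in-paper argument to compare against. Your derivation is the standard one and presumably matches the cited source: from $B(X,Y)=(n-2)\tr(XY)$ on $\mathfrak{so}(n)$ one gets $-B(e_{ab},e_{ab})=2(n-2)$, so the genuine $(-B)$-orthonormal basis is $\tilde e_{ab}=e_{ab}/\sqrt{2(n-2)}$ and every nonzero structure constant has square $\frac{1}{2(n-2)}$; the combinatorial counts $k_a(k_a-1)(k_a-2)$, $k_a k_b(k_a-1)$, and $k_a k_b k_c$ of contributing ordered triples are right, including your explicit exclusion of the brackets landing in $\mathfrak{m}_b$ in the second computation. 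One small point worth recording: the paper's Section~3 loosely calls $\mathcal{B}=\{e_{ab}\}$ a $(-B)$-orthonormal basis, but it is only $(-B)$-orthogonal; your rescaling is the correct reading, and it is exactly what produces the $\frac{1}{2(n-2)}$ denominator in (\ref{eq14}).
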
 
 
 By using the above lemma, we can now obtain the components of the Ricci tensor for the metrics we are considering in this work.

 \begin{prop}\label{prop5.3}
The components  of  the Ricci tensor ${r}$ for the left-invariant metric $ \langle \   ,\   \rangle $ on $G$ defined by  {\em(\ref{metric001})} are given as follows:  
\begin{equation}\label{eq17}
\left. 
\small{\begin{array}{l} 
r_1 =  \displaystyle{\frac{k_1-2}{4 (n -2)  x_1} +
\frac{1}{4 (n -2) } \biggl(k_2 \frac{x_1}{{x_{12}}^2}} +k_3 \frac{x_1}{{x_{13}}^2} \biggr), 
 \\   \\
 r_2 = 
\displaystyle{\frac{k_2-2}{4 (n -2)  x_2} +
\frac{1}{4 (n -2)} \biggl(k_1 \frac{x_2}{{x_{12}}^2} +k_3 \frac{x_2}{{x_{23}}^2} \biggr),} 
\\  \\
 r_3 =  
\displaystyle{\frac{k_3-2}{4 (n -2)  x_3} +
\frac{1}{4 (n -2)} \biggl(k_1 \frac{x_3}{{x_{13}}^2} +k_2 \frac{x_3}{{x_{23}}^2} \biggr),} 
\\  \\
r_{12} =   \displaystyle{\frac{1}{ 2 x_{12}} +\frac{k_3}{4 (n -2)}\biggl(\frac{x_{12}}{x_{13} x_{23}} - \frac{x_{13}}{x_{12} x_{23}} - \frac{x_{23}}{x_{12} x_{13}}\biggr) }
\displaystyle{-
\frac{1}{4 (n -2)} \biggl( \frac{(k_1-1) x_1}{{x_{12}}^2} +  \frac{(k_2-1) x_2}{{x_{12}}^2} \biggr)},
\\  \\
r_{13}  =   \displaystyle{\frac{1}{  2 x_{13}} +\frac{k_2}{4 (n -2)}\biggl(\frac{x_{13}}{x_{12} x_{23}} - \frac{x_{12}}{x_{13} x_{23}} - \frac{x_{23}}{x_{12} x_{13}}\biggr)} 
\displaystyle{-
\frac{1}{4 (n -2)} \biggl( \frac{(k_1-1) x_1}{{x_{13}}^2} + \frac{(k_3-1) x_3}{{x_{13}}^2}  \biggr)}, 
\\  \\
r_{23}  =   \displaystyle{\frac{1}{ 2 x_{23}} +\frac{k_1}{4 (n -2)}\biggl(\frac{x_{23}}{x_{13} x_{12}} - \frac{x_{13}}{x_{12} x_{23}} - \frac{x_{12}}{x_{23} x_{13}}\biggr)}
\displaystyle{-
\frac{1}{4 (n -2)} \biggl( \frac{(k_2-1) x_2}{{x_{23}}^2} + \frac{(k_3-1) x_3}{{x_{23}}^2} \biggr)}, 
\end{array} } \right\}
\end{equation}
where $n = k_1+k_2+k_3$. 
\end{prop}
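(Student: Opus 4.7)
The strategy is direct substitution of the values of the non-zero triplets given by Lemma \ref{lemma5.20} into the general formulas of Proposition \ref{lemma5.1}, using the dimensions $d_a = k_a(k_a-1)/2$ for $a = 1,2,3$ and $d_{ab} = k_a k_b$, followed by simplification. The triplet symmetry $\displaystyle{k \brack ij} = \displaystyle{k \brack ji} = \displaystyle{j \brack ki}$ identifies, for instance, $\displaystyle{1 \brack (12)(12)} = \displaystyle{(12) \brack 1(12)}$, so every symbol appearing in (\ref{eq13}) corresponds to a single entry of (\ref{eq14}).

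For the component $r_1$, one notices that after substitution a common factor $\frac{1}{2(n-2)}$ emerges, because each triplet numerator contains $k_1(k_1-1)$, which exactly cancels the $2 d_1 = k_1(k_1-1)$ in the denominators. The three terms of the negative part are proportional to $\frac{1}{x_1}$ with coefficients $k_1 - 2$, $k_2$, and $k_3$; since $n - 2 = (k_1-2) + k_2 + k_3$, their total contribution is $-\frac{1}{2x_1}$, which cancels exactly the leading $\frac{1}{2x_1}$. What remains is precisely the stated expression for $r_1$, and the formulas for $r_2$ and $r_3$ follow by the evident permutation of the indices $\{1,2,3\}$ and of $\{x_{12}, x_{13}, x_{23}\}$.

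For the off-diagonal component $r_{12}$, the substitution yields an overall factor $\frac{1}{4(n-2)}$ once $4 d_{12} = 4 k_1 k_2$ cancels against the triplet numerators $k_1 k_2 (k_a - 1)$ or $k_1 k_2 k_3$. The term $\frac{k_1 - 1}{x_1}$ appearing in the positive part (from $\frac{2 [(12)/1(12)]}{x_1}$ divided by $4 d_{12}$) exactly cancels the corresponding term in the negative part (from $\frac{[(12)/(12)1]}{x_1}$ divided by $2 d_{12}$); the analogous cancellation occurs for $\frac{k_2 - 1}{x_2}$. The surviving terms have either $x_{12}^2$ in the denominator or are cross terms from $\displaystyle{(13) \brack (12)(23)}$; regrouping the latter produces the factor $k_3$ and the combination
\[
\frac{x_{12}}{x_{13} x_{23}} - \frac{x_{13}}{x_{12} x_{23}} - \frac{x_{23}}{x_{12} x_{13}},
\]
which gives the asserted formula. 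The expressions for $r_{13}$ and $r_{23}$ then follow by the symmetric relabelings of $(1,2,3)$.

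The calculation is entirely mechanical and requires no new idea beyond Lemma \ref{lemma5.20} and the identity $n-2 = (k_1-2) + k_2 + k_3$. The principal obstacle is bookkeeping: one must correctly apply the triplet symmetries and respect the factors of $2$ appearing in (\ref{eq13}) (for example the ``$\times 2$'' multipliers in the positive part of $r_{12}$), which are crucial for the cancellations described above to go through.
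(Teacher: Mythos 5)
Your proposal is correct and follows exactly the approach the paper intends (and merely states as "By using the above lemma, we can now obtain..." without writing out the details): substitute the triplet values from Lemma \ref{lemma5.20} into the general formulas of Proposition \ref{lemma5.1}, using $d_a = k_a(k_a-1)/2$, $d_{ab}=k_ak_b$, and the symmetry $\displaystyle{k\brack ij}=\displaystyle{j\brack ki}$, and then observe the cancellations you describe. In particular, you correctly identify the two essential points: for $r_a$ the identity $n-2=(k_a-2)+k_b+k_c$ makes the negative part of Lemma \ref{lemma5.1} cancel the leading $\tfrac{1}{2x_a}$, and for $r_{ab}$ the $\tfrac{k_a-1}{x_a}$ and $\tfrac{k_b-1}{x_b}$ contributions from the positive and negative parts cancel, leaving exactly the stated expressions.
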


For  the  $\Ad( \SO(k_1)\times\SO(k_2))$-invariant metrics on $G$ of the form 
 (\ref{metric002}), 
   Proposition \ref{prop5.3} reduces to

\begin{prop}\label{prop5.4}
The components  of  the Ricci tensor ${r}$ for the left-invariant metric $ \langle \   ,\    \rangle $ on $G$ defined by  {\em(\ref{metric002})}  are given as follows:  
\begin{equation}\label{eq17}
\left. 
\small{\begin{array}{l} 
r_1 = \displaystyle{\frac{k_1-2}{4 (n -2)  x_1} +
\frac{1}{4 (n -2) } \biggl(k_2 \frac{x_1}{{x_{12}}^2}} + \frac{x_1}{{x_{13}}^2} \biggr), 
 \\   \\
 r_2 = 
\displaystyle{\frac{k_2-2}{4 (n -2)  x_2} +
\frac{1}{4 (n -2)} \biggl(k_1 \frac{x_2}{{x_{12}}^2} + \frac{x_2}{{x_{23}}^2} \biggr),} 
\\  \\
r_{12} =  \displaystyle{\frac{1}{ 2 x_{12}} +\frac{1}{4 (n -2)}\biggl(\frac{x_{12}}{x_{13} x_{23}} - \frac{x_{13}}{x_{12} x_{23}} - \frac{x_{23}}{x_{12} x_{13}} -
  \frac{(k_1-1) x_1}{{x_{12}}^2} - \frac{ (k_2-1) x_2}{{x_{12}}^2} \biggr)},
\\  \\
r_{13}  =  \displaystyle{\frac{1}{  2 x_{13}} +\frac{1}{4 (n -2)}\biggl({k_2} \left(\frac{x_{13}}{x_{12} x_{23}} - \frac{x_{12}}{x_{13} x_{23}} - \frac{x_{23}}{x_{12} x_{13}}  \right)  
-   \frac{ (k_1-1) x_1}{{x_{13}}^2} \biggr)}, 
\\  \\
r_{23}  =  \displaystyle{\frac{1}{ 2 x_{23}} +\frac{1}{4 (n -2)}\biggl(k_1\left(\frac{x_{23}}{x_{13} x_{12}} - \frac{x_{13}}{x_{12} x_{23}} - \frac{x_{12}}{x_{23} x_{13}}\right)   
 -    \frac{ (k_2-1) x_2}{{x_{23}}^2} \biggr)}, 
\end{array} } \right\}
\end{equation}
where $n = k_1+k_2+1$. 
\end{prop}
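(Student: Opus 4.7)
The plan is to derive Proposition \ref{prop5.4} as a direct specialization of Proposition \ref{prop5.3} to the case $k_3 = 1$, since $\SO(1)$ is trivial and hence the module $\fr{m}_3 = \fr{so}(1) = 0$ disappears from the decomposition (\ref{decom_so(n)}). Concretely, I would revisit the general Ricci formula (\ref{eq51}) of Lemma \ref{ric2}, applied to the metric (\ref{metric002}), taking into account the reduced list of non-zero triplets stated at the end of Section 3:
\[
{1 \brack {11}},\ {2 \brack {22}},\ {(12) \brack {1(12)}},\ {(13) \brack {1(13)}},\ {(12) \brack {2(12)}},\ {(23) \brack {2(23)}},\ {(13) \brack {(12)(23)}}.
\]
The triplets ${3 \brack {33}}$, ${3 \brack {(13)(13)}}$, ${3 \brack {(23)(23)}}$, ${(13) \brack {3(13)}}$, ${(23) \brack {3(23)}}$ that appeared in the metric (\ref{metric001}) are now absent, either because they involve the zero module $\fr{m}_3$ or because, by Lemma \ref{lemma5.20}, each contains a factor of $k_3(k_3-1)$ or $k_3(k_3-1)(k_3-2)$ and therefore vanishes once $k_3 = 1$.

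First I would substitute these simplifications into the expressions for $r_1, r_2, r_{12}, r_{13}, r_{23}$ given in Proposition \ref{lemma5.1}. The component $r_3$ no longer appears since $\fr{m}_3 = 0$. For $r_1$ and $r_2$, the only effect of setting $k_3 = 1$ in the formulas of Proposition \ref{prop5.3} is to replace the coefficient $k_3$ of $x_1/x_{13}^2$ and $k_3$ of $x_2/x_{23}^2$ by $1$. For $r_{13}$ and $r_{23}$, the terms $(k_3-1)x_3/x_{13}^2$ and $(k_3-1)x_3/x_{23}^2$ vanish, which accounts for the disappearance of $x_3$ from those expressions. Finally, in $r_{12}$ nothing changes beyond replacing the coefficient $k_3$ by $1$ in the mixed term originating from the triplet ${(12) \brack {(13)(23)}}$.

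The remaining step is purely algebraic: one collects coefficients of each monomial in the variables $x_i, x_{ij}$ and verifies that the formulas coincide, term for term, with those stated in Proposition \ref{prop5.4}. The identity $n = k_1 + k_2 + 1$ ensures that the overall normalization $1/(4(n-2))$ provided by Lemma \ref{lemma5.20} is preserved under the reduction.

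I do not expect any genuine obstacle: the argument is bookkeeping, and the only point requiring a small amount of care is confirming that every triplet involving the trivial factor $\fr{so}(1)$ is indeed zero and that no new triplet becomes active when the module $\fr{m}_3$ is removed. Once that is verified, Proposition \ref{prop5.4} is immediate from Proposition \ref{prop5.3} by setting $k_3 = 1$ and suppressing the (now meaningless) variable $x_3$ and component $r_3$.
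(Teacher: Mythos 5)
Your proposal is correct and is essentially identical to the paper's: the paper explicitly presents Proposition~\ref{prop5.4} as the specialization of Proposition~\ref{prop5.3} to the case $k_3 = 1$, exactly as you argue. The verification that setting $k_3=1$ kills the triplets ${3 \brack 33}$, ${3 \brack (13)(13)}$, ${3 \brack (23)(23)}$ (via the factor $k_3-1$ in Lemma~\ref{lemma5.20}) and suppresses $r_3$ and $x_3$ is the same bookkeeping the paper leaves implicit.
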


Finally, for  the  $\Ad( \SO(k_1))$-invariant metrics on $G$ of the form 
 (\ref{metric003}), 
   Proposition \ref{prop5.3} reduces to

\begin{prop}\label{prop5.5}
The components  of  the Ricci tensor ${r}$ for the left-invariant metric $ \langle \  \ ,\ \ \rangle $ on $G$ defined by  {\em(\ref{metric003})}  are given as follows:  
\begin{equation}\label{eq17}
\left. 
\small{\begin{array}{l} 
r_1 = \displaystyle{\frac{n-4}{4 (n -2)  x_1} +
\frac{1}{4 (n -2) } \biggl( \frac{x_1}{{x_{12}}^2}} + \frac{x_1}{{x_{13}}^2} \biggr), 
 \\   \\
r_{12} =  \displaystyle{\frac{1}{ 2 x_{12}} +\frac{1}{4 (n -2)}\biggl(\frac{x_{12}}{x_{13} x_{23}} - \frac{x_{13}}{x_{12} x_{23}} - \frac{x_{23}}{x_{12} x_{13}} -
  \frac{(n-3) x_1}{{x_{12}}^2}  \biggr)},
\\  \\
r_{13}  =  \displaystyle{\frac{1}{  2 x_{13}} 
  +\frac{1}{4 (n -2)}\biggl( \frac{x_{13}}{x_{12} x_{23}} - \frac{x_{12}}{x_{13} x_{23}} - \frac{x_{23}}{x_{12} x_{13}}   
-   \frac{ (n-3) x_1}{{x_{13}}^2} \biggr)}, 
\\  \\
r_{23}  =  \displaystyle{\frac{1}{ 2 x_{23}} 
+\frac{1}{4 }\left(\frac{x_{23}}{x_{13} x_{12}} - \frac{x_{13}}{x_{12} x_{23}} - \frac{x_{12}}{x_{23} x_{13}}\right)   
}, 
\end{array} } \right\}
\end{equation}
where $n = k_1+2$. 
\end{prop}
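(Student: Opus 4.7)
The plan is to derive the stated formulas by specializing Proposition \ref{prop5.3} to the partition $k_1=n-2$, $k_2=k_3=1$. However, there is a subtlety absent from Propositions \ref{prop5.3} and \ref{prop5.4}: when $k_2=k_3=1$ the submodules $\fr{m}_{12}$ and $\fr{m}_{13}$ are \emph{equivalent} as $\Ad(\SO(n-2))$-representations, both being copies of the standard representation on $\bb{R}^{n-2}$. Lemma \ref{ric2} therefore cannot be invoked blindly, since its derivation uses pairwise non-equivalence to guarantee a diagonal Ricci tensor.

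The core step is to verify directly that the Ricci tensor $r$ of the metric (\ref{metric003}) is diagonal with respect to the decomposition $\fr{so}(n)=\fr{so}(n-2)\oplus\fr{m}_{12}\oplus\fr{m}_{13}\oplus\fr{m}_{23}$. The cross terms $r(\fr{so}(n-2),\fr{m}_{ij})$ and $r(\fr{m}_{12},\fr{m}_{23})$, $r(\fr{m}_{13},\fr{m}_{23})$ vanish by Schur's lemma, since the pairs of modules involved are non-equivalent (adjoint, standard, trivial). For the delicate component $r(\fr{m}_{12},\fr{m}_{13})$ I would invoke the involution $\sigma=\Ad(P)$ with $P=\diag(I_{n-2},-1,1)\in\OOO(n)$. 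A short matrix computation via $(PXP^{-1})_{bc}=P_{bb}P_{cc}^{-1}X_{bc}$ shows that $\sigma$ acts as $+1$ on $\fr{so}(n-2)$ and on $\fr{m}_{13}$, and as $-1$ on $\fr{m}_{12}$ and $\fr{m}_{23}$. Hence $\sigma$ preserves every summand and therefore the metric $\langle\,,\,\rangle$; being a Lie algebra automorphism of $\fr{so}(n)$, it also preserves $r$. For $X\in\fr{m}_{12}$, $Y\in\fr{m}_{13}$ this gives $r(X,Y)=r(\sigma X,\sigma Y)=r(-X,Y)=-r(X,Y)$, forcing $r(X,Y)=0$.

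Once diagonality is in hand, the entries $r_1,r_{12},r_{13},r_{23}$ follow from the Ricci formula of Lemma \ref{ric2}, which remains valid for the diagonal components. With $k_1=n-2$, $k_2=k_3=1$ the only surviving triplets (up to the permutation $[k/ij]=[j/ki]$) reduce via Lemma \ref{lemma5.20} to $\displaystyle{1 \brack {11}}$, $\displaystyle{1 \brack {(12)(12)}}$, $\displaystyle{1 \brack {(13)(13)}}$ and $\displaystyle{(13) \brack {(12)(23)}}$, all others vanishing because $k_2-1=k_3-1=0$ and $k_a-2<0$ for $a=2,3$. Substituting into (\ref{eq13}) and collecting terms yields the stated expressions; equivalently, one reads them off directly from Proposition \ref{prop5.3} by formally setting $k_2=k_3=1$ and discarding the $x_2,x_3$ variables. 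The main obstacle is the diagonality verification: Schur's lemma would otherwise permit a one-parameter family of $\Ad(\SO(n-2))$-equivariant bilinear pairings between the equivalent modules $\fr{m}_{12}$ and $\fr{m}_{13}$, and it is only the discrete symmetry $\sigma$ (distinguishing the two modules by sign) that forces the corresponding off-diagonal Ricci component to vanish; the remaining bookkeeping is routine.
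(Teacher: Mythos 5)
Your argument is correct, and you have rightly flagged the delicate point that the paper itself addresses separately: in the partition $(n-2,1,1)$ the modules $\fr{m}_{12}$ and $\fr{m}_{13}$ are both equivalent to the standard $\Ad(\SO(n-2))$-representation, so Lemma~\ref{ric2} cannot be invoked until the diagonality $r(\fr{m}_{12},\fr{m}_{13})=0$ is established. Once that is done, the displayed formulas do follow from Proposition~\ref{prop5.3} by setting $k_1=n-2$, $k_2=k_3=1$, with the $(k_a-1)x_a$ terms for $a=2,3$ dropping out, exactly as you say. Where you diverge from the paper is in the proof of the off-diagonal vanishing, which the paper handles in a separate lemma in Section~7. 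There the authors use the polarized form of Besse's Ricci formula, observe that the Killing-form term drops because $\fr{m}_{12}$ and $\fr{m}_{13}$ are $(-B)$-orthogonal, and then kill the two remaining sums by an explicit case analysis of the brackets $[e_{i,n-1},e_{ab}]$, $[e_{jn},e_{ab}]$ and $[e_{ab},e_{cd}]$ in the basis $\{e_{ab}\}$. Your route is structurally different: the involution $\sigma=\Ad(\diag(I_{n-2},-1,1))$ is a Lie algebra automorphism of $\fr{so}(n)$ preserving the inner product (\ref{metric003}) and acting as $-1$ on $\fr{m}_{12}\oplus\fr{m}_{23}$ and $+1$ on $\fr{so}(n-2)\oplus\fr{m}_{13}$, so $r(X,Y)=r(\sigma X,\sigma Y)=-r(X,Y)$ for $X\in\fr{m}_{12}$, $Y\in\fr{m}_{13}$. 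This is shorter and more conceptual, replacing the bracket casework with a single symmetry observation, at the modest cost of having to note that a metric-preserving Lie algebra automorphism preserves the Ricci form (immediate from the very Besse formula the paper invokes, since every term in it is covariant under such automorphisms). Both proofs are valid; yours also makes transparent why the vanishing persists for any invariant metric that is diagonal in this decomposition.
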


\section{Left-invariant Einstein metrics on $\SO(n)$} 

In the present section we provide detailed proofs on how to obtain left-invariant Einstein metrics which are not naturally reductive for the Lie groups $\SO(7)$, $\SO(8)$ and $\SO(n)$, $n\ge 9$. 
For $\SO(7)$ and  $\SO(8)$  we also describe left-invariant Einstein metrics which are naturally reductive.  The other compact Lie groups $\SO(n)=\SO(k_1+k_2+k_3)$  for $n\ge 7$ and for all possible  
$\Ad( \SO(k_1)\times\SO(k_2)\times\SO(k_3))$-invariant  metrics of the forms (\ref{metric001}) and (\ref{metric002})
can be treated in an analogous manner and we omit the proofs.  We summarise all the results at the end of Section 7.  Here, we provide information about solving or proving existence of solutions of algebraic systems of equations.  
These solutions correspond to Einstein metrics which are not naturally reductive. 

\begin{prop}\label{prop1}
The Lie group  $\SO(7)$ admits at least one left-invariant Einstein metric determined by the
 $\Ad( \SO(3)\times\SO(3))$-invariant scalar product of the form {\rm (\ref{metric002})}, which is not naturally reductive. 
\end{prop}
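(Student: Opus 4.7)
The plan is to set up the Einstein system explicitly and solve it via Gr\"obner-basis elimination. First, I specialize Proposition \ref{prop5.4} to $k_1=k_2=3$, $n=7$, obtaining the five Ricci components $r_1,r_2,r_{12},r_{13},r_{23}$ as explicit rational functions of $(x_1,x_2,x_{12},x_{13},x_{23})$. Using the scale invariance of the Einstein condition I normalize $x_{12}=1$ and write the four equations $r_1=r_2=r_{12}=r_{13}=r_{23}$. Clearing denominators gives a polynomial system in $\bb{Q}[x_1,x_2,x_{13},x_{23}]$, and one should first note that since $\fr{m}_{12},\fr{m}_{13},\fr{m}_{23}$ are pairwise inequivalent as $\Ad(\SO(3)\times\SO(3))$-modules, no off-diagonal Ricci conditions need to be imposed.

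Second, I would compute a lexicographic Gr\"obner basis of this ideal. The basis contains a univariate elimination polynomial $p(t)$ in one of the variables (say $x_{13}$), together with relations expressing the remaining unknowns rationally in terms of $t$. By Proposition \ref{prop5.2}, the naturally reductive metrics in this class satisfy one of four explicit algebraic conditions, each carving out a subvariety whose contribution is a recognizable factor of $p(t)$. Dividing out these factors leaves a residual polynomial $q(t)$ whose roots correspond to candidate non-naturally-reductive Einstein metrics. To finish, I would exhibit a positive real root $t_0$ of $q$ whose back-substituted coordinates are all strictly positive and violate each of the four NR relations of Proposition \ref{prop5.2}; existence can be certified by Sturm's theorem on $q$, and positivity by sign analysis of the rational back-substitution formulas.

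A useful orienting observation is the $\bb{Z}/2$-symmetry $(x_1,x_{13})\leftrightarrow(x_2,x_{23})$ coming from the swap of the two $\SO(3)$ factors. In the symmetric subspace $x_1=x_2=a$, $x_{13}=x_{23}=b$ (still with $x_{12}=1$) the Einstein system reduces to two equations in $(a,b)$; the first factors as $(a-1)\bigl((7a-1)b^2+a\bigr)=0$. The branch $a=1$ is case 1) of Proposition \ref{prop5.2}; the other branch forces $b^2=a/(1-7a)$ with $a\in(0,1/7)$, and substitution into the remaining equation yields the quartic
\[
81\,a^4 + 162\,a^3 + 220\,a^2 - 61\,a + 4 = 0,
\]
which one checks is strictly positive on $(0,1/7)$. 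Hence any non-NR Einstein metric of this form must \emph{break} the $\bb{Z}/2$-symmetry, and such solutions occur in $\bb{Z}/2$-related pairs, so it suffices to exhibit one.

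The main obstacle is therefore the full four-variable Gr\"obner basis computation and the real-root certification: the lex basis may be large, $p(t)$ may be of high degree, and one must both isolate the factors corresponding to the four NR loci and rigorously guarantee a real root of $q(t)$ whose back-substitution produces all positive values. Once such a root is in hand, verifying that it does not satisfy any of the relations in Proposition \ref{prop5.2} is immediate from the explicit back-substitution formulas.
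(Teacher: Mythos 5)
Your plan follows the paper's proof essentially step for step: specialize Proposition \ref{prop5.4} with $k_1=k_2=3$, $k_3=1$, normalize one scale variable (you pick $x_{12}=1$, the paper picks $x_{23}=1$ --- a cosmetic difference), compute a lex Gr\"obner basis, factor the univariate eliminant so as to separate the naturally reductive loci of Proposition \ref{prop5.2} from a residual polynomial, and certify that the latter has a positive real root with all back-substituted coordinates positive. Your $\bb{Z}/2$-symmetry observation (the quartic $81a^4+162a^3+220a^2-61a+4>0$ on $(0,1/7)$, which checks out) is a welcome complement not present in the paper --- it explains a priori why the two positive roots of the residual factor must yield a single Einstein metric up to isometry --- but it supplements rather than replaces the Gr\"obner-basis existence argument, which you correctly identify as the step still to be carried out.
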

 
\begin{proof} 
 This is the case when  $k_1= k_2 =3$ and $k_3=1$. 
  From Proposition \ref{prop5.4}, we see that  the components  of  the Ricci tensor ${r}$ for the invariant metric are given by 
 \begin{equation}\label{eq331}
\left. 
\small{\begin{array}{lll} 
r_1 &= & \displaystyle{\frac{1}{20 x_1} +
\frac{1}{20} \biggl( 3 \frac{x_1}{{x_{12}}^2}} + \frac{x_1}{{x_{13}}^2} \biggr),  \quad 
 r_2   \, \, \, =  \, \, \, 
\displaystyle{\frac{1}{20 x_2} +
\frac{1}{20} \biggl( 3 \frac{x_2}{{x_{12}}^2} +  \frac{x_2}{{x_{23}}^2} \biggr),} 
\\  \\
r_{12} &= &  \displaystyle{\frac{1}{ 2 x_{12}} +\frac{1}{20}\biggl(\frac{x_{12}}{x_{13} x_{23}} - \frac{x_{13}}{x_{12} x_{23}} - \frac{x_{23}}{x_{12} x_{13}}\biggr) } 
\displaystyle{-
\frac{1}{10}  \biggl(  \frac{x_1}{{x_{12}}^2}+  \frac{x_2}{{x_{12}}^2} \biggr)},
\\  \\
r_{23}  &= &  \displaystyle{\frac{1}{ 2 x_{23}} +\frac{3}{20}\biggl(\frac{x_{23}}{x_{13} x_{12}} - \frac{x_{13}}{x_{12} x_{23}} - \frac{x_{12}}{x_{23} x_{13}}\biggr) -
\frac{1}{10}   \frac{x_2}{{x_{23}}^2}},
\\  \\
r_{13}  &= &  \displaystyle{\frac{1}{  2 x_{13}} +\frac{3}{20}\biggl(\frac{x_{13}}{x_{12} x_{23}} - \frac{x_{12}}{x_{13} x_{23}} - \frac{x_{23}}{x_{12} x_{13}}\biggr)  -
\frac{1}{10}   \frac{x_1}{{x_{13}}^2}}.  
\end{array} } \right\}
\end{equation}

 We consider the system of equations   
 \begin{equation}\label{eq331a} 
 r_1 = r_2, \, \, r_2 =  r_{12}, \, \, r_{12} = r_{23}, \,\,  r_{23} = r_{13}. 
 \end{equation}
Then finding Einstein metrics of the form (\ref{metric002})  reduces  to finding the positive solutions of system (\ref{eq331a}),  and  we normalize  our equations by putting $x_{23}=1$. Then we obtain the system of equations: 
 \begin{equation}\label{eq331_2a} 
\left. { \begin{array}{l}
g_1={x_{1}}^2 {x_{12}}^2 {x_{2}}+3 {x_{1}}^2 {x_{13}}^2 {x_{2}}-{x_{1}} {x_{12}}^2 {x_{13}}^2 {x_{2}}^2-{x_{1}} {x_{12}}^2 {x_{13}}^2 \\
 \quad  \quad  -3 {x_{1}}{x_{13}}^2 {x_{2}}^2+{x_{12}}^2 {x_{13}}^2 {x_{2}} = 0,  \\ 
g_2=  2{x_{1}} {x_{13}} {x_{2}} - {x_{12}}^3 {x_{2}} +{x_{12}}^2 {x_{13}} {x_{2}}^2+{x_{12}}^2
   {x_{13}}+{x_{12}} {x_{13}}^2 {x_{2}} \\
    \quad  \quad -10 {x_{12}} {x_{13}} {x_{2}}+{x_{12}} {x_{2}}+5 {x_{13}}{x_{2}}^2= 0, \\ 
g_3= -{x_{1}} {x_{13}}+2 {x_{12}}^3+{x_{12}}^2 {x_{13}} {x_{2}}-5 {x_{12}}^2 {x_{13}}+{x_{12}}
   {x_{13}}^2 \\
    \quad  \quad +5 {x_{12}} {x_{13}}-2 {x_{12}}-{x_{13}}{x_{2}} = 0, \\
 g_4= {x_{1}} {x_{12}}-{x_{12}} {x_{13}}^2 {x_{2}}+5 {x_{12}} {x_{13}}^2-5 {x_{12}} {x_{13}}-3
   {x_{13}}^3+3 {x_{13}}=0. 
\end{array} } \right\} 
\end{equation}

   We consider a polynomial ring $R= {\mathbb Q}[z, x_1, x_2, x_{12}, x_{13}] $ and an ideal $I$ generated by 
$\{ g_1, \, g_2, $  $ g_3, g_4,  \,z \,x_1 \, x_2 \, x_{12} \, x_{13} -1\}  
$  to find non zero solutions of equations (\ref{eq331_2a}). 
We take a lexicographic order $>$  with $ z > x_1 >  x_2 > x_{12} > x_{13}$ for a monomial ordering on $R$. Then, by the aid of computer, we see that a  Gr\"obner basis for the ideal $I$ contains the  polynomial
$$({x_{13}}-1) \left(6 {x_{13}}^3-44 {x_{13}}^2+90
   {x_{13}}-45\right) \left(45 {x_{13}}^3-90 {x_{13}}^2+44
   {x_{13}}-6\right) \, h_{1}(x_{13}),$$
where $h_{1}(x_{13})$ is a polynomial of   $x_{13}$  given by 
\begin{eqnarray*}  & & 
h_{1}(x_{13}) = 9078544800000
   {x_{13}}^{24}-87978150000000 {x_{13}}^{23}+416122213455000
   {x_{13}}^{22} \\  & & -1222223075437500
   {x_{13}}^{21}+2532878590309970
   {x_{13}}^{20}-4171390831990050
   {x_{13}}^{19} \\  & & +5900094406718764
   {x_{13}}^{18}-7070644584919459
   {x_{13}}^{17}+6230617318198202
   {x_{13}}^{16} \\  & & -4091340309226802
   {x_{13}}^{15}+1722695469975774
   {x_{13}}^{14}+983550542994755
   {x_{13}}^{13} \\  & & -2624020500593532
   {x_{13}}^{12}+983550542994755
   {x_{13}}^{11}+1722695469975774
   {x_{13}}^{10} \\  & & -4091340309226802 {x_{13}}^9+6230617318198202
   {x_{13}}^8-7070644584919459 {x_{13}}^7 \\  & & +5900094406718764
   {x_{13}}^6-4171390831990050 {x_{13}}^5+2532878590309970
   {x_{13}}^4 \\  & & -1222223075437500 {x_{13}}^3+416122213455000
   {x_{13}}^2-87978150000000 {x_{13}}
   \\  & & +9078544800000. 
\end{eqnarray*}
By solving the equation $ h_{1}(x_{13})=0$ numerically, we obtain {\it two} positive solutions $x_{13}= a_{13}$ and $x_{13}= b_{13}$ which are given approximately as
 $ a_{13} \approx 0.4254295, \, \,  b_{13} \approx 2.350565. $
 We also see that the Gr\"obner basis for the ideal $I$ contains the polynomials 
 $$x_{12} - w_{12}(x_{13}), \quad x_1 -w_1 (x_{13}), \quad x_2 -w_2 (x_{13}), $$
 where $ w_{12}(x_{13})$, $ w_{1}(x_{13})$ and $ w_{2}(x_{13})$ are polynomials of $x_{13}$  with rational coefficients. By substituting the values $ a_{13}$ and  $b_{13}$ for $x_{13}$  into $w_{12}(x_{13})$, $ w_{1}(x_{13})$ and $w_{2}(x_{13})$, we obtain two positive solutions of the system of equations $\{ g_1=0, g_2=0, g_3=0,  g_4=0  \}$ approximately as
 $$( x_{13}, x_{12}, x_1,  x_2)  \approx ( 0.4254295, \,0.9312204, \, 0.1200109, \, 0.1122291 ), $$  
 $$( x_{13}, x_{12},  x_1,  x_2)  \approx ( 2.350565, \, 2.188895, \, 0.2638018, \, 0.2820935). 
 $$
We substitute these  values into the system   (\ref{eq331}) together with $x_{23} =1$. Then we obtain that 
$r_1 = r_2 =r_{12} = r_{23} = r_{13} \approx  0. 470542$ and  $r_1 = r_2 =r_{12} = r_{23} = r_{13} \approx  0.200182$ respectively. We multiply these  solutions by a scale factor and we obtain the two solutions 
 $$( x_1,  x_2, x_{12}, x_{23}, x_{13})  \approx ( 0.0564701, \, 0.0528085, \, 0.438178, \, 0.470542, \, 0.20018 ), $$  
 $$(  x_1,  x_2, x_{12}, x_{23}, x_{13})  \approx ( 0.0528085, \, 0.0564701, \, 0.438178, \, 0.20018, \, 0.470542). 
 $$
for the system of equations  
$$
r_1 = r_2 =r_{12} = r_{23} = r_{13}  =1,
$$ 
so we see that {\it these two solutions are isometric}.  Note that this metric is not naturally reductive from  Proposition \ref{prop5.2}. 

Now we consider the case $$({x_{13}}-1) \left(6 {x_{13}}^3-44 {x_{13}}^2+90
   {x_{13}}-45\right) \left(45 {x_{13}}^3-90 {x_{13}}^2+44
   {x_{13}}-6\right) =0. $$
   
  We   consider the ideals $J_1$ generated by 
$\{ g_1, \, g_2, $  $ g_3, g_4,  \,z \,x_1 \, x_2 \, x_{12} \, x_{13} -1, 6 {x_{13}}^3-44 {x_{13}}^2+90
   {x_{13}}-45 \}  
$,  $J_2$ generated by 
$\{ g_1, \, g_2, $  $ g_3, g_4,  \,z \,x_1 \, x_2 \, x_{12} \, x_{13} -1,   45 {x_{13}}^3-90 {x_{13}}^2+44
   {x_{13}}-6  \}$ and  $J_3$
generated by 
$\{ g_1, \, g_2, $  $ g_3, g_4,  \,z \,x_1 \, x_2 \, x_{12} \, x_{13} -1, {x_{13}}-1 \}$ of the  polynomial ring $R= {\mathbb Q}[z, x_1, x_2, x_{12}, x_{13}] $. 

We take a lexicographic order $>$  with $ z > x_1 >  x_2 > x_{12} > x_{13}$ for a monomial ordering on  $R= {\mathbb Q}[z, x_1, x_2, x_{12}, x_{13}] $. Then, by the aid of computer, we see that a  Gr\"obner basis for the ideal $J_1$ is given by 
\begin{eqnarray*}  & &  \{6 {x_{13}}^3-44 {x_{13}}^2+90 {x_{13}}-45, x_{12}-{x_{13}},x_{2}-1, x_{1}+{x_{13}}^2-5 {x_{13}}+3, \\
& & -804 {x_{13}}^2+5284 {x_{13}}+405 z-8112 \}.   
   \end{eqnarray*}   
  By solving the equation $6 {x_{13}}^3-44 {x_{13}}^2+90 {x_{13}}-45=0$ numerically, we obtain three positive solutions of the system of equations $\{ g_1=0, g_2=0, g_3=0,  g_4=0, x_{13} -1, 6 {x_{13}}^3-44 {x_{13}}^2+90 {x_{13}}-45=0  \}$ approximately as
\begin{eqnarray*}  & & ( x_{13}, x_{12}, x_1,  x_2, x_{23} )  \approx ( 4.16278, \,4.16278, \, 0.485171, \,  1, \,  1 ), \\   
& & ( x_{13}, x_{12}, x_1,  x_2, x_{23} )  \approx ( 2.42874, \,2.42874, \, 3.24492, \,  1, \,  1 ), \\  
& & ( x_{13}, x_{12},  x_1,  x_2, x_{23} )  \approx ( 0.741818, \, 0.741818, \, 0.158797, \, 1, \,  1 ).     \end{eqnarray*}   
 We substitute these  values into the system   (\ref{eq331}). Then we obtain that 
$r_1 = r_2 =r_{12} = r_{23} = r_{13} \approx  0.108656$, $r_1 = r_2 =r_{12} = r_{23} = r_{13} \approx  0.125429$ and  $r_1 = r_2 =r_{12} = r_{23} = r_{13} \approx 0.372581$ respectively. We multiply these  solutions by a scale factor and we obtain three solutions 
\begin{eqnarray*}  & & ( x_{13}, x_{12}, x_1,  x_2, x_{23} )  \approx ( 0.452311, \,0.452311, \, 0.0527168, \,  0.108656, \,  0.108656 ), \\   
& & ( x_{13}, x_{12}, x_1,  x_2, x_{23} )  \approx ( 0.304634, \, 0.304634, \, 0.407007, \,  0.125429, \,  0.125429 ), \\   
& & ( x_{13}, x_{12},  x_1,  x_2, x_{23} )  \approx ( 0.276388, \, 0.276388, \, 0.0591647, \, 0.372581, \,  0.372581 ). 
\end{eqnarray*}  
for the system of equations  
$$
r_1 = r_2 =r_{12} = r_{23} = r_{13}  =1,
$$

 We  also see that a  Gr\"obner basis for the ideal $J_2$ is given by 
 \begin{eqnarray*}  & &  \{ 45 {x_{13}}^3-90 {x_{13}}^2+44 {x_{13}}-6,  x_{12}-1,45 {x_{13}}^2-72 {x_{13}}+6 {x_{2}}+14, {x_{1}}-{x_{13}}, \\
& & -4545 {x_{13}}^2+8010 {x_{13}}+6 z-2554 \}.   
   \end{eqnarray*}   
 By solving the equation $45 {x_{13}}^3-90 {x_{13}}^2+44 {x_{13}}-6=0$ numerically, we obtain three positive solutions of the system of equations $\{ g_1=0, g_2=0, g_3=0,  g_4=0, x_{13} -1, 45 {x_{13}}^3-90 {x_{13}}^2+44 {x_{13}}-6=0  \}$ approximately as
\begin{eqnarray*}  & & ( x_{13}, x_{12}, x_1,  x_2, x_{23} )  \approx ( 0.240224, \,1, \, 0.240224, \,  0.11655, \,  1 ), \\   
 & & ( x_{13}, x_{12}, x_1,  x_2, x_{23} )  \approx ( 0.411737, \, 1, \, 0.411737, \,  1.33605, \,  1 ), \\  
 & & ( x_{13}, x_{12},  x_1,  x_2, x_{23} )  \approx ( 1.34804, \,1, \,  1.34804,  \, 0.214064, \,  1 ).  
 \end{eqnarray*}  

We substitute these  values into the system   (\ref{eq331}). Then we obtain that 
$r_1 = r_2 =r_{12} = r_{23} = r_{13} \approx  0.452311$,  $r_1 = r_2 =r_{12} = r_{23} = r_{13} \approx  0.304634$ and  $r_1 = r_2 =r_{12} = r_{23} = r_{13} \approx  0.276388$ respectively. We multiply these  solutions by a scale factor and we obtain three solutions 
\begin{eqnarray*} & & ( x_1,  x_2, x_{12}, x_{23}, x_{13})  \approx ( 0.108656, \, 0.0527168, \, 0.452311, \, 0.452311, \, 0.108656 ), \\ 
& & ( x_1,  x_2, x_{12}, x_{23}, x_{13})  \approx ( 0.125429, \, 0.407007, \, 0.304634, \, 0.304634, \, 0.125429 ),\\   
 & & (  x_1,  x_2, x_{12}, x_{23}, x_{13})  \approx ( 0.372581, \, 0.0591647, \, 0.276388, \, 0.276388, \, 0.372581 ). 
 \end{eqnarray*} 
for the system of equations  
$$
r_1 = r_2 =r_{12} = r_{23} = r_{13}  =1.
$$ 
Note that, from these two cases,  we obtain {\it three Einstein metrics} up to isometry. 

Now we consider the case of  ideal $J_3$. We see that a  Gr\"obner basis for the ideal $J_3$ is given by 
 \begin{eqnarray*}  & & \{{x_ {13}} - 1, ({x_ {12}} - 1) (3 {x_ {12}} - 2) \left(2 {x_ {12}}^4 - 5 {x_ {12}}^3 + 19 {x_ {12}}^2 - 35 {x_ {12}} + 26 \right), \\
& & -24 {x_ {12}}^5 + 70 {x_ {12}}^4 - 267{x_ {12}}^3 + 550 {x_ {12}}^2 - 522 {x_ {12}} + 63 {x_ {2}} + 130, \\
 & & 63 {x_ {1}} - 24{x_ {12}}^5 + 70 {x_ {12}}^4 - 267{x_ {12}}^3 + 550{x_ {12}}^2 - 522
   {x_ {12}} + 130, \\
 & &  -569934 {x_ {12}}^5 + 2980005{x_ {12}}^4 - 8034374 {x_ {12}}^3 + 22826670 {x_ {12}}^2  \\ 
 & & - 25019231 {x_ {12}} + 246064 z + 7570800 \}.   
   \end{eqnarray*}  
 By solving the equation $2 {x_ {12}}^4 - 5 {x_ {12}}^3 + 19 {x_ {12}}^2 - 35 {x_ {12}} + 26 =0$ numerically, we see that there are no real solutions.   For $x_ {12} =1$, we obtain that $x_1 = x_2= x_{13} = x_{23} =1$, that is, the metric is bi-invariant. For $x_ {12} = 2/3$, we obtain that $x_1 = x_2= 2/3$ and $x_{13}= x_{23} =1$, hence we obtain {\it two Einstein metrics} up to isometry. 
 
From Proposition \ref{prop5.2} it follows that the five  above metrics  obtained are 
all naturally reductive  with respect to $\SO(7)\times L$, where   $L$ is
a closed subgroup of $ \SO(7)$, which is  either $\SO(3)\times\SO(4)$,
$\SO(6)$ or $\SO(7)$. 
\end{proof}
   
\smallskip
\begin{prop}\label{prop2}
The Lie group  $\SO(8)$ admits at least two  (non isometric) 
left-invariant Einstein metrics determined by the
$\Ad( \SO(4)\times\SO(3))$-invariant scalar products of the form {\rm (\ref{metric002})}, which are not naturally reductive. 
 \end{prop}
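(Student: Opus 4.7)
The plan is to follow the template established in the proof of Proposition \ref{prop1}, but applied to the partition $n = 8 = 4+3+1$, i.e.\ $k_1 = 4$, $k_2 = 3$, $k_3 = 1$. First I would specialize Proposition \ref{prop5.4} to these values: substituting $n=8$, $k_1=4$, $k_2=3$ yields explicit rational expressions for $r_1, r_2, r_{12}, r_{13}, r_{23}$ in the five variables $x_1, x_2, x_{12}, x_{13}, x_{23}$. Homogeneity of the Einstein equation in the $x$'s allows the normalization $x_{23}=1$, reducing the problem to finding positive real solutions of the four equations $r_1 - r_2 = 0$, $r_2 - r_{12} = 0$, $r_{12} - r_{23}=0$, $r_{23}-r_{13}=0$ in four unknowns.

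Next, I would clear denominators to obtain a polynomial system $\{g_1,g_2,g_3,g_4\}\subset \mathbb{Q}[x_1,x_2,x_{12},x_{13}]$ and adjoin the saturation polynomial $z\,x_1 x_2 x_{12} x_{13} - 1$ in an auxiliary variable $z$ to force nonvanishing of all coordinates. Using a lexicographic monomial order with $z > x_1 > x_2 > x_{12} > x_{13}$, I would compute a Gr\"obner basis of this ideal. By construction, such a basis contains a univariate polynomial $P(x_{13})$, and all other members express the remaining variables as polynomial functions of $x_{13}$. The ideal will also contain, as factors of $P(x_{13})$, pieces corresponding to the naturally reductive loci from Proposition \ref{prop5.2} (where some pair or triple of variables collapse); I would factor $P$ and discard these spurious components by intersecting with the ideals generated by the naturally reductive relations $\{x_1 - x_{12}, x_{13}-x_{23}\}$, $\{x_2 - x_{23}, x_{12}-x_{13}\}$, $\{x_1 - x_{13}, x_{12}-x_{23}\}$ and $\{x_{12}-x_{13}, x_{13}-x_{23}\}$.

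Then I would numerically solve the remaining factor of $P(x_{13})$. I expect (by analogy with the $\SO(7)$ case, where an irreducible factor of degree 24 produced two positive roots) to find at least two positive real roots $a_{13}$ and $b_{13}$ outside the naturally reductive locus. Back-substituting each root into the Gr\"obner basis relations $x_1 = w_1(x_{13})$, $x_2 = w_2(x_{13})$, $x_{12}=w_{12}(x_{13})$ yields candidate metrics; positivity of the other four coordinates must be checked for each root. Finally, I would rescale each solution so that the common Einstein constant equals $1$, compare the resulting normalized tuples $(x_1,x_2,x_{12},x_{13},x_{23})$, and verify that none satisfies any of the four conditions of Proposition \ref{prop5.2}, so that the metrics are genuinely not naturally reductive, and that the two rescaled tuples are not related by the manifest $S_2$-symmetry of the problem (swapping the roles of the two off-diagonal modules), so they are not isometric.

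The main obstacle will be the Gr\"obner basis computation: with five unknowns, four dense rational equations of degree up to $4$, and no exploitable extra symmetry (since $k_1 \neq k_2$), the elimination ideal may produce a univariate factor of high degree with large integer coefficients, and separating the non-naturally-reductive branch from the naturally reductive branches requires careful factorization. The delicate part is then verifying positivity and non-isometry of at least two distinct solutions on that branch; this I would confirm by explicit numerical approximations of the roots and the induced values of $x_1, x_2, x_{12}$, exactly as was done for $\SO(7)$ in Proposition \ref{prop1}.
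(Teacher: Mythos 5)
Your proposal follows essentially the same route as the paper's proof: specialize Proposition \ref{prop5.4} to $(k_1,k_2,k_3)=(4,3,1)$, normalize $x_{23}=1$, saturate by $z\,x_1x_2x_{12}x_{13}-1$, compute a lex Gr\"obner basis with $z>x_1>x_2>x_{12}>x_{13}$, factor out the naturally reductive components of the univariate eliminant $P(x_{13})$ (the paper finds $(x_{13}-5)(x_{13}-1)(7x_{13}^2-24x_{13}+14)(287x_{13}^3-625x_{13}^2+369x_{13}-63)$ times an irreducible degree-$28$ factor $h_2$), locate two positive roots of $h_2$ numerically, back-substitute via the basis elements $x_i-w_i(x_{13})$, rescale to Einstein constant $1$, and observe the two normalized tuples are not equal (so not isometric, since with $4>3>1$ there is no block-permutation symmetry) nor on any locus of Proposition \ref{prop5.2}. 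The only cosmetic difference is that the paper also fully analyzes the naturally reductive branches $J_1,\dots,J_4$ to enumerate those metrics for Table \ref{table}, which is not needed for the statement itself.
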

\begin{proof}
 This is the case when $k_1= 4, k_2 =3$ and $k_3=1$. 
  From Proposition \ref{prop5.4}, we see that  the components  of  the Ricci tensor ${r}$ for the invariant metric are given by 
 \begin{equation}\label{eq431aa}
\left. 
\small{\begin{array}{lll} 
r_1 &= & \displaystyle{\frac{1}{12 x_1} +
\frac{1}{24} \biggl( 3 \frac{x_1}{{x_{12}}^2}} + \frac{x_1}{{x_{13}}^2} \biggr),  \quad 
 r_2   \, \, \, =  \, \, \, 
\displaystyle{\frac{1}{24 x_2} +
\frac{1}{24} \biggl( 4 \frac{x_2}{{x_{12}}^2} +  \frac{x_2}{{x_{23}}^2} \biggr),} 
\\  \\
r_{12} &= &  \displaystyle{\frac{1}{ 2 x_{12}} +\frac{1}{24}\biggl(\frac{x_{12}}{x_{13} x_{23}} - \frac{x_{13}}{x_{12} x_{23}} - \frac{x_{23}}{x_{12} x_{13}}\biggr) } 
\displaystyle{-\frac{1}{24}  \biggl( 3\frac{x_1}{{x_{12}}^2}+  2 \frac{x_2}{{x_{12}}^2} \biggr)},
\\  \\
r_{23}  &= &  \displaystyle{\frac{1}{ 2 x_{23}} +\frac{1}{6}\biggl(\frac{x_{23}}{x_{13} x_{12}} - \frac{x_{13}}{x_{12} x_{23}} - \frac{x_{12}}{x_{23} x_{13}}\biggr) -
\frac{1}{12}   \frac{x_2}{{x_{23}}^2}},
\\  \\
r_{13}  &= &  \displaystyle{\frac{1}{  2 x_{13}} +\frac{1}{8}\biggl(\frac{x_{13}}{x_{12} x_{23}} - \frac{x_{12}}{x_{13} x_{23}} - \frac{x_{23}}{x_{12} x_{13}}\biggr)  -
\frac{1}{8}   \frac{x_1}{{x_{13}}^2}}.  
  
\end{array} } \right\}
\end{equation}

 We consider the system of equations   
 \begin{equation}\label{eq431} 
 r_1 = r_2, \, \, r_2 =  r_{12}, \, \, r_{12} = r_{23}, \,\,  r_{23} = r_{13}. 
 \end{equation}
Then finding Einstein metrics of the form (\ref{metric002})  reduces  to finding the positive solutions of system (\ref{eq431}),  and  we normalize  our equations by putting $x_{23}=1$. Then we have the system of equations: 
 \begin{equation}\label{eq431_2a} 
\left. { \begin{array}{l}
g_1= {x_{1}}^2 {x_{12}}^2 {x_{2}}+3 {x_{1}}^2 {x_{13}}^2 {x_{2}}-{x_{1}} {x_{12}}^2 {x_{13}}^2
   {x_{2}}^2-{x_{1}} {x_{12}}^2 {x_{13}}^2 \\
 \quad  \quad  -4 {x_{1}} {x_{13}}^2 {x_{2}}^2+2 {x_{12}}^2 {x_{13}}^2 {x_{2}} = 0,  \\ 
g_2=  3 {x_{1}} {x_{13}} {x_{2}}-{x_{12}}^3 {x_{2}}+{x_{12}}^2 {x_{13}} {x_{2}}^2
+{x_{12}}^2 {x_{13}}+{x_{12}} {x_{13}}^2 {x_{2}} \\
    \quad  \quad -12 {x_{12}} {x_{13}} {x_{2}}+{x_{12}} {x_{2}}+6 {x_{13}} {x_{2}}^2= 0, \\ 
g_3= -3 {x_{1}} {x_{13}}+5{x_{12}}^3+2 {x_{12}}^2 {x_{13}} {x_{2}}-12 {x_{12}}^2 {x_{13}}+3 {x_{12}} {x_{13}}^2\\
    \quad  \quad +12 {x_{12}} {x_{13}}-5 {x_{12}}-2 {x_{13}}{x_{2}}  = 0, \\
 g_4= 3 {x_{1}} {x_{12}}-{x_{12}}^2{x_{13}}-2 {x_{12}} {x_{13}}^2 {x_{2}}+12 {x_{12}} {x_{13}}^2
 -12 {x_{12}} {x_{13}} \\
\quad  \quad  -7 {x_{13}}^3+7 {x_{13}}=0. 
\end{array} } \right\} 
\end{equation}

  We consider a polynomial ring $R= {\mathbb Q}[z, x_1, x_2, x_{12}, x_{13}] $ and an ideal $I$ generated by 
$\{ g_1, \, g_2, $  $ g_3, g_4,  \,z \,x_1 \, x_2 \, x_{12} \, x_{13} -1\}  
$  to find non zero solutions of equations (\ref{eq431_2a}). 
We take a lexicographic order $>$  with $ z > x_1 >  x_2 > x_{12} > x_{13}$ for a monomial ordering on $R$. Then, by the aid of computer, we see that a  Gr\"obner basis for the ideal $I$ contains the  polynomial
$$({x_{13}}-5) ({x_{13}}-1) \left(7
   {x_{13}}^2-24 {x_{13}}+14\right)
   \left(287 {x_{13}}^3-625 {x_{13}}^2+369
   {x_{13}}-63\right) \, h_2(x_{13}),$$
where $h_{2}(x_{13})$ is a polynomial of   $x_{13}$  given by 
\begin{eqnarray*}  & & 
h_{2}(x_{13}) =  5426775507148489670400 
   {x_{13}}^{28}-85161185092622977873920 
   {x_{13}}^{27} \\ & &
   +643415930216926223949312 
   {x_{13}}^{26}-3054548385819855899001216 
   {x_{13}}^{25}\\ & &
   +10179140499777121100664800 
   {x_{13}}^{24}-25585147362416655835236384 
   {x_{13}}^{23}\\ & &
   +51380426324079059150364272 
   {x_{13}}^{22}-85934185504663087173249048 
   {x_{13}}^{21}\\ & &
   +120352447918421302289568863 
   {x_{13}}^{20}-136938372384910964649260802 
   {x_{13}}^{19}\\ & &
   +121268417379459335461167457 
   {x_{13}}^{18}-78483773118912467818333590 
   {x_{13}}^{17}\\ & &
   +32048679980888195807658286 
   {x_{13}}^{16}-21037081214018592447662850 
   {x_{13}}^{15}\\ & &
   +96567724403906545251348604 
   {x_{13}}^{14}-279673822213789859470643520 
   {x_{13}}^{13}\\ & &
   +527833035046902978479331387 
   {x_{13}}^{12}-769632045866390647274523642 
   {x_{13}}^{11}\\ & &
   +937521733316934021780397473 
   {x_{13}}^{10}-973318915329328329165562374 
   {x_{13}}^9\\ & &
   +864907599634224063462448416 
   {x_{13}}^8-664413545084655303518836950 
   {x_{13}}^7\\ & &
   +442175543674339070418041970 
   {x_{13}}^6-249282932584983174857359764 
   {x_{13}}^5\\ & &+114233981412525395978707920 
   {x_{13}}^4-40474281023127469650239100 
   {x_{13}}^3\\ & &+10382320721058779134026000 
   {x_{13}}^2-1735984447231701886065000 
   {x_{13}}\\ & &
   +146138820428187141975000. 
\end{eqnarray*}
 By solving the equation $ h_2(x_{13})=0$ numerically, we obtain {\it two} positive solutions $x_{13}= a_{13}$ and $x_{13}= b_{13}$ which are given approximately as
 $ a_{13} \approx  0.48183112, \, \,  b_{13} \approx  2.7966957. $
 We also see that the Gr\"obner basis for the ideal $I$ contains the polynomials 
 $$x_{12} - w_{12}(x_{13}), \quad x_1 -w_1 (x_{13}), \quad x_2 -w_2 (x_{13}), $$
 where $ w_{12}(x_{13})$, $ w_{1}(x_{13})$ and $ w_{2}(x_{13})$ are polynomials of $x_{13}$  with rational coefficients. By substituting the values $ a_{13}$ and  $b_{13}$ for $x_{13}$  into $w_{12}(x_{13})$, $ w_{1}(x_{13})$ and $w_{2}(x_{13})$, we obtain two positive solutions of the system of equations $\{ g_1=0, g_2=0, g_3=0,  g_4=0  \}$ approximately as
 $$( x_{13}, x_{12}, x_1,  x_2)  \approx ( 0.48183112,  0.90692827,  0.20686292, 0.092856189 ), $$  
 $$( x_{13}, x_{12},  x_1,  x_2)  \approx ( 2.7966957,  2.6698577, 0.54677135, 0.28461374 ). 
 $$
We substitute these  values into the system   (\ref{eq431aa}) together with $x_{23} =1$. Then we obtain that 
$r_1 = r_2 =r_{12} = r_{23} = r_{13} \approx   0.47140698$ and  $r_1 = r_2 =r_{12} = r_{23} = r_{13} \approx  0.16491085 $ respectively. We multiply these  solutions by a scale factor and we obtain the two solutions 
 $$( x_1,  x_2, x_{12}, x_{23}, x_{13})  \approx ( 0.097516624, \, 0.043773055, \, 0.42753231, \, 0.47140698, \, 0.22713855 ), $$  
 $$(  x_1,  x_2, x_{12}, x_{23}, x_{13})  \approx ( 0.090168527, \, 0.046935893, \, 0.44028850, \, 0.16491085, \, 0.46120545 ). 
 $$
for the system of equations  
$$
r_1 = r_2 =r_{12} = r_{23} = r_{13}  =1,
$$ 
which {\it are not isometric}.
  Note that due to Proposition \ref{prop5.2}.  these metrics are not naturally reductive. 

Now we consider the case $$({x_{13}}-5) ({x_{13}}-1) \left(7{x_{13}}^2-24 {x_{13}}+14\right)
   \left(287 {x_{13}}^3-625 {x_{13}}^2+369 {x_{13}}-63\right) =0. $$
   
  We   consider ideals $J_1$ generated by 
$\{ g_1, \, g_2, $  $ g_3, g_4,  \,z \,x_1 \, x_2 \, x_{12} \, x_{13} -1, 287 {x_{13}}^3-625 {x_{13}}^2+369 {x_{13}}-63 \}  
$,  $J_2$ generated by 
$\{ g_1, \, g_2, $  $ g_3, g_4,  \,z \,x_1 \, x_2 \, x_{12} \, x_{13} -1,  7{x_{13}}^2-24 {x_{13}}+14  \}$, 
$J_3$ generated by 
$\{ g_1, \, g_2, $  $ g_3, g_4,  \,z \,x_1 \, x_2 \, x_{12} \, x_{13} -1, {x_{13}}-5 \}$ and $J_4$ generated by 
$\{ g_1, \, g_2, $  $ g_3, g_4,  \,z \,x_1 \, x_2 \, x_{12} \, x_{13} -1, {x_{13}}-1 \}$  of the  polynomial ring $R= {\mathbb Q}[z, x_1, x_2, x_{12}, x_{13}] $. 

We take a lexicographic order $>$  with $ z > x_1 >  x_2 > x_{12} > x_{13}$ for a monomial ordering on  $R= {\mathbb Q}[z, x_1, x_2, x_{12}, x_{13}] $. Then, by the aid of computer, we see that a  Gr\"obner basis for the ideal $J_1$ is given by 
\begin{eqnarray*}  & &  \{287 {x_{13}}^3-625 {x_{13}}^2+369 {x_{13}}-63 , x_{12}-1, 117 - 478 x_{13} + 287 {x_{13}}^2 + 42 x_{2}, 
x_{1}-{x_{13}}, \\
& & -123201 + 323882 x_{13} - 173635 {x_{13}}^2 + 378 z \}.    
   \end{eqnarray*}   
  By solving the equation $287 {x_{13}}^3-625 {x_{13}}^2+369 {x_{13}}-63=0$ numerically, we obtain {\it three} positive solutions of the system of equations $\{ g_1=0, g_2=0, g_3=0,  g_4=0, x_{13} -1, 287 {x_{13}}^3-625 {x_{13}}^2+369 {x_{13}}-63=0  \}$.
  Since the solutions satisfy $x_1=x_{13}$,  $x_{12}=x_{23}=1$,
    then Proposition \ref{prop5.2} implies that the metrics  obtained are 
 naturally reductive  with respect to $\SO(8)\times (\SO(5) \times \SO(3))$. 
    
Similarly, we see that a  Gr\"obner basis for the ideal $J_2$ is given by 
\begin{eqnarray*}  & &  \{14 - 24 x_{13 }+ 7 {x_{13}}^2, x_{12} - x_{13}, -1 + x_2, 7 + 7 x_1 - 
 12 x_{13}, -43009 + 15960 x_{13} + 4802 z \}.     
   \end{eqnarray*} 
   By solving the equation $14 - 24 x_{13 }+ 7 {x_{13}}^2=0$, we obtain {\it two} positive solutions of the system of equations $\{ g_1=0, g_2=0, g_3=0,  g_4=0,  14 - 24 x_{13 }+ 7 {x_{13}}^2=0  \}$.
    Since the solutions satisfy $x_{2}=x_{23}=1$, $x_{12}=x_{13}=1$,
    then
  Proposition \ref{prop5.2} implies that the metrics obtained are 
 naturally reductive  with respect to $\SO(8)\times (\SO(4) \times \SO(4))$.  
 (It is possible to check that {\it these two metrics are isometric}).
 
 Similarly, we see that  
a  Gr\"obner basis for the ideal $J_3$ is given by 
\begin{eqnarray*}  & &  \{-5 + x_{13}, -5 + x_{12}, -1 + x_2, -1 + x_1, -1 + 25 z \}  
   \end{eqnarray*}   and we obtain a {\it unique} positive solution of the system of equations $\{ g_1=0, g_2=0, g_3=0,  g_4=0, x_{13} -1, -5 + x_{13}=0  \}$.
   From Proposition \ref{prop5.2} we see that the metric obtained is 
 naturally reductive  with respect to $\SO(8)\times (\SO(4) \times \SO(4))$.

 Finally, we see that 
a  Gr\"obner basis for the ideal $J_4$ is given by 
\begin{eqnarray*}  & &  \{-1 + x_{13}, (-1 + x_{12}) (-5 + 7 x_{12}), -x_{12} + x_2, x_1 - x_{12}, -888 + 
 763 x_{12} + 125 z \},    
   \end{eqnarray*}    and we obtain {\it two} positive solutions of the system of equations $\{ g_1=0, g_2=0, g_3=0,  g_4=0, x_{13} -1=0  \}$. One of the solutions gives the bi-invariant metric $x_1= x_2 = x_{12} = x_{13} = x_{23}=1$ and the other $x_1= x_2 = x_{12} = 5/7,  x_{13} = x_{23}=1$ gives  naturally reductive metric with respect to $\SO(8)\times \SO(7)$ from Proposition \ref{prop5.2}. 
\end{proof}

\smallskip
 \begin{prop}\label{prop3}
For any $n \geq 9$, the Lie group  $\SO(n)$ admits at least one  
left-invariant Einstein metric determined by the
$\Ad( \SO(n-6)\times\SO(3)\times\SO(3))$-invariant scalar product of the form {\em (\ref{metric001})}, which is not naturally reductive.  
 \end{prop}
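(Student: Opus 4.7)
The plan is to exploit the $\bb{Z}_2$ symmetry stemming from $k_2=k_3=3$, which exchanges the second and third $\SO(3)$ factors in $K$. Under this symmetry, the Ricci tensor of a metric of the form (\ref{metric001}) satisfies $r_2=r_3$ and $r_{12}=r_{13}$ whenever one restricts to the symmetric ansatz $x_2=x_3$ and $x_{12}=x_{13}$, as one reads off directly from Proposition \ref{prop5.3}. Thus I would look for Einstein metrics on this three-dimensional slice, so that the six Einstein equations collapse to the three independent conditions $r_1=r_2$, $r_2=r_{12}$, $r_{12}=r_{23}$. After normalizing $x_{23}=1$, three unknowns $x_1, x_2, x_{12}$ remain.

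Substituting $k_1=n-6$, $k_2=k_3=3$ into Proposition \ref{prop5.3} and imposing the ansatz, the three equations become, after clearing denominators, a polynomial system $g_1=g_2=g_3=0$ in $\bb{Z}[n][x_1,x_2,x_{12}]$. Adjoining a saturation variable $z$ with $z\,x_1 x_2 x_{12}-1=0$ to exclude degenerate roots, I would compute a lexicographic Gr\"obner basis of the resulting ideal with $z>x_1>x_2>x_{12}$. The expected output is a triangular system culminating in a univariate polynomial $h(x_{12};n)$ whose coefficients are polynomial in $n$, together with relations expressing $x_1$ and $x_2$ as rational functions of $x_{12}$ and $n$.

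The main obstacle is to prove that for every integer $n\geq 9$ the polynomial $h(\,\cdot\,;n)$ admits at least one positive root $x_{12}^{\star}$ for which the corresponding $x_1,x_2$ extracted from the other basis elements are also positive. My strategy would be to locate explicit positive values $a(n)<b(n)$ at which $h$ changes sign for every $n\geq 9$, seeking them of the form $\alpha+\beta/n$ motivated by a large-$n$ expansion in which the dominant balance in the Ricci formulas decouples and admits an explicit leading-order solution with $x_{12}\neq x_{23}$. The intermediate value theorem then produces $x_{12}^{\star}$, and positivity of $x_1,x_2$ should persist in a neighborhood of this asymptotic solution by continuity in $n$.

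Finally, non-natural-reductivity is verified against Proposition \ref{prop5.1}: under the ansatz $x_{13}=x_{12}$ and $x_3=x_2$, cases 1) and 3) of that proposition both force $x_1=x_2=x_{12}=x_{23}$, case 2) forces $x_2=x_{23}$, and case 4) forces $x_{12}=x_{23}$. It therefore suffices to check that the obtained solution satisfies $x_{12}^{\star}\neq 1$ and $x_2\neq 1$, which I expect to follow from the explicit asymptotic form of the solution (indeed, $x_{12}^{\star}$ and $x_2$ should tend to distinct limits different from $1$ as $n\to\infty$).
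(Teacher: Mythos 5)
Your approach matches the paper's at a structural level: both impose the symmetry ansatz $x_2=x_3$, $x_{12}=x_{13}$ for the partition $k_1=n-6$, $k_2=k_3=3$, reduce to a three-variable polynomial system after normalization, compute a lexicographic Gr\"obner basis to obtain a univariate polynomial, apply the intermediate value theorem to produce a root, establish positivity of the remaining variables, and then rule out natural reductivity via Proposition \ref{prop5.1}. Your reduction of the natural-reductivity cases to the two conditions $x_{12}^\star\ne 1$ and $x_2\ne 1$ is also correct. The one substantive difference is the normalization: the paper sets $x_{12}=x_{13}=1$ and keeps $x_{23}$ free, so the resulting elimination polynomial $h(x_{23})$ is of degree $8$ with coefficients polynomial in $n$, and the IVT step is carried out simply at the fixed abscissae $x_{23}=0,1,2$ (with $6/5$ replacing $1$ when $n=9$), yielding two positive roots with $0<\alpha_{23}<1<\beta_{23}<2$; no large-$n$ ansatz is needed.

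The genuine gap in your proposal is the positivity of $x_1$ and $x_2$. You invoke ``continuity in $n$'' near the asymptotic solution, but $n$ is a discrete parameter, and even viewed as a real parameter this argument only gives positivity for $n$ sufficiently large, leaving an unspecified finite range to check and no explicit bound. The paper instead closes this off uniformly: it computes, for two further monomial orderings, the univariate elimination polynomials $h_1(x_1)$ and $h_2(x_2)$ (each of degree $8$), rewrites their coefficients as polynomials in $n-8$ with manifestly nonnegative integer coefficients, and observes that the sign pattern strictly alternates — positive on even-degree terms, negative on odd. By Descartes' rule of signs this forces every real root of $h_1$ and $h_2$ to be positive for all $n\ge 9$ (in fact $n\ge 8$ for $h_2$), which is exactly the uniform positivity statement your proposal lacks. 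To complete your argument you would either need to reproduce a sign-alternation argument of this kind, or supply an explicit large-$n$ threshold together with a case-by-case verification for the finitely many $n$ below it.
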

 \begin{proof}
     We consider the system of equations 
 \begin{equation}\label{eq26} 
r_1 = r_2, \quad r_2 = r_3, \quad r_3 =  r_{12}, \quad  r_{12} = r_{13}, \quad  r_{13} = r_{23}. 
 \end{equation}
 Then finding Einstein metrics of the form (\ref{metric001})  reduces  to finding positive solutions of  system (\ref{eq26}). 
 
 We put $ k_2 = k_3= 3$ and consider  our equations by putting 
 $$x_{12} = x_{13}=1, \quad x_2=x_3.$$  
  
 Then the system of equations (\ref{eq26}) reduces to  
 the system of equations: 
 \begin{equation}\label{eq26b}
\left. { \begin{array}{lll}
g_1&  = & -n {x_1} {x_2}^2 {x_{23}}^2+n {x_2}
   {x_{23}}^2+6 {x_1}^2 {x_2} {x_{23}}^2 \\ & & +6 {x_1} {x_2}^2 {x_{23}}^2-3 {x_1}
   {x_2}^2-{x_1} {x_{23}}^2-8 {x_2}
   {x_{23}}^2 =0, \\
g_2 &= & n {x_1} {x_2} {x_{23}}^2+n
   {x_2}^2 {x_{23}}^2-2 n {x_2} {x_{23}}^2-7
   {x_1} {x_2} {x_{23}}^2 \\ & & -4 {x_2}^2
   {x_{23}}^2+3 {x_2}^2+3 {x_2} {x_{23}}^3+4
   {x_2} {x_{23}}^2+{x_{23}}^2=0, \\
g_3& =& -n {x_1}
   {x_{23}}^2-n {x_{23}}^3+2 n {x_{23}}^2+7
   {x_1} {x_{23}}^2 \\ & & -2 {x_2} {x_{23}}^2+4
   {x_2}+3 {x_{23}}^3-4 {x_{23}}^2-8
   {x_{23}} =0. 
\end{array} } \right\} 
\end{equation}
  We consider a polynomial ring $R= {\mathbb Q}[z,  x_{2},  x_1, x_{23}] $ and an ideal $I$ generated by 
$\{ g_1, \, g_2, \, g_3,$ $ \,z  (\,x_2 - x_{23}) \,x_1 \, x_{23} \, x_{2} -1\}  
$  to find non-zero solutions of equations (\ref{eq26b}) with $ \,x_2 \neq x_{23}$. 
We take a lexicographic order $>$  with $ z >  x_{2} >  x_1 >   x_{23} $ for a monomial ordering on $R$. Then, by the aid of computer, we see that a  Gr\"obner basis for the ideal $I$ contains the  polynomials $\{ h(x_{23}), p_1(x_{23}, x_1), p_2(x_{23}, x_{2})\}$, where $ h(x_{23})$ is a polynomial of   $x_{23}$  given by 
\begin{eqnarray*} 
& & h(x_{23})=(n-6)^2 (n-3) \left(n^2-7 n+24\right)
   {x_{23}}^8-2 (n-6)^2 (n-2)
   \left(n^2-n+6\right) {x_{23}}^7   \\  & &+(n-6) \left(n^4+26
   n^3-269 n^2+686 n-516\right)
   {x_{23}}^6   -44 (n-6) (n-3) (n-2)
   (n+2) {x_{23}}^5   \\  & & +\left(14 n^4+273 n^3-3034 n^2+5687
   n+1164\right) {x_{23}}^4
   -2 (n-2)
   \left(157 n^2-157 n-2778\right)
   {x_{23}}^3  \\  & &+\left(49 n^3+1658 n^2-6539
   n+836\right) {x_{23}}^2  -728 (n-2) (n+5)
  {x_{23}}+2704 (n-1), 
   \end{eqnarray*}
   $p_1(x_{23}, x_1)$ is a polynomial of $x_{23}$ and $x_1$  given by 
 \begin{eqnarray*} 
& &p_1(x_{23}, x_1) = 8 (2 n-5) \left(n^2-7 n+27\right)
   {x_1}+(n-6)^3 (n-3) \left(n^2-7 n+24\right)
   {x_{23}}^7  \\  
   & &-2 (n-6)^3 (n-2)
   \left(n^2-n+6\right) {x_{23}}^6 +(n-6)^2 \left(n^4+19 n^3-199 n^2+371
   n-12\right) {x_{23}}^5 \\ 
   & & -6 (n-6)^2
   (n-2) \left(5 n^2-5 n-58\right) {x_{23}}^4 \\ 
   & & +(n-6) \left(7 n^4+140  n^3-1641 n^2+3090 n+1248\right)
   {x_{23}}^3-104 (n-6)^2 (n-2) (n+5) {x_{23}}^2
  \\ 
   & &  +8
   \left(48 n^3-625 n^2+2305 n-1719\right)
   {x_{23}}   \end{eqnarray*}
 and $ p_2(x_{23}, x_{2})$ is a polynomial of $x_{23}$ and $x_2$  given by 
\begin{eqnarray*} 
& & p_2(x_{23}, x_{2}) = -(n-6)^2 (n-3) \left(n^2-7 n+24\right) \left(2
   n^2-14 n+15\right) {x_{23}}^7 \\ & & +2 (n-6)^2 (n-2)
   \left(n^2-n+6\right) \left(2 n^2-14 n+15\right)
   {x_{23}}^6 \\ & & -(n-6) \left(2 n^6+25 n^5-666
   n^4+3955 n^3-8860 n^2+7452 n-2124\right)
   {x_{23}}^5 \\ & &+2 (n-6) (n-2) \left(31 n^4-248
   n^3+127 n^2+2142 n-2448\right) {x_{23}}^4 \\ & &-\left(15 n^6+194 n^5-5442
   n^4+33531 n^3-73361 n^2+38979 n+18396\right)
   {x_{23}}^3 \\ & &  +2
   (n-2) \left(119 n^4-952 n^3-1276 n^2+21873
   n-28098\right) {x_{23}}^2 \\
   & & -\left(7 n^5+849
   n^4-11830 n^3+53569 n^2-79135 n+24552\right)
   {x_{23}} \\  & & +52 (n-7) (n-1) \left(n^2-7
   n+27\right)  {x_2} +624 (n-2) \left(n^2-7
   n+27\right). 
    \end{eqnarray*}
  Thus we see that,  if there exists a real root  $x_{23}= \alpha_{23}$ of $h(x_{23}) = 0$, then 
  there are a real solution $x_1= \alpha_1$ of $p_1(\alpha_{23}, x_1) =0$ and  a real solution $x_2= \alpha_2$ of $p_2(\alpha_{23}, x_2) =0$. 
    
 Now we have $h(0) =2704 (n-1)>0$ for $n > 1$,    
  $h(2) = 4 (16 n^5-424 n^4+4625 n^3-25470 n^2+70193
   n-77128 ) $  $= 4 (16 (n-6)^5$ $+56 (n-6)^4+209 (n-6)^3+756 (n-6)^2+1397
   (n-6)+1022 ) > 0$ for $n \geq 6$ and  $h(1) = -2 (n-9) (n-1) n^2 < 0 $ for $n > 9$. 
   Note that for $ n = 9$ $h(6/5)  = - 1751152/390625 < 0$. 
   Thus we see that the equation $h(x_{23}) = 0$ has {\it two} positive roots $x_{23}= \alpha_{23}, \beta_{23}$ with $ 0 < \alpha_{23} < 1 < \beta_{23} < 2$ for $n  > 9$. For $n= 9$ we have  roots  $x_{23}= 1, \beta_{23}$  with  $ 6/5  < \beta_{23} < 2$. 
   
   Let $\gamma =\alpha _{23}$ or $\beta _{23}$.
    We have to show that the real solutions $x_1= \alpha_1$ of $p_1(\gamma, x_1) =0$ and
   $x_2= \alpha_2$ of $p_2(\gamma, x_2) =0$ are positive.
   To this end, we take a lexicographic order $>$  with $ z >  x_{2} >  x_{23} >   x_{1} $ for a monomial ordering on $R$. Then, by the aid of computer, we see that a  Gr\"obner basis for the ideal $I$ contains the  polynomial $h_{1}(x_1) $ of $x_1$ given by 
{\small   \begin{eqnarray*} 
& & h_{1}(x_1) = 4 (n-1) \left(n^2-7 n+24\right) \left(n^2-7
   n+27\right)^2 {x_{1}}^8 -16
   (n-2) \left(n^2-7 n+27\right)\times  \\ & & (2 n^4-28
   n^3+170 n^2-504 n+549 ) {x_{1}}^7 
   +(112 n^7-2728 n^6+29992
   n^5-192017 n^4  \\ & &+761574 n^3  -1849727 n^2+2498826
   n-1434888) {x_{1}}^6 -4 (n-2) (56 n^6-1348 n^5+13792
   n^4  \\ & &-77805 n^3+254449 n^2-453225 n+344070)
   {x_{1}}^5    +(280 n^7-7880
   n^6+93373 n^5-609014 n^4 \\ & &+2369548 n^3-5476199
   n^2+6921202 n-3695208)
   {x_{1}}^4 \\ & & 
   -2 (n-8) (n-2) \left(112 n^5-2304 n^4+18506
   n^3-73480 n^2+144545 n-109506\right)
   {x_{1}}^3 \\ & &+2 (n-8)^2 \left(56 n^5-972 n^4+6475
   n^3-20866 n^2+32361 n-18921\right)
   {x_{1}}^2 \\ & & -2 (n-8)^3 (n-2) (2 n-5) \left(8 n^2-64 n+117\right) {x_{1}}+(n-8)^4 (n-3) (2 n-5)^2.     \end{eqnarray*}
   }
     Now we have  
   {\small   \begin{eqnarray*} 
& & h_{1}(x_1) =   4 (n-1) \left(n^2-7 n+24\right) \left(n^2-7
   n+27\right)^2 {x_{1}}^8 -16
   (n-2) \left(n^2-7 n+27\right)\times  \\ & & (2 (n-8)^4+36 (n-8)^3+266 (n-8)^2+936 (n-8)+1253) {x_{1}}^7  
   +(112 (n-8)^7 \\ & &
   +3544 (n-8)^6+49576 (n-8)^5+395823
   (n-8)^4+1933510 (n-8)^3  +5714577 (n-8)^2 \\ & & +9285018
   (n-8)+6127496) {x_{1}}^6 
   -4 (n-2) (56 (n-8)^6+1340 (n-8)^5 +13632 (n-8)^4 \\ & & +74259
   (n-8)^3+222137 (n-8)^2  +328423 (n-8)+167678)
   {x_{1}}^5    +(280 (n-8)^7  \\ & &+7800 (n-8)^6+91453 (n-8)^5  +578706
   (n-8)^4+2089420 (n-8)^3+4129977 (n-8)^2  \\ & &
   +3804802  (n-8)+1039208)
   {x_{1}}^4  -2 (n-8) (n-2) (112 (n-8)^5+2176 (n-8)^4  \\ & &
   +16458 (n-8)^3+59368
   (n-8)^2+97185 (n-8)+5203)
   {x_{1}}^3  +2 (n-8)^2 (56 (n-8)^5  \\ & &+1268 (n-8)^4+11211 (n-8)^3+48006
   (n-8)^2+97929 (n-8)+73439)
   {x_{1}}^2 \\ & & -2 (n-8)^3 (n-2) (2 n-5) (8 (n-8)^2+64 (n-8)+117) {x_{1}}+(n-8)^4 (n-3) (2 n-5)^2.     \end{eqnarray*}
   }
    Thus we see that,  for $n \geq 9$, the coefficients of the polynomial  $h_{1}(x_{1})$ are positive  for  even degree terms and negative for odd degree terms, so  if  the equation $h_{1}(x_{1})  = 0$ has real solutions then these are all  positive.

   We also take a lexicographic order $>$  with $ z >  x_{1} >  x_{23} >   x_{2} $ for a monomial ordering on $R$. Then, by the aid of computer, we see that a  Gr\"obner basis for the ideal $I$ contains the  polynomial $h_{2}(x_2) $ of $x_2$ given by 
{\small \begin{eqnarray*} 
& & h_{2}(x_2)= 64 (n-6)^2 (2 n-5)^2 \left(n^2-7 n+27\right)^2
   {x_{2}}^8 \\ & & -896 (n-6)^2 (n-2) (2 n-5)
   \left(n^2-7 n+27\right) \left(n^2-n+12\right)
   {x_{2}}^7 \\
   & &
   +4 (n-6) (1176 n^7-14228
   n^6+100368 n^5-730649 n^4+4440678 n^3 \\ & & -18369941
   n^2+41390868 n-33209244)
   {x_{2}}^6 
   -8 (n-6) (n-2) (686 n^6-2200 n^5 \\ & &+33593 n^4 -489642
   n^3+2433897 n^2  -7853838 n+19276848)
   {x_{2}}^5 \\
   & & +(2401 n^8-2114 n^7+85477
   n^6-3433940 n^5+22264067 n^4 \\ & & -66085822
   n^3+304096111 n^2-1233542964
   n+1558955520) {x_{2}}^4 \\ & & 
   -2 (n-2) (4949 n^6-15874
   n^5+114730 n^4-3099532 n^3 \\ & &+11930753
   n^2+23543310 n-121637952) {x_{2}}^3
   \\ & & +3
   \left(5099 n^6-34656 n^5+78010 n^4-1041692
   n^3+8171395 n^2-21025340 n+15585312\right)
   {x_{2}}^2 \\ & &-104 (n-2) \left(101 n^4-661 n^3+743
   n^2+5001 n-15768\right) {x_{2}} \\ & &+2704 (n-3) (n-1) \left(n^2-7 n+24\right)   \end{eqnarray*}
   }

   Now we have 
   {\small \begin{eqnarray*} 
& & h_{2}(x_2)= 64 (n-6)^2 (2 n-5)^2 \left(n^2-7 n+27\right)^2
   {x_{2}}^8  -896 (n-6)^2 (n-2) (2 n-5)\times \\& &
   \left(n^2-7 n+27\right) \left(n^2-n+12\right)
   {x_{2}}^7    +4 (n-6) (1176 (n - 8)^7 + 51628 (n - 8)^6 + 997968
     (n - 8)^5 \\& &  + 10699111 (n - 8)^4 + 68192070 (n - 8)^3 + 256591483
     (n - 8)^2 + 519880260 (n - 8)  \\ & &+ 428454852)
   {x_{2}}^6 
   -8 (n-6) (n-2) (686 (n - 8)^6 + 30728 (n - 8)^5 + 604153 (n - 8)^4\\ & & + 6201974
     (n - 8)^3 + 34466041 (n - 8)^2 + 95692802
     (n - 8) + 106856960)
   {x_{2}}^5 \\
   & & +(2401 (n - 8)^8 + 151550 (n - 8)^7 + 4269685
     (n - 8)^6 + 66669212 (n - 8)^5 + 617496227
     (n - 8)^4  \\ & &+ 3426718370 (n - 8)^3 + 11106086431
     (n - 8)^2 + 19421585724 (n - 8) + 14243093536) {x_{2}}^4 
     \\ & & 
   -2 (n-2) (4949
     (n - 8)^6 + 221678 (n - 8)^5 + 4230810 (n - 8)^4 + 41090228
     (n - 8)^3\\ & & + 204389985 (n - 8)^2 + 502205726
     (n - 8) + 490441840) {x_{2}}^3
    +3
(5099 (n - 8)^6 + 210096 (n - 8)^5 \\ && + 3586810
     (n - 8)^4 + 31488548 (n - 8)^3 + 148970467
     (n - 8)^2 + 362225908 (n - 8) \\ & &+ 357598976)
   {x_{2}}^2 -104 (n-2) (101 (n - 8)^4 + 2571 (n - 8)^3 + 23663 (n - 8)^2 + 96825
     (n - 8) \\ & &+ 147056) {x_{2}} +2704 (n-3) (n-1) \left(n^2-7 n+24\right).    \end{eqnarray*}
   } 
  Thus we see that,  for $n \geq 8$, the coefficients of the polynomial  $h_{2}(x_{2})$ are positive  for  even degree terms and negative for odd degree terms and that, so if  the equation $h_{2}(x_{2})  = 0$ has real solutions then these are all  positive. 
  Since the solutions satisfy the property $x_2\ne x_{23}$, $x_{23}\ne 1$ and $x_{12}= x_{13}=1$,
    then
  Proposition \ref{prop5.1} implies that the metrics obtained are not
 naturally reductive 
  \end{proof}

From the above Propositions \ref{prop1}, \ref{prop2} and \ref{prop3} we obtain Theorem \ref{main}.

\section{The Lie groups $\SO((n-2)+1+1)$}

In the present section we consider
the scalar products (\ref{metric003}) on the 
  Lie groups
$\SO(k_1+k_2+k_3)$, $k_1=n-2, k_2=k_3=1$, and
prove that for $n\ge 5$ we obtain only naturally reductive Einstein metrics.
In this case decomposition (\ref{decom_so(n)}) becomes
$$
\mathfrak{so}((n-2)+1+1)=\mathfrak{m}_1\oplus \mathfrak{m}_{12}\oplus\mathfrak{m}_{13}\oplus \mathfrak{m}_{23},
$$
where $\mathfrak{m}_{12}$, $\mathfrak{m}_{13}$ are equivalent as $\Ad(\SO(n-2))$-modules.

\begin{lemma}  For the metric {\em(\ref{metric003})} it is
$$
r(\mathfrak{m}_{12}, \mathfrak{m}_{13})=(0).
$$
\end{lemma}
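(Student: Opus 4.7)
The plan is to exhibit an involutive Lie algebra automorphism $\phi$ of $\fr{so}(n)$ that preserves the scalar product (\ref{metric003}) and acts with opposite signs on $\fr{m}_{12}$ and $\fr{m}_{13}$. Invariance of the Ricci tensor under this involution will then force $r(\fr{m}_{12},\fr{m}_{13})=(0)$.

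I would take $\sigma = \diag(1,\dots,1,-1) \in \OOO(n)$, the reflection flipping only the last coordinate, and set $\phi = \Ad(\sigma)$. Although $\sigma\notin\SO(n)$, conjugation by $\sigma$ still defines a Lie algebra automorphism of $\fr{so}(n)$ (equivalently, a Lie group automorphism $\Phi(g)=\sigma g\sigma^{-1}$ of $\SO(n)$). A direct computation on the basis $\{e_{ab}\}$ of Section 3 yields $\phi(e_{ab})=\sigma_{aa}\sigma_{bb}\,e_{ab}$, from which it follows at once that $\phi$ fixes $\fr{so}(n-2)\oplus\fr{m}_{12}$ pointwise and acts as $-\Id$ on $\fr{m}_{13}\oplus\fr{m}_{23}$, since only indices equal to $n$ pick up a sign.

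Next, because the decomposition $\fr{so}(n)=\fr{so}(n-2)\oplus\fr{m}_{12}\oplus\fr{m}_{13}\oplus\fr{m}_{23}$ is $\phi$-stable with $\phi$ acting as $\pm\Id$ on each summand, and since $-B$ is invariant under any Lie algebra automorphism, the inner product $\langle\ ,\ \rangle$ of (\ref{metric003}) is preserved by $\phi$. A short verification (using $\Phi\circ L_h = L_{\Phi(h)}\circ\Phi$) shows that $\Phi^{*}\tilde g$ is again a left-invariant metric with the same value at the identity, so $\Phi$ is a Riemannian isometry of $(\SO(n),\tilde g)$; in particular it preserves the Ricci tensor, giving $r(\phi X,\phi Y)=r(X,Y)$ for all $X,Y\in\fr{so}(n)$.

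Finally, for $X\in\fr{m}_{12}$ and $Y\in\fr{m}_{13}$ this yields
\[
r(X,Y)\;=\;r(\phi X,\phi Y)\;=\;r(X,-Y)\;=\;-r(X,Y),
\]
so $r(X,Y)=0$, which is the claim. There is no real obstacle: the entire argument rests on selecting the right involution and verifying its action on each of the four $\Ad(\SO(n-2))$-submodules by a one-line matrix calculation, after which invariance of the Ricci tensor under an isometric Lie algebra automorphism closes the argument formally.
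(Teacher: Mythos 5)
Your proof is correct, and it takes a genuinely different route from the paper. The paper's proof is a direct computation: it applies the Ricci curvature formula of Besse (adapted to the Lie group and polarized, their equation (\ref{riccipolar})), and then, working in the basis $\{e_{ab}\}$, performs a case analysis of the brackets $[e_{i,n-1}, e_{ab}]$, $[e_{jn}, e_{ab}]$, $[e_{ab}, e_{cd}]$ to verify that each scalar product appearing in the formula vanishes. Your proof instead exhibits the involutive Lie algebra automorphism $\phi=\Ad(\sigma)$ with $\sigma=\diag(1,\dots,1,-1)\in\OOO(n)$, observes from $\phi(e_{ab})=\sigma_{aa}\sigma_{bb}e_{ab}$ that $\phi$ fixes $\fr{so}(n-2)\oplus\fr{m}_{12}$ and negates $\fr{m}_{13}\oplus\fr{m}_{23}$, checks that $\phi$ preserves $-B$ and each summand (hence the whole inner product (\ref{metric003})), promotes this to the Lie group isometry $\Phi(g)=\sigma g\sigma^{-1}$ of $(\SO(n),\tilde g)$ via $\Phi\circ L_h=L_{\Phi(h)}\circ\Phi$, and then reads off $r(X,Y)=r(\phi X,\phi Y)=-r(X,Y)$. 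The checks are all sound: $\Phi$ is a well-defined automorphism of $\SO(n)$ even though $\sigma\notin\SO(n)$, $\Ad(\sigma)$ preserves the Killing form, and since $\phi$ acts by $\pm\Id$ on each of the four summands it preserves any block-scaled multiple of $-B$. Your symmetry argument is shorter, requires no Lie bracket bookkeeping, and makes the vanishing transparent (it also shows simultaneously that $r(\fr{m}_{12},\fr{m}_{23})$ and $r(\fr{so}(n-2),\fr{m}_{13})$, etc., vanish). The paper's computational approach has the advantage of being self-contained within the Ricci tensor machinery it has already set up, and of not requiring any appeal to isometries outside the identity component.
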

\begin{proof} We use the formula for the Ricci curvature in \cite[Corollary 7.38, p. 184]{Be} adapted  for the case of a
Lie group.  Then using polarization we  obtain that
\begin{equation}\label{riccipolar}
r(X, Y)=-\frac12\sum _j\langle [X, X_j], [Y, X_j]\rangle-\frac12 B(X, Y)+\frac14 \sum _{i, j}
\langle [X_i, X_j], X\rangle \langle [X_i, X_j], Y\rangle,
\end{equation}
where $\{X_i\}$ is an orthonormal  basis of $\fr{so}(n)$ with respect to the metric (\ref{metric003}).
For any $X\in\fr{m}_{12}, Y\in\fr{m}_{13}$ we need to show that $r(X, Y)=0$.  A computation using (\ref{modules}) shows that 
$\fr{m}_{12}, \fr{m}_{13}$ are orthogonal with respect to $-B$, so the second term in (\ref{riccipolar}) vanishes.

We claim that (i) $\langle [X, X_j], [Y, X_j]\rangle=0$ and 
(ii) $\langle [X_i, X_j], X\rangle \langle [X_i, X_j], Y\rangle=0$.
Indeed,  using the orthonormal basis $\{e_{ab}: 1\le a<b\le n\}$ of $\fr{so}(n)$ introduced in Section 3  we let
$X=e_{i, n-1}\ (1\le i\le n-2)$, $Y=e_{jn}\ (1\le j\le n-2)$.

To prove claim (i) let $X_j=e_{ab}\ (a<b)$.  By using Lemma \ref{brac} it follows that
$$
[X, X_j]=[e_{i, n-1}, e_{ab}]=
\begin{cases}
\pm e_{ib},\quad \quad\  a=n-1\\
\pm e_{b, n-1},\quad a=i\\
\pm e_{ia},\quad \quad\  b=n-1\\
\pm e_{a, n-1},\quad b=i
\end{cases}
$$
and
$$
[Y, X_j]=[e_{j n}, e_{ab}]=
\begin{cases}
\pm e_{jb},\quad a=n\\
\pm e_{bn},\quad a=j\\
\pm e_{an},\quad b=j\\
\pm e_{ja},\quad b=n.
\end{cases}
$$
By using these  expressions for the Lie brackets we can check that
$-B([X, X_j], [Y, X_j])=0$, hence for the metric (\ref{metric003}) it is also
$\langle [X, X_j], [Y, X_j]\rangle=0$.
To prove claim (ii) we let $X_i=e_{ab}\ (a<b)$, $X_j=e_{cd}\ (c<d)$.  By using
Lemma \ref{brac} and obtain that
$$
[X_i, X_j]=[e_{ab}, e_{cd}]=
\begin{cases}
\pm e_{ad},\quad b=c\\
\pm e_{ac},\quad b=d\\
\pm e_{bd},\quad a=c\\
\pm e_{bc},\quad a=d.
\end{cases}
$$
The conclusion then follows by a similar argument as for the proof of claim (i).
\end{proof}

\medskip
 \begin{prop}\label{prop4}
For any $n \geq 5$,  the only 
left-invariant Einstein metrics on the Lie group  $\SO(n)$, determined by the
$\Ad( \SO(n-2))$-invariant scalar products of the form {\em(\ref{metric003})},  are naturally reductive
metrics. 
 \end{prop}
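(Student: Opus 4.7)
The plan is to invoke the preceding lemma, which guarantees that the Ricci tensor of any metric of the form (\ref{metric003}) is diagonal with respect to the decomposition $\fr{so}(n) = \fr{m}_1 \oplus \fr{m}_{12} \oplus \fr{m}_{13} \oplus \fr{m}_{23}$, so that the Einstein condition is precisely the polynomial system $r_1 = r_{12} = r_{13} = r_{23}$, with the components given by Proposition \ref{prop5.5}.

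The crucial observation is the $x_{12}\leftrightarrow x_{13}$ symmetry of the system: $r_1$ and $r_{23}$ are invariant under this swap while $r_{12}$ and $r_{13}$ are interchanged. Clearing denominators in the difference yields the factorization
\[
r_{12} - r_{13} \;=\; \frac{(x_{12}-x_{13})\,Q}{4(n-2)\,x_{12}^{2}\,x_{13}^{2}\,x_{23}},
\qquad
Q \,:=\, (x_{12}+x_{13})\bigl(2x_{12}x_{13}+(n-3)\,x_{1}x_{23}\bigr) - 2(n-2)\,x_{12}x_{13}x_{23}.
\]
This produces a dichotomy. Either $x_{12}=x_{13}$, in which case condition (3) of Proposition \ref{prop5.33} is automatically satisfied and the metric is naturally reductive; or $Q=0$.

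For the second branch I would normalize $x_{23}=1$ and adjoin $Q=0$ to the remaining Einstein equations $r_{1}-r_{23}=0$ and $r_{12}-r_{23}=0$; clearing denominators produces a polynomial system in $\bb{Q}[x_{1},x_{12},x_{13}]$, which I would analyze via a Gr\"obner basis in a suitable lexicographic order (with an auxiliary variable $z$ and relation $z\cdot x_{1}x_{12}x_{13}-1=0$ to confine attention to the positive locus). The objective is to verify that the variety of positive solutions lies in the union of
\[
V_{1}=\{x_{1}=x_{12},\ x_{13}=1\} \quad\text{and}\quad V_{2}=\{x_{1}=x_{13},\ x_{12}=1\},
\]
which are the normalized forms of conditions (1) and (2) of Proposition \ref{prop5.33}.

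I expect the main obstacle to be this algebraic verification on the $Q=0$ branch. Since $Q$ is linear in $x_{1}$, a clean attack is to solve $Q=0$ for $x_{1}$, substitute into the other two equations, and show that the resulting system in $x_{12},x_{13}$ (obtained as a resultant, or equivalently as a reduced Gr\"obner basis) factors through the polynomials $(x_{12}-x_{13})$, $(n-1)x_{12}-(n-3)x_{13}$ and $(n-1)x_{13}-(n-3)x_{12}$, corresponding respectively to conditions (3), (1) and (2) of Proposition \ref{prop5.33}. Under the normalization $x_{23}=1$ and the standing assumption $x_{12}\ne x_{13}$ of the second branch, the first factor is excluded while the other two pin the metric to $V_{1}$ or $V_{2}$; a direct substitution then checks that on $V_{1}$ the equation $Q=0$ forces $x_{12}=(n-3)/(n-1)$ (and symmetrically on $V_{2}$), yielding a single naturally reductive Einstein metric per branch up to scaling.
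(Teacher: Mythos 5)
Your factorization of $r_{12}-r_{13}$ into $(x_{12}-x_{13})\,Q$ (after clearing denominators) is correct, and the resulting dichotomy is a genuine structural observation that the paper does not exploit. It cleanly dispatches the $x_{12}=x_{13}$ branch: any Einstein metric there is automatically naturally reductive by condition (3) of Proposition~\ref{prop5.33}, so one need not even solve the Einstein system on that locus. Your check that $Q=0$ forces $x_{12}=(n-3)/(n-1)$ on $V_1$ (and symmetrically on $V_2$) is also correct.

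The gap is in the $Q=0$ branch. You write that you ``expect'' the eliminated system in $(x_{12},x_{13})$ to factor through the three linear polynomials, but this is a conjecture rather than a verification, and the paper's own Gr\"obner computation makes it implausible that the factorization is clean. After normalizing $x_{23}=1$ and computing a lex basis for $\langle g_1,g_2,g_3, z\,x_1x_{12}x_{13}-1\rangle$ with $z>x_{13}>x_{12}>x_1$, the paper finds, alongside the three linear factors in $x_1$ that yield the naturally reductive metrics, a residual cubic $A(x_1)$; proving that $A(x_1)=0$ is incompatible with the rest of the Einstein system is where almost all the work lies. The paper does this by locating a quadratic $p(x_1,x_{12})$ in the Gr\"obner basis of the augmented ideal, reducing its discriminant modulo $A(x_1)$ to a degree-two polynomial $r(x_1)$, and establishing $r(x_1)>0$ by expanding the coefficients in powers of $(n-9)$ for $n\ge 9$ and checking the discriminant sign case-by-case for $5\le n\le 8$. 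Nothing in your argument explains why intersecting with $\{Q=0\}$ should strip out this stubborn component, and the required positivity analysis does not follow formally from the factoring of $r_{12}-r_{13}$. To close the proof you must either compute the resultant on the $Q=0$ branch and verify there is no factor beyond the three linears, or carry out an analogue of the paper's Case~(a) discriminant argument to exclude positive real solutions on whatever residual factor appears.
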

 \begin{proof}
 The proof involves  manipulations of polynomials using Gr\"obner bases, and it is quite extensive   to be presented in its complete form.  Since the metrics obtained are only naturally reductive, we will only sketch the ideas  behind these computations.
 
 We use the components for the Ricci tensor in Proposition \ref{prop5.5} and 
  consider the system of equations   
 \begin{equation}\label{eq444} 
 r_1 =  r_{12}, \, \, r_{12} = r_{23}, \,\,  r_{23} = r_{13}. 
 \end{equation}
Then finding Einstein metrics of the form (\ref{metric003})  reduces  to finding the positive solutions of system (\ref{eq444}),  and  we normalize  our equations by putting $x_{23}=1$. Then we have the system of equations: 
 \begin{equation}\label{eq444_a} 
\left. { \begin{array}{l}
g_1=  n {x_1}^2 {x_{13}}^2-2 n {x_1} {x_{12}} {x_{13}}^2+n {x_{12}}^2 {x_{13}}^2+{x_1}^2 {x_{12}}^2-2 {x_1}^2 {x_{13}}^2-{x_1} {x_{12}}^3 {x_{13}}\\
 \quad  \quad  +{x_1} {x_{12}} {x_{13}}^3 +4 {x_1} {x_{12}} {x_{13}}^2+{x_1} {x_{12}} {x_{13}}-4 {x_{12}}^2 {x_{13}}^2=0,  \\ 
g_2=  -n {x_1} {x_{13}}+n {x_{12}}^3-2 n {x_{12}}^2 {x_{13}}+n {x_{12}} {x_{13}}^2+2 n {x_{12}} {x_{13}}-n {x_{12}}+3 {x_1} {x_{13}}\\
  \quad  \quad -{x_{12}}^3+4 {x_{12}}^2 {x_{13}}-3 {x_{12}} {x_{13}}^2-4 {x_{12}} {x_{13}}+{x_{12}}=0,  \\ 
g_3=  n {x_1} {x_{12}}-n {x_{12}}^2 {x_{13}}+2 n {x_{12}} {x_{13}}^2-2 n {x_{12}} {x_{13}}-n {x_{13}}^3+n {x_{13}}-3 {x_1} {x_{12}}\\
   \quad  \quad +3 {x_{12}}^2 {x_{13}}-4 {x_{12}} {x_{13}}^2+4 {x_{12}} {x_{13}}+{x_{13}}^3-{x_{13}}=0. 
\end{array} } \right\} 
\end{equation}

  We consider a polynomial ring $R= {\mathbb Q}[z, x_1, x_{12}, x_{13}] $ and an ideal $I$ generated by 
$\{ g_1, \, g_2, $  $ g_3,   \,z \,x_1 \, x_{12} \, x_{13} -1\}  
$  to find non zero solutions of equations (\ref{eq444_a}). 
We take a lexicographic order $>$  with $ z >  x_{13}> x_{12} > x_1$ for a monomial ordering on $R$. Then, by the aid of computer, we see that a  Gr\"obner basis for the ideal $I$ contains the  polynomial

\begin{equation}\label{product}
({x_{1}}-1) \left((n-1) {x_{1}}-(n-3) \right)\left(\left(n^3-2 n^2+n-4\right){x_1}-(n-4) (n-1)^2\right) A(x_1),
\end{equation}
where
\begin{equation*}\label{A} 
 \begin{array}{l}
A(x_1) = 16 (n-3)^2 (n-2)^3 (n-1) {x_1}^3-4 (n-3) (n-2)^2 \left(5 n^3-18 n^2+21 n-4\right) {x_1}^2\\
 \qquad\qquad +4 (n-2)^2 (n-1) \left(n^4-27 n^2+70 n-32\right) {x_1}\\
 \qquad\qquad -(n-4) (n-3) (n-1)^2 n \left(n^3-6 n^2+25 n-32\right)
\end{array} 
\end{equation*}
is a polynomial in $x_1$ of degree $3$.
We divide our study in the following cases:

\medskip
\noindent
\underline{Case (a)} $A(x_1)=0$.  We claim that the system of equations 
(\ref{eq444_a}) has no real solutions in this case.

\noindent
We consider the ideal $J$ generated by $\{g_1, g_2, g_3, A(x_1), \,z \,x_1 \, x_{12} \, x_{13} -1\}$ of the polynomial ring $R= {\mathbb Q}[z, x_1, x_{12}, x_{13}]$.
We take a lexicographic order $>$  with $ z >  x_{13}> x_{12} > x_1$  for a monomial ordering on $R$. Then, by the aid of computer, we see that a  Gr\"obner basis for the ideal $J$ contains the  polynomial

\begin{equation*}
\small{
  \begin{array}{l}
 p(x_1, x_{12}) = 4 (n-2)^2 (n-1) \left(n^4-10 n^3+37 n^2-32 n+16\right){x_{12}}^2  
-2 (n-2)\times \\ \left(\left(n^4+2 n^3-3 n^2-36 n+64\right) (n-1)^2 \right.  
   +2 (n-2) \left(2 n^5-25 n^4+93 n^3-143 n^2 \right.
 \left.  +93 n-28\right){x_1}\\ + \left. 8 (n-3) (n-2)^2
   (n+1) (n-1) {x_1}^2\right){x_{12}}   
   + (n-1) \left(4 (n-3) (n-2)^2 (n-1) \left(n^2-3 n+4\right) \right.
   {x_1}^2  \\ -8 (n-2) \left(3 n^4-17 n^3+31 n^2-21 n+8\right)
   {x_1}   \left.+(n-3) (n-1) \left(n^3-6 n^2+25 n-32\right)
   n^2\right).
   \end{array}
   }
\end{equation*}

We view the above polynomial as a polynomial of the variable $x_{12}$, that is
$$
p(x_1, x_{12})= \tilde{p}(x_{12})=4 (n-2)^2 (n-1)f_2(n){x_{12}}^2+f_1(x_1, n)x_{12}+f_0(x_1, n),
$$
where $f_2(n)=(n-4)^4+ 6(n-4)^3 +13(n-4)^2 +40 (n-4) +96>0$ for $n\ge 4$. 
We will show that, for the roots of the equation $A(x_1)=0$, the polynomial $\tilde{p}(x_{12})$ has no real roots for $x_{12}$.
Indeed, the discriminant $D_{\tilde{p}(x_{12})}$ of the polynomial $\tilde{p}(x_{12})$ has the form
$$
D_{\tilde{p}(x_{12})}=f_1^2-16(n-2)^2(n-1)f_2f_0=-4(n-2)^2\cdot \tilde{q}(x_1),
$$
where
$\tilde{q}(x_1)$
is a polynomial in $x_1$ of degree $4$. 
We need to show that  $D_{\tilde{p}(x_{12})}<0$  whenever $x_1=\alpha$, with $A(\alpha)=0$ and $\alpha >0$ (since we
are interested to Riemannian metrics).
By dividing the polynomial $\tilde{q}(x_1)$ by $A(x_1)$ we obtain that
$$
\tilde{q}(x_1)=A(x_1)B(x_1)+q(x_1),
$$
where $q(x_1)$ is a polynomial of degree $2$ given by
$$
q(x_1)=4(n-1)f_2(n)(a_0(n)+a_1(n)x_1+a_2(n){x_1}^2).
$$
Then we  need to show that $q(x_1)>0$  or that 
$$
r(x_1)\equiv a_0(n)+a_1(n)x_1+a_2(n){x_1}^2>0,
$$ 
where the coefficients of the polynomial $r(x_1)$ are explicitly given as
\begin{eqnarray*}
a_0(n) &=& (n-1)(5 n^5-38 n^4+113 n^3-196 n^2+192 n-64),\\
  a_1(n) & =& (n-2)(n^6-15 n^5+69 n^4-141 n^3+150 n^2-104 n+32),\\
  a_2(n) &= &4 (n-3) (n-2)^2 (n-1)(n^2-3 n+4)>0\ \ \mbox{for}\  n\ge 4.
\end{eqnarray*}
If $n\ge 9$ we expand the  polynomials $a_0(n), a_1(n)$ as

\begin{eqnarray*}
a_0(n) &=& 5 (n-9)^6+227 (n-9)^5+4291 (n-9)^4+43197 (n-9)^3\\
 & &\   +244036 (n-9)^2+732812 (n-9)+912736,\\
a_1(n) &=& (n-9)^7+46 (n-9)^6+882 (n-9)^5+9036 (n-9)^4+52353 (n-9)^3\\
  & &\   +164350 (n-9)^2+229164 (n-9)+48104,\\
\end{eqnarray*}
hence it follows that
$r(x_1)>0$.

\noindent
\smallskip
If $5\le n <9$ we consider the  discriminant $D_{r(x_1)}$ of the polynomial $r(x_1)$ and this is given by
\begin{eqnarray*}
D_{r(x_1)} &=&
(n-2)^3  (n^{11}-28 n^{10}+227 n^9-650 n^8-761 n^7+11240 n^6-38635 n^5\\
&&\  +75262 n^4-93472 n^3+72352 n^2-31232 n+5632).
\end{eqnarray*}
It is easy to see that for $n=5, 6, 7, 8$ it is $D_{r(x_1)}<0$ so $r(x_1)>0$. Therefore
$D_{\tilde{p}(x_{12})}<0$  for all $n \geq 5$ and this completes the proof of Case (a).

\medskip
\noindent
\underline{Case (b)} $A(x_1)\ne0$.  In this case we obtain only naturally reductive Einstein metrics.

\noindent
Indeed,   from equation (\ref{product}) we obtain 
 the solutions
$$
x_{1}=1,\qquad
x_1=\frac{n-3}{n-1},\qquad
x_1=\frac{(n-4)(n-1)^2}{n^3-2n^2+n-4}.
$$
By  substituting  the above solutions to the system (\ref{eq444_a}) and 
computing    Gr\"obner bases for this system,  we obtain the  solutions
$$
(x_1, x_{12}, x_{13}, x_{23})=
(1, 1, 1, 1),
$$
$$
(x_1, x_{12}, x_{13}, x_{23})=
\left(\frac{n-3}{n-1}, 1, \frac{n-3}{n-1}, 1\right),
$$
$$
(x_1, x_{12}, x_{13}, x_{23})=
\left(\frac{(n-4)(n-1)^2}{n^3-2n^2+n-4}, \frac{(n-1)(n^2-3n+4)}{n^3-2n^2+n-4}, 
\frac{(n-1)(n^2-3n+4)}{n^3-2n^2+n-4}, 1\right).
$$
From Proposition \ref{prop5.33} it follows that the above metrics are naturally reductive
with respect to $\SO(n)\times L$, where $L$ is $\SO(n)$, $\SO(n-1)$ or $\SO(n-2)\times\SO(2)$ respectively.
 \end{proof}

\bigskip
By working in a similar manner as in the above proofs, we can obtain Table 1
for the Lie groups $\SO(n)=\SO(k_1+k_2+k_3)$, which lists the numbers of the
   new non naturally reductive and naturally reductive left-invariant 
Einstein metrics  of the forms (\ref{metric001}),  (\ref{metric002}) or (\ref{metric003}),  up to isometry.
The table also contains results  that due to space limitations we did not provide explicit calculations.

\medskip
  \begin{center}
\small{
\begin{table}
 \begin{tabular}{|c|c|c|c|}
  \hline 
 $\SO(k_1+k_2+k_3)$  & $(k_1, k_2, k_3)$ & Non-naturally reductive &  naturally reductive\\ 
 \hline 
 $\SO(5)$    & $(3, 1, 1)$ & $0$ & $3$  \\
 \hline 
     & $(2, 2, 1)$ & $0$ & $3$  \\
     \hline 
 $\SO(6)$    & $(4, 1, 1)$ & $0$ & $3$  \\
       \hline 
     & $(3, 2, 1)$ & $0$ & $5$  \\
       \hline 
   & $(2, 2, 2)$ & $0$ & $2$ \\
       \hline 
 $\SO(7)$    & $(5, 1, 1)$ & $0$ & $3$ \\
       \hline 
          & $(4, 2, 1)$ & $0$ & $6$ \\
       \hline 
        & $(3, 3, 1)$ & ${\bf 1}$ & $5$ \\
       \hline 
       & $(3, 2, 2)$ & $0$ & $5$ \\
       \hline 
 $\SO(8)$    & $(6, 1, 1)$ & $0$ & $3$ \\
       \hline 
        & $(5, 2, 1)$ & $0$ & $6$ \\
       \hline 
        & $(4, 3, 1)$ & ${\bf 2}$ & $7$ \\
       \hline        
   & $(4, 2, 2)$ & $0$ & $5$ \\
       \hline  
       & $(3, 3, 2)$ & ${\bf 1}$ & $5$ \\
       \hline     
 $\SO(9)$    & $(7, 1, 1)$ & $0$ & $3$ \\
       \hline 
        & $(6, 2, 1)$ & $0$ & $6$ \\
       \hline 
        & $(5, 3, 1)$ & ${\bf 2}$ & $8$ \\
       \hline        
   & $(5, 2, 2)$ & $0$ & $5$ \\
       \hline  
       & $(4, 3, 2)$ & ${\bf 2}$ & $8$ \\
       \hline  
  & $(4, 4, 1)$ & ${\bf 2}$ & $5$ \\
       \hline  
       & $(3, 3, 3)$ & ${\bf 2}$ & $5$ \\
       \hline                      
 \end{tabular}
 \bigskip
\caption{Numbers of  new non naturally reductive and naturally reductive left-invariant 
Einstein metrics on the Lie group $\SO(n)=\SO(k_1+k_2+k_3)$  up to isometry.  These are $\Ad(\SO(k_1)\times\SO(k_2)\times\SO(k_3))$-invariant metrics  of the forms (\ref{metric001}), (\ref{metric002}) or (\ref{metric003}).}\label{table}
\end{table}
}
 \end{center}

\end{document}